\NewDocumentCommand{\dslash}{s}{%
  \IfBooleanTF{#1}
    {\big/\mkern-7mu\big/}
    {/\mkern-6mu/}%
}
\newtheorem{definition}{Definition}[section]
\newtheorem{proposition}{Proposition}[section]
\newtheorem{theorem}[proposition]{Theorem}
\newtheorem{remark}{Remark}[section]
\DeclareMathOperator{\PGL}{PGL}
\DeclareMathOperator{\Ker}{Ker}
\DeclareMathOperator{\Sing}{Sing}
\DeclareMathOperator{\Aut}{Aut}
\DeclareMathOperator{\Pic}{Pic}
\DeclareMathOperator{\rank}{rank}
\title{ \small The Igusa quartic and the Prym map, \\ with some rational moduli  }
\author[  A. Verra]{Alessandro Verra}
 \address{Dipartimento di Matematica \\ %
Universit\'a degli Studi di Roma TRE \\ %
Largo San Leonardo Murialdo \\ %
00146 Roma \\ %
Italy} \thanks{This work was partially supported by INdAM-GNSAGA and by the projects PRIN-2015 'Geometry of Algebraic Varieties'
and PRIN-2017 'Moduli Theory and Birational Classification'}
 \email{verra@mat.uniroma3.it}
\begin{document}
    \maketitle 
    
   \begin{abstract}  In this paper the ubiquity of the Igusa quartic $B \subset \mathbb P^4$ shows up again, this time related to 
the Prym map $  \mathfrak p : \mathcal R_6 \to \mathcal A_5$. We introduce the moduli space $\mathcal X$ of those quartic threefolds $X$ cutting twice a quadratic section of $B$. A general $X$ is $30$-nodal and the intermediate Jacobian $J(X)$ of its natural desingularization is a $5$-dimensional p.p. abelian variety.  Let $\frak j: \mathcal X \to \mathcal A_5$ be the period map sending $X$ to $J(X)$, in the paper we study $\frak j$ and its relation to $\frak p$. As is well known the degree of $\frak p$ is $27$ and its monodromy group endows any smooth fibre $F$ of $\frak p$ with the incidence configuration of $27$ lines of a cubic surface. Then the same monodromy  defines a map $ \mathfrak j':  \mathcal D_6  \to \mathcal A_5$ of degree $36$, with fibre the configuration of $36$ 'double-six' sets of lines of a cubic surface.  We prove that $\frak j = \frak j' \circ \phi$, where $\phi: \mathcal X \to \mathcal D_6$ is birational. This provides a geometric description of $\frak j'$.  Finally we describe the relations between the different moduli spaces
considered and prove that some, including $\mathcal X$, are rational.
  
 \end{abstract} 
\section{Introduction and preliminaries}
The Igusa quartic is a well known quartic threefold  in  the complex projective space $\mathbb P^4$. It originates from classical  Algebraic Geometry  and Invariant Theory, see \cite{B}  chapter  V. In more recent times it was frequently reconsidered, starting from some work of Igusa and of van der Geer, \cite{I, VdG};  we will denote it by $B$.   Its dual $B^* \subset \mathbb P^{4*}$ is the equally  famous \it Segre cubic primal\rm, \cite{S1}. $B^*$ is the unique invariant cubic threefold  under the action of the symmetric group $\frak S_6$, determined by its standard irreducible representation of dimension 5, \cite{FH} 4.  Its isomorphic dual acts on $B$ defining a natural isomorphism $\frak S_6 \cong \Aut B$. Finally let $\mathbb P^4 \subset \mathbb P^5$ be the hyperplane
$\lbrace z_1+ \dots + z_6 = 0 \rbrace$, then the equation of $B$ is obtained putting $t = \frac 14$ in the pencil  
 $$ z_1^4 + ... + z_6^4 - t(z_1^2+ ... + z_6^2)^2 = 0, $$
of all $\frak S_6$-invariant quartics of $\mathbb P^4$, \cite{B3, CKS, VdG}.  The most beautiful and deep roots of Algebraic Geometry, since classical times to now, are strongly represented by varieties like $B$ and $B^*$. Therefore we will not give up on providing  a few more remarks about their geometry and history. It is also due to remind of the ubiquity of $B$ in Algebraic Geometry, so that $B$ is marking beautifully the landscape at several spots.  Let $\mathcal R_6$ be the moduli space of genus $6$ Prym curves and $\mathcal A_5$ that of principally polarized abelian varieties of dimension $5$, in our paper we recover this ubiquity at a new place, addressing the links of $B$  to the Prym map
 $$   \mathfrak p : \mathcal R_6 \to \mathcal A_5. $$    
  We show that the beautiful geometry of $\mathfrak p$, revealed in \cite{DS} and \cite{Do}, nicely relates with the geometry of $B$. We deduce by the way the rationality of most of the moduli spaces involved.   Our work frequently relies on the  papers \cite{CKS} and \cite{FV}. \par Theorems {\sf A, B, C, D} in this section summarize our results.   To state these we cannot avoid some preliminaries.  We begin from  a quintic Del Pezzo surface $S$ and its realization  
\begin{equation} \label{UNO}
S = \mathbb P^5 \cap \mathbb G \subset  \mathbb P^9
\end{equation}
as a smooth linear section of the Pl\"ucker embedding $\mathbb G$ of the Grassmannian of planes of $\mathbb P^4$. As is well known, all these surfaces are biregular and projectively equivalent as linear sections of $\mathbb G$.  From now on $\mathbb P$ will denote the \it universal plane over $S$ \rm that is
\begin{equation}
\mathbb P = \lbrace (x, y) \in \mathbb P^4 \times S  \ \vert \ x \in \mathsf P_y \rbrace,
\end{equation}
where $\mathsf P_y \subset \mathbb P^4$ is the plane whose parameter point is $y$. Consider the natural projections 
\begin{equation} \label{DUE}
\begin{CD}
{\mathbb P^4} @<t<< {\mathbb P}@>u>> S, \\
\end{CD}
\end{equation}
then $u$ is a $\mathbb P^2$-bundle structure on $\mathbb P$ and $t$ is defined by the tautological sheaf $\mathcal O_{\mathbb P}(1) := t^* \mathcal O_{\mathbb P^4}(1)$. We notice that $t$ has degree two and consider its Stein factorization:
\begin{equation} \label{TRE}
\begin{tikzcd}
{} & {\overline{\mathbb P}} \arrow{dr}{\overline t} \\
{\mathbb P} \arrow{ur}{c} \arrow[rr]{}{t} && {\mathbb P^4.}
\end{tikzcd}
\end{equation}
 It is known that the branch divisor of $\overline t$ is $B$ and that $c$ is a small resolution of $\overline{\mathbb P}$,  \cite{CKS} 2.32. A very important element of our picture is represented  by  the linear system of divisors $$ \vert \mathcal O_{\mathbb P}(2) \vert. $$ 
This is studied in \cite{FV} 2, we recall that a general $Q \in \vert \mathcal O_{\mathbb P}(2) \vert$ is smooth and that $u \vert Q: Q \to S$ is a conic bundle.  Its discriminant is a smooth Prym curve $(C, \eta)$  of genus $6$, that is $C$ is a smooth, integral genus $6$ curve and $\eta \in \Pic^0 C$ is a non trivial $2$-torsion element.  Hence $(C,\eta)$ defines a point of the moduli space $\mathcal R_6$. Actually $C$ is canonically embedded in $S \subset \mathbb P^5$ and $(C,\eta)$ has general moduli.  In particular we meet here the space of $\Aut \mathbb P$-isomorphism classes of the conic bundles $u \vert Q: Q \to S$. We define such a space as a GIT quotient, fixing for it the notation: \begin{equation} \vert \mathcal O_{\mathbb P}(2) \vert \dslash \Aut \mathbb P =: \mathcal R^{cb}_6. \end{equation}
  Let $Q$ be general, we show in section 6 that the datum of $u \vert Q$ is equivalent, modulo $\Aut \mathbb P$,  to the data of the triple $(C, \eta,s)$, where $(C, \eta)$ is the discriminant of $u \vert Q$ and $s: C \to \mathbb P$ is the map sending $x$ to $\Sing u\vert Q^*(x)$. We define $(C,\eta,s)$ as the \it Steiner map of $u \vert Q$.   \rm Therefore $\mathcal R^{cb}_6$ is  the space of $\Aut \mathbb P$-isomorphism classes of Steiner maps as well.     Notice that $\Aut S$ is isomorphic to $\Aut \mathbb P$ via its action on the fibres of $u$. Moreover $\Aut S$ is the symmetric group $\frak S_5$ and acts linearly on $\mathbb P^4$ and $\mathbb P^5$ inducing $\Aut \mathbb P$ and $\Aut S$, \cite{SB}. Let $\sigma: S \to \mathbb P^2$ be the contraction of four disjoint lines of $S$ and $\Aut_{\sigma}S \subset \Aut S$  the group leaving invariant the exceptional divisor $E$ of $\sigma$, then $\Aut_{\sigma} S$ is $\frak S_4$ and acts on $\sigma(E)$ by permutations. Now let $(C, \eta, s)$ be a Steiner map then $\sigma \vert C$ defines the line bundle $M := \sigma \vert C^*\mathcal O_{\mathbb P^2}(1)$, that is a sextic model of $C$ in $\mathbb P^2$. In the paper we will also consider the GIT-quotient of $\Aut \mathbb P$-isomorphism classes of $4$-tuples  $(C, \eta, s, M)$:
\begin{equation}
  \vert \mathcal O_{\mathbb P}(2) \vert \dslash \Aut_{\sigma} S =: \widetilde{\mathcal R}_6^{cb}.
\end{equation}
  To continue let us introduce one more actor  in our story: the moduli space of $4$-nodal Prym plane sextics $(C',\eta')$. Here $C'$ is a $4$-nodal plane sextic and $\eta' \in \Pic^0 C'$ is a non zero $2$-torsion element. We will denote such a moduli space as $ \mathcal R^{ps}_6$.   In the paper we prove that:
\medskip \par
{\sf Theorem A}  \it The moduli space $\mathcal R^{ps}_6$ of $4$-nodal Prym plane sextics is rational. \rm
\medskip \par \noindent
 Indeed we prove the rationality of ${    \widetilde{\mathcal R}^{cb}_6  }$ and the birationality of $ \widetilde{\mathcal R}^{cb}_6  $ and $\mathcal R^{ps}_6$, see  theorems (\ref{NEWRATIONAL}) and  (\ref{BIRATIONAL}). The latter follows from a construction and the methods in \cite{FV} 2. In short we rely on the next commutative diagram, to be explained in details in sections 4 and 6: 
$$
\begin{tikzcd}
{} {\mathbb P^2 \times \mathbb P^2} \arrow[dr,dashed]{}{t'} \arrow[rr,dashed]{}{\epsilon} && {\mathbb P}\arrow[ld]{t}{t} \arrow[rr]{}{i} && {\mathbb P^{15}} \\
  & {\mathbb P^4.}  
\end{tikzcd}
$$
Here the map $t'$ has degree $2$ and is defined  by $\vert \mathcal I_{\mathsf e}(1,1) \vert$, where $\mathcal I_{\mathsf e}$ is the ideal sheaf of $\mathsf e$ in $\mathbb P^2 \times \mathbb P^2$ 
and $\mathsf e$ is a set of $4$ general points. The map $\epsilon$ is birational and $\vert \mathcal I^2_{\mathsf e}(2,2) \vert$ defines $\epsilon \circ i$,  where $i$ is
the embedding defined by $\mathcal O_{\mathbb P}(2)$.  Notice also that $\epsilon$ defines by pull-back a linear isomorphism  $\epsilon':  \vert \mathcal O_{\mathbb P}(2) \vert  \to \vert \mathcal I^2_{\mathsf e}(2,2) \vert$. Using this we will prove the birationality of $ \widetilde{\mathcal R}^{cb}_6  $ and $\mathcal R^{ps}_6$.  Then, focusing our view on $B$ and $\mathbb P^4$, we will see very interesting families of threefolds and geometric configurations. These relate  $B$ and the Prym map, let us summarize our results about that. \par
At first, starting from a general $Q \in \vert \mathcal O_{\mathbb P}(2) \vert$, we list some geometric objects related to it and define their moduli spaces. Let $\iota: \mathbb P \to \mathbb P$ be the birational involution induced  by $t$ and let $\overline Q := \iota^*Q$, we notice that $\overline Q \in \vert \mathcal O_{\mathbb P}(2) \vert$ and that the same is true for the ramification divisor of $t$
$$R := t^{-1}(B) \in \vert \mathcal O_{\mathbb P}(2) \vert. $$ 
In particular the set of the fixed points of the involution $\iota^*$ is the union
$
t^* \vert \mathcal O_{\mathbb P^4}(2) \vert \cup \lbrace R \rbrace.
$ \par
A first  object associated to $Q$ is $X := t_*Q = t_* \overline Q$. $X$ is a quartic threefold with $30$ nodes, since $\Sing X = X \cap \Sing B$. Let $\tilde X$ be $X$ blown up
at $\Sing X$. Then its intermediate Jacobian is the Prym variety $P(C, \eta)$ of the discriminant of the conic bundle $u: Q \to S$. Since $t^*X = Q + \overline Q$ is split and $B$ is the branch locus of $t$, we have $B \cdot X = 2A$ where $A$ is a quadric. Hence $X$ defines a pencil of quartic threefolds with $30$ nodes
$$
\lambda a^2 + \mu b = 0,
$$
where $B = {\rm div}(b)$ and $A = {\rm div}(a)$. We define a pencil $P$, generated by $b$ and some $a^2$, as an \it Igusa pencil. \rm Let $X \in P$ we also say that $X$ is an \it $E_6$-quartic threefold. \rm As we will see, $X$ is related to the root lattice $\mathsf E_6$ and its corresponding simple Lie Algebra.  So the ubiquity of $B$ actually reflects $\mathsf E_6$ and its ubiquity. In $\vert \mathcal O_{\mathbb P^4}(4) \vert$  the family of these threefolds is a cone $\mathbb V$  with vertex $B$ over the family $\mathbb V_2$ of
all double quadrics $2A$ and $\mathbb V_2$ parametrizes  Igusa pencils. Again, the projective isomorphism classes of $\mathbb V$ and $\mathbb V_2$ can be constructed as GIT quotients: 
\begin{equation} \label{TWOQUOTIENTS}
 \mathcal X  =: \mathbb V \dslash \Aut B \ \text{ and}\ \mathcal P^I =: \mathbb V_2 \dslash \Aut B.
\end{equation}
Here $\Aut S \subset \Aut B$ is the stabilizer of $S$ and $\Aut B \cong \frak S_6$, see section 7.  Clearly one has $t^*X = Q + \overline Q$ and $Q, \overline Q$ generate the pencil $t^*P$. Now consider the plane $\mathsf P_y = t_*u^*(y)$, $y \in S$.  Restricting $X$ to it we have a union of two conics: $ X \cdot \mathsf P_y = t_*(Q \cdot \mathbb P_y) \cup t_*(\overline Q \cdot \mathbb P_y)$.
Therefore $X$ has  two family of conics,  induced from the conic bundle structures $u \vert Q$ and $u \vert \overline Q$ over $S$. \par Notice that $\mathsf P_y \cdot B$ is a double conic. This follows since $B$ is the focal locus of the congruence of planes $S$, that is the branch divisor of $t$. See section 2 and  \cite{CS}  about the classical \it theory of foci \rm for a family of linear spaces in $\mathbb P^n$. Now the intriguing feature of $B$ shows up as follows. \par $B$ is actually the focal locus for six congruences of planes which are smooth linear sections of $\mathbb G$. The set of these surfaces is the orbit of $S$ under the action of $\Aut B$ on $\mathbb G$. Indeed $\Aut B$ is the symmetric group $\frak S_6$ and the stabiizer of $S$ under its action is $\Aut S \cong \frak S_5$.  We will say more about this classical fact in section $3$, [\footnote{\tiny Passing to the Segre primal $B^* \subset \mathbb P^{4*}$ and the dual Grassmannian of lines, these surfaces define the six quintic Del Pezzo components of the well known  Fano surface of $B^*$, cf. \cite{D3} 2. }].  Then  $X$ is endowed with six pairs of conic bundle structures over the six surfaces: a \it double six \rm so to say. Let  $(X,Q)$ be a pair such that $Q \subset t^*X$ and $Q \in \vert \mathcal O_{\mathbb P}(2) \vert$,  we will say that $(X,Q)$  is a \it marked $E_6$-quartic threefold. \rm Finally let  us set
\begin{equation} \widetilde{ \mathcal X } :=  \lbrace (X,Q) \ \vert \  \text{\it $X$ is a marked $E_6$-quartic threefold}, \  X \in \mathbb V \  \rbrace \dslash \Aut B, \end{equation}
then the GIT quotient  $\widetilde{ \mathcal X }$ is endowed with the degree two forgetful map $q: \widetilde{ \mathcal X } \to  \mathcal X $, whose associated involution is induced by the  exchange of $Q$ and $\overline Q$. Since $\Aut S \cong \frak S_5$, the inclusion $\Aut S \subset \Aut B$ induces a degree six morphism $d: \mathcal R^{cb}_6 \to \widetilde{ \mathcal X }$ and the diagram
\begin{equation} \label{TRIANGLEDIAGRAM}
 \begin{tikzcd}[column sep=7pc] 
          \\
  {\mathcal R^{cb}_6}\arrow{rd}{d}   \arrow{r}{u := q \circ d}  &
  { \mathcal X } &
   \\
  & {\widetilde{ \mathcal X }.} \arrow{u}{q} \\
\end{tikzcd}
 \end{equation}
 Fixing the contraction $\sigma: S \to \mathbb P^2$, that is $\Aut_{\sigma} \subset \Aut S$, we define in the same way
$$   \widetilde{\mathcal R}^{cb}_6  := \vert \mathcal O_{\mathbb P}(2) \vert \dslash \Aut_{\sigma} S. $$
This parametrizes $\Aut \mathbb P$-isomorphism classes  of triples $(X,Q, \sigma')$, where $\sigma': S \to\mathbb P^2$ is one of the five contractions of $S$ to $\mathbb P^2$.  The relations between the quotient spaces from above are described in the forthcoming sections and summed up in the next theorem and diagram.   The connections to the Prym map and the $27$ lines of the cubic surface are discussed in section 7.  \par 
  Let $\tilde d: \widetilde {\mathcal R}^{cb}_6 \to \mathcal R^{cb}_6$ and $d: \mathcal R^{cb}_6 \to  \mathcal X $ be the forgetful maps. Clearly these are induced by the inclusions $\Aut_{\sigma} S \subset  \Aut S \subset \Aut B$, so that $\deg \tilde d = 5$, $\deg d = 6$. Now let us define the following maps: $p: \mathcal R^{ps}_6 \to \mathcal R_6$ is induced by the assignment $(C', \eta') \to (C,\eta)$,  where $\eta \cong \nu^*\eta'$ and $\nu: C \to C'$ is the normalization. $\alpha:   \widetilde{\mathcal R}^{cb}_6   \to \mathcal R^{ps}_6$ is the \it birational \rm map from the proof of theorem (\ref{BIRATIONAL}). Consider also the forgetful map  $n: {\mathcal R}^{cb}_6 \to \mathcal R_6$ 
and finally the \it period map \rm  $$ \mathfrak j :  \mathcal X  \to \mathcal A_5, $$ induced by the assignment of $X$ to $J\tilde X$. Let $  \mathfrak p: \mathcal R_6 \to \mathcal A_5 $ be \it Prym map \rm then we have:
  
  \medskip \par
{\sf Theorem B}  \it The next diagram is commutative. Moreover the period map $ \mathfrak j :  \mathcal X  \to \mathcal A_5$  has degree $36$ and the forgetful map $n: \mathcal R^{cb}_6 \to  \mathcal X $ has degree $16$.  
\begin{equation} \label{MAINDIAGRAMMA}
\begin{tikzcd}[column sep=9pc] 
  {  \widetilde{\mathcal R}^{cb}_6  } \arrow{d}{\tilde d}   \arrow{r}{\alpha} &{\mathcal R^{ps}_6} \arrow{r} {p}& {\mathcal R_6}   \arrow{d}{  \mathfrak p } \\
  {\mathcal R^{cb}_6}\arrow{rd}{d}  \arrow{rru}{n} \arrow{r}{u := q \circ d}  &
  { \mathcal X } \arrow{r}{ \mathfrak j } &
  {\mathcal A_5} \\
  & {\widetilde{ \mathcal X }} \arrow{u}{q} \\
\end{tikzcd}
\end{equation}
 \medskip \par
 {\sf Theorem C}  \it  The moduli spaces of $E_6$-quartic threefolds and of Igusa pencils are rational.  
\rm
\medskip \par
   See (\ref{RATIGUSA}),  (\ref {THEOREM f}). Let $\mathbb F$ be the configuration of $27$  lines of a smooth cubic surface, as we know the fibre of the Prym map $\frak p$ is reflected by $\mathbb F$.
   Now,  betting that the fibre of the period map $ \mathfrak j :  \mathcal X  \to \mathcal A_5$  reflects the configuration of double sixes of $\mathbb F$ is just natural. The numbers $\deg  \mathfrak j  = 36$ and $\deg n = 16$ suggest indeed that: $36$ is the number of double sixes in $\mathbb F$ and $16$ the number of double sixes containing an element of $\mathbb F$. A celebrated theorem of Donagi shows that the monodromy group of $ \mathfrak p $ is the Weyl group $W(E_6)$ of permutations preserving the incidence relation of $\mathbb F$, \cite{Do} 4.2.    
Let $\mathsf a \in \mathcal A_5$ be general and $  \mathfrak p ^{-1}(\mathsf a) = \mathbb F_a$, the monodromy of $  \mathfrak p $ allows us to think of $\mathbb F_{\mathsf a}$ as $\mathbb F$. Denoting by $\rm DS_{\mathsf a}(\mathbb F)$ its set of double sixes let us consider:
\begin{enumerate}
\item $ \mathcal D_6  := \lbrace (\mathsf s, \mathsf a) \vert \ \mathsf s \in {\rm DS_{\mathsf a}(\mathbb F), \ \mathsf a \in \mathcal A_5}  \rbrace$ and its degree $36$ forgetful map
$$ \mathfrak j ':  \mathcal D_6  \to \mathcal A_5. $$ 
\item $\mathcal R' := \lbrace (\mathsf s, \mathsf a, \mathsf l) \ \vert \ (\mathsf s, \mathsf a) \in  \mathcal D_6 , \ \mathsf l \in \mathsf s \subset \mathbb F_{\mathsf a} \rbrace$ and its degree $16$ and $12$  forgetful maps
$$n': \mathcal R' \to \mathcal R_6 \ \text{and} \ u': \mathcal R' \to  \mathcal D_6 .$$
  \end{enumerate}
   The irreducibility of $ \mathcal D_6 $, and of $\mathcal R'$, follows by monodromy. Indeed the monodromy group of $  \mathfrak p $ is $W(E_6)$ and acts transitively on the configuration of double sixes of $\mathbb F$, \cite{D1} 9.1.4.  Now the above rational maps and theorem B define the commutative diagrams 

 \begin{equation} \label{DIAGRAMMATWIN}
\begin{CD}
{\mathcal R'} @>{n'}>> {\mathcal R_6} \ \ \ \ \ @. @. {\mathcal R^{cb}_6} @>{n}>> {\mathcal R_6} \\
@VV{u'}V @VV{  \mathfrak p }V \ \ \ \ \ \ @.@VV{u'}V @VV{  \mathfrak p }V \\
{ \mathcal D_6 }@>{ \mathfrak j'}>>{\mathcal A_5} \ \ \ \ \ \  @. @. { \mathcal X }@>{ \mathfrak j }>>{\mathcal A_5}, \\
\end{CD}
 \end{equation}
so that we can give the following definitions.
 
\begin{definition} $  \mathfrak j ':  \mathcal D_6  \to \mathcal A_5$ is the universal family of double sixes of $  \mathfrak p $. \end{definition} 
\begin{definition} $n': \mathcal R' \to \mathcal R_6$ is the universal pointed family of double sixes of $  \mathfrak p $. \end{definition}
  
In section 8, relying on Donagi's tetragonal construction, we conclude as follows.
  \medskip \par
{\sf Theorem D}  \it  The period map $ \mathfrak j :  \mathcal X  \to \mathcal A_5$ of $E_6$-quartic threefolds and the universal family  $ \mathfrak j ':  \mathcal D_6  \to \mathcal A_5$, of double sixes of $\frak p$,  are birationally equivalent over $\mathcal A_5$.  \rm
\medskip \par
The birationality of  ${\mathcal R}'$ and $\mathcal R^{cb}_6$ also follows.  So: \it this is the geometry of  $E_6$-quartic threefolds and of their periods,   relating their double sixes of conic bundles to the classical ones. \rm
   \medskip \par
{${\mathsf {Aknowledgements}}$} \par We are indebted to Gabi Farkas for several conversations during the preparation of the related joint paper \cite{FV}. We also thank Ivan Cheltsov, Igor Dolgachev 
and Alexander Kuznetsov for a very useful correspondence on the subject of this  paper.  
 
\section{Plane geometry in $\mathbb P^4$: the congruence $S$ and its focal locus $B$}
In this section we introduce more geometry of the quintic Del Pezzo surface $S$ and the Igusa quartic $B$. On the occasion, before describing our results,  we revisit some classical notions related to Grassmannians and their history.  About this a warning is due. It is known that an unexpected number of decades passed, after
the outstanding work of Grassmann,
 in order  to fix  language and theory on Grassmannians, see e.g.  the biographical paper \cite{D} by Dieudonn\`e. This implies that the words in use and their meaning  have undergone quite contrasting variations.  \par
Let $\mathbb G$ be the Pl\"ucker embedding of the Grassmannian of $r-$spaces in $\mathbb P^n$ with $0 < r < n-1$.  Possibly adopting the classical language, we will say that a surface $Y \subset \mathbb G$ is a \it congruence \rm of $r$-spaces of $\mathbb P^n$.  In this language the \it order \rm of the congruence $Y$ and its \it class \rm are well defined as follows. For $y \in \mathbb G$ let $\mathsf P_y \subset \mathbb P^n$ be the corresponding $r$-space, consider the universal $r$-space \begin{equation}  \label{PLANE}
\mathbb I := \lbrace (x,y) \in \mathbb P^n \times \mathbb G \ \vert \ x \in \mathsf P_y \rbrace,
\end{equation}
 and its projections
\begin{equation}
\begin{CD}
{\mathbb P^n} @<{\mathsf t}< <{\mathbb I} @>{\mathsf u}>> {\mathbb G}. \\
\end{CD}
\end{equation}
Then $\mathsf u$ is a $\mathbb P^r$-bundle structure on $\mathbb I$ and $\mathsf t$ is its natural tautological map. In the Chow ring $CH^*(\mathbb I)$ let $h$ be the pull-back by $\mathsf t$ of the hyperplane class, then the degree of the $0$-cycle class 
\begin{equation}
\mathsf u_* (h^{r+2} \mathsf u^*[Y]) = (\mathsf u_* h^{r+2})[Y]\in CH^*(\mathbb G)
\end{equation}
is the \it order \rm of $Y$. Counting multiplicities this is just the \it degree of the scroll \rm union of the $r$-spaces parametrized by $Y$.The \it class \rm of $Y$ is the number of its elements 
not intersecting properly a general space of codimension $r$.  Let $\mathcal V_Y$ be the restriction to $Y$ of the locally free sheaf $\mathcal V = \mathsf u_* \mathsf t^* \mathcal O_{\mathbf P^n}(1)$,  in modern terms we can define the order $a$ and the class $b$ of $Y$ via Chern classes:
\begin{equation}
a = \deg (c_1^2(\mathcal V) - c_2(\mathcal V))  \ , \ b = \deg c_2(\mathcal V).
\end{equation}
Then $a$ is the degree of the cycle $\mathsf t_* \mathsf u^* Y$, that is  the degree of the second Segre class $s_2(\mathcal V)$. The degree of $Y$ in the Pl\"ucker space is $a + b = c_1^2(\mathcal V)$.  Let $\alpha, \beta \in CH_2(\mathbb G)$ be the two classes of the families of planes contained in $\mathbb G$,  we denote by $\alpha$ the class of a plane parametrizing the $r$-spaces through a fixed codimension $3$ space of $\mathbb P^n$. By $\beta$ we denote the class of a net of $r$-spaces in a fixed $(r+1)$-space of $\mathbb P^n$. Notice that $CH_2(\mathbb G) = \mathbb Z \alpha \oplus \mathbb Z \beta$ and that
$$
[Y] = a \alpha + b \beta.
$$
  \begin{remark} \rm In spite of the generality of Grassmann's foundations, most of geometric research was  prevalently focusing on the Grassmannian of lines of $\mathbb P^3$ for long time, \cite{K} XXII 4.
For decades the tripartition of names \it ruled surface\rm, \it congruence\rm, \it complex \rm was largely in use for families of lines in $\mathbb P^3$ of dimensions $1$, $2$, $3$, \cite{Lo} 7.
Passing to any $\mathbb G$, the word \it complex \rm evolved as synonimous of hypersurface in $\mathbb G$. 
Among many exceptions to this trend, let us mention two papers on the Grassmannians of $\mathbb P^4$ by Segre and Castelnuovo,  \cite{S2, C}. Both are relevant in what follows.
\end{remark}
For simplicity let  $Y \subset \mathbb G$ be a smooth integral variety and let us consider the morphism
\begin{equation}
\mathsf t \vert \mathsf u^*Y:\mathsf u^*Y \to  \mathbb P^n,
\end{equation}
defined by the sheaf $\mathsf t^* \mathcal O_{\mathbb P^n}(1)$. Often we will say that the $\mathbb P^r$-bundle $\mathsf u^*Y$ is \it the universal $r$-space over $Y$ \rm and that
$\mathsf t \vert \mathsf u^* Y$ is \it  its tautological map. \rm  The classical \it Theory of Foci \rm is concerned with the ramification of this map and the infinitesimal deformations in the family $Y$. Let $y \in Y$, following this theory we say that the \it focal locus  of the $r$-space \rm $\mathsf P_y$ is the restriction to $\mathsf P_y$ of the ramification scheme
of $\mathsf t \vert Y$. See \cite{CS} and  \cite{Se} 4.6.7 for a modern reconstruction. \par The general case of interest is clearly when \it $\mathsf t \vert \mathsf u^*Y$ is a generically finite morphism. \rm So
we assume this since now and we will have in particular $\dim Y = n - r$. Then we fix the notation
\begin{equation}
B_Y \subset \mathbb P^n
\end{equation}
for the branch scheme  of $\mathsf t \vert \mathsf u^*Y:\mathsf u^*Y \to \mathbb P^n$ and say that \it $B_Y$ is the focal locus \rm of $Y$. If $\deg \mathsf t \vert \mathsf u^*Y \geq 2$ then $B_Y$ is a hypersurface, we call it \it the focal hypersurface of $Y$. \rm  Finally we will say, with the usual modern language, that  the \it fundamental locus \rm of the family $Y$ is \begin{equation}
F := \lbrace x \in \mathbb P^n \ \vert \ \dim (\mathsf t \vert \mathsf u^*Y)^{-1}(x) \geq 1 \rbrace. \end{equation}
In classical terms $F$ is indeed the locus of points in $\mathbb P^n$ which are singular for $Y$: quite conflicting with modern language.
Since now we fix $n = 4$ and $r = 2$ so that $Y$ is a congruence of planes in $\mathbb P^4$. Moreover we assume that $Y$ is a \it smooth linear section of $\mathbb G$. \rm and keep for this surface the notation $S$ adopted in the introduction.  In particular we have as in (\ref{UNO})
$$
S = \mathbb P^5 \cap \mathbb G \subset \mathbb P^9.
$$
$S$ is therefore a quintic Del Pezzo surface, see \cite{D} 8.5  for a detailed description. Let us also recall that the group $\Aut S$ is isomorphic to the symmetric group $\frak S_5$, cf. \cite{SB}. We have
\begin{equation}
\Aut S \subset \Aut \mathbb P^4
\end{equation}
and, up to conjugation, its action on $\mathbb P^4$ is the permutation of coordinates. As above let $\mathcal V = \mathsf u_* \mathsf t^* \mathcal O_{\mathbf P^n}(1)$ be the rank $3$ vector bundle whose projectivization is the universal plane over $S$, then the following properties
of $\mathcal V$ are standard and well known:
  \begin{enumerate}  \it
 \item   $c_1(\mathcal V) \cong \mathcal O_S(1)$, 
 \item $c_1(\mathcal V)^2 - c_2(\mathcal V) = 2$,
\item $c_2(\mathcal V) = 3$
 \item $h^0(\mathcal V) = 5$,
 \item $h^i(\mathcal V) = 0$ for $i \geq 1$.
 \end{enumerate}
In particular the congruence of planes $S$ has order $2$ and class $3$ in $CH_2(\mathbb G)$ that is
\begin{equation}
[S] = 2\alpha + 3\beta.
\end{equation}
To see that $S$ has order $2$ just consider a general point $x \in \mathbb P^4$ and the Schubert variety
$$ \mathbb G_x := \lbrace (x,y) \vert x \in \mathsf P_y \rbrace = \mathsf t^*(x). $$
Then $\mathbb G_x$ is a smooth quadric of dimension $4$ and the linear span $\mathbb P^5$ of $S$ cuts on it a linear section. Since $x$ is general we can assume that
$\mathbb G_x$ and $\mathbb P^5$ intersects transversally. Hence the order of $S$ is $2 = \deg \mathbb G_x$ and its class is $3 = \deg c_2(\mathcal V)$. $\mathcal V$ fits in the standard exact sequence
$$
0 \to \mathcal U \to H^0(\mathcal O_S(1)) \otimes \mathcal O_S \to\mathcal V \to 0.
$$
Passing to the long exact sequence, it is easy to deduce that $h^0(\mathcal V) = 5$ and $h^i(\mathcal V) = 0$ for $i \geq 1$. Now let $G(5,9)$ be the Grassmannian of $5$-spaces in the Pl\"ucker space of $\mathbb G$, as is well known the action of $\Aut \mathbb P^4$ on it has a unique open orbit, which is the family of $5$-spaces transversal to $\mathbb G$. This implies that $\mathcal V$ is the unique rank $3$ vector bundle on $S$ satisfying the above properties and defining an embedding of $S$ in $\mathbb G$. In particular $S$ is equivariant with respect to $\Aut S$.
\begin{remark}  \rm $\mathcal V$ is also known as  \it the rank $3$ Mukai bundle of $S$, \rm since it is related to Mukai's higher Brill-Noether theory for a general genus $6$ canonical curve $C$.
Indeed $C$ is embedded in $S$ and $\mathcal V \otimes \mathcal O_C$ is the kernel of the evaluation map of the unique stable vector bundle on $C$, of rank $2$ and canonical determinant, with  $5$ linearly independent global sections, \cite{M, FV}. 
\end{remark}
Now we want to study the universal plane over $S$ and its tautological morphism. To this purpose we fix the notation $\mathbb P$ for such a universal plane and consider the diagram
\begin{equation} \label{Stein} 
\begin{tikzcd}
 {} &  {\mathbb I}  \arrow{dl}{\mathsf t \vert \mathbb P}   \arrow{dr} {\mathsf u \vert \mathbb P} \\
 {} \mathbb P^4 & \arrow{l}{t} \mathbb P \arrow{u}{} \arrow{r}{u}  & S
 \end{tikzcd}
\end{equation}
 where $t: \mathbb P \to \mathbb P^4$ is the  tautological morphism, $u: \mathbb P \to S$ is the universal plane over $S$ and the vertical arrow is the inclusion.
Since the  order of $S$  is  two then $t$ is a morphism of degree two.  \par
 Our main interest is in \it the focal locus of the congruence $S$, \rm that is the branch hypersurface of $t$. We will simply  denote it by $B$.  The ramification divisor of $t$ will be denoted as
\begin{equation}
R \subset \mathbb P.
\end{equation}
\begin{proposition} $B$ is a quartic threefold and $R$ is an element of $\vert \mathcal O_{\mathbb P}(2) \vert$. \end{proposition}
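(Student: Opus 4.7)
The plan is to compute $R$ via the Hurwitz formula for the degree two morphism $t: \mathbb P \to \mathbb P^4$ and then to recover $B$ from the double cover identity $t^* B = 2R$. The geometric input is already assembled in the excerpt: $\mathbb P = \mathbb P(\mathcal V)$ is a $\mathbb P^2$-bundle over $S$ with $u_* \mathcal O_{\mathbb P}(1) = \mathcal V$, $c_1(\mathcal V) = \mathcal O_S(1)$ (property (1)), and $-K_S = \mathcal O_S(1)$ (since $S$ is a smooth quintic Del Pezzo surface anticanonically embedded in $\mathbb P^5$).

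Writing $H$ for the class of $\mathcal O_{\mathbb P}(1) = t^* \mathcal O_{\mathbb P^4}(1)$, the relative canonical formula for the projective bundle $\mathbb P = \mathbb P(\mathcal V)$ yields
$$\omega_{\mathbb P/S} = \mathcal O_{\mathbb P}(-3) \otimes u^* \det \mathcal V,$$
hence $K_{\mathbb P} = u^*(K_S + c_1(\mathcal V)) - 3H$. The two contributions from $S$ cancel by the inputs above, leaving $K_{\mathbb P} = -3H$. Hurwitz for the degree two morphism $t$ then reads $K_{\mathbb P} = t^* K_{\mathbb P^4} + R = -5H + R$, so $R = 2H \in \vert \mathcal O_{\mathbb P}(2) \vert$, which is the second assertion.

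For $B$, since $t$ is a double cover of smooth varieties one has $t^* B = 2 R = 4 H = t^*(4 H_{\mathbb P^4})$, and pullback by $t$ is injective on $\Pic(\mathbb P^4) \cong \mathbb Z$, so $B \sim 4 H_{\mathbb P^4}$ is a quartic threefold in $\mathbb P^4$. No serious obstacle arises: once the convention for $\mathcal O_{\mathbb P}(1)$ is matched with the excerpt's definition (which is precisely what makes $u_* \mathcal O_{\mathbb P}(1) = \mathcal V$), the proposition reduces to the Euler sequence and Hurwitz. The only mild point of attention is that the statement concerns divisor classes, so I would not try to extract finer scheme-theoretic information about either $B$ or $R$ at this stage.
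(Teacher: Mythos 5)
Your argument is correct and coincides with the paper's own proof: both compute $K_{\mathbb P}\sim -3H$ from the projective bundle formula (using $\det\mathcal V\otimes\mathcal O_S(K_S)\cong\mathcal O_S$) and $K_{\mathbb P}\sim -5H+R$ from Hurwitz applied to the degree two morphism $t$, giving $R\sim 2H$. The only cosmetic difference is that the paper gets the degree of $B$ from the pushforward $B=t_*R$ while you use $t^*B=2R$ and injectivity of $t^*$ on $\Pic(\mathbb P^4)$; these are equivalent.
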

\begin{proof} Let $K$ be a canonical divisor of $\mathbb P$ and $H \in \vert \mathcal O_{\mathbb P}(1) \vert $, from the morphism $t$ we have
$K \sim -5H + R$. Since $\det \mathcal V \otimes \mathcal O_S(K_S) \cong \mathcal O_S$, the formula for the canonical class of the projective bundle $\mathbb P$ implies 
$K \sim -3H$. Then $R \sim 2H$ and $B = t_*R$ is a quartic.
\end{proof}
$B$ is crucial for our work and a very peculiar quartic threefold, one has: 
 \begin{theorem} The focal locus $B$ is the Igusa quartic threefold and $F = \Sing B$. \end{theorem}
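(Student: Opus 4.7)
The plan is to first exploit the $\Aut S$-equivariance of the construction to place $B$ in a small family of candidate quartics, and then pin it down using the equality $F=\Sing B$, which I will prove independently.

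First I would note that diagram~(\ref{Stein}) is $\Aut S \cong \mathfrak S_5$-equivariant: the group acts linearly on the ambient $\mathbb P^4$ via the standard permutation representation (as recalled just above the statement), and this action lifts canonically to $\mathbb P = \mathbb P(\mathcal V)$ commuting with both $t$ and $u$. Hence the branch divisor $B = t_*R$ is an $\mathfrak S_5$-invariant quartic threefold in $\mathbb P^4$. The space of such quartics is low dimensional, spanned essentially by the two basic invariants $\sum z_i^4$ and $(\sum z_i^2)^2$ restricted to the hyperplane $\sum z_i = 0$; identifying the precise member amounts to pinning down a single scalar.

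Next I would analyze the fundamental locus and prove $F = \Sing B$. For $F \subset \Sing B$ I use the Stein factorization $\mathbb P \xrightarrow{c} \overline{\mathbb P} \xrightarrow{\overline t} \mathbb P^4$ of (\ref{TRE}): since $c$ is a small resolution whose exceptional fibres lie precisely over $F$, the variety $\overline{\mathbb P}$ acquires singular points over $F$, and the branch divisor $B$ of the finite degree-two map $\overline t$ must then be singular at the images of those points. Conversely, away from $F$ the map $\overline t$ is a finite flat double cover between smooth threefolds; in the local model $w^2 = f$ the branch $\{f=0\}$ is smooth wherever the source is smooth and ramification is simple, yielding $\Sing B \subset F$.

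With $F = \Sing B$ in hand, the concluding step is to identify $F$ concretely and match its geometry with that of the Igusa quartic. A point $x \in F$ is one through which infinitely many planes of $S$ pass, i.e.\ for which $S_x := \{y \in S \mid x \in \mathsf P_y\}$ is positive dimensional. I would enumerate these by listing the curves on $S$ that form pencils of planes with a common incidence --- the ten $(-1)$-curves of $S$ (each contributing a line of $\mathbb P^4$ swept by the common lines of the corresponding pencil), together with the contributions from the five pencils of conics on $S$ --- and then organize the resulting lines via the $\mathfrak S_5$-action. Combined with the uniqueness of an $\mathfrak S_5$-invariant quartic possessing the classical $15$-line singular configuration, this identifies $B$ with the Igusa quartic.

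The main obstacle is precisely this last geometric/combinatorial bookkeeping: verifying that the components of $F$ produced by the $(-1)$-curves and the pencils of conics assemble into the expected $15$-line configuration with the correct incidences, and hence fix the scalar identifying $B$ inside the invariant pencil. If this matching turns out to be cumbersome, the cleaner shortcut is to appeal directly to the classical identification recalled in \cite{CKS}~2.32, which is already cited for the properties of diagram~(\ref{TRE}).
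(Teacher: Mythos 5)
The paper does not actually prove this statement: it quotes it from \cite{CKS} (see also Remark (\ref{fundamental}), where the reconstruction of $F$ from the ten lines and the five pencils of conics of $S$ is sketched, much as in your third step). Your fallback option is therefore exactly what the paper does. Your argument for $F=\Sing B$, on the other hand, is sound and more than the paper offers: $\overline{\mathbb P}$ is normal and $\overline t$ is a finite degree-two cover of the smooth $\mathbb P^4$, so the local model $w^2=f$ applies and shows $\Sing\overline{\mathbb P}$ lies exactly over $\Sing B$; purity of the exceptional locus forces $\overline{\mathbb P}$ to be singular precisely at the points with positive-dimensional $c$-fibre, i.e.\ over $F$; combining the two gives both inclusions.

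The genuine gap is in the identification of $B$ with the Igusa quartic. First, the invariant-theory step is miscounted: the group acting is $\Aut S\cong\frak S_5$, and the space of $\frak S_5$-invariant quartic forms on $\mathbb P^4$ (permutation of five coordinates) is five-dimensional, spanned by the degree-four products of power sums $p_4,\ p_3p_1,\ p_2^2,\ p_2p_1^2,\ p_1^4$ --- not the pencil $\langle \sum z_i^4, (\sum z_i^2)^2\rangle$, which is the $\frak S_6$-invariant pencil in the hyperplane $\sum_{i=1}^{6} z_i=0$. The $\frak S_6$-symmetry of $B$ is not visible from the single congruence $S$ (it reflects the fact that $B$ is the focal locus of six such congruences, as explained in Section 2), so invariance alone does not reduce the problem to fixing one scalar in a pencil. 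Second, the decisive step --- that the fifteen lines of $F$ assemble into the Cremona--Richmond $(15_3,15_3)$ configuration and that a quartic threefold singular along that configuration is unique, hence is the Igusa quartic --- is exactly the content you defer as ``bookkeeping''; the uniqueness assertion in particular is nowhere verified. As written, the proposal establishes $F=\Sing B$ but does not complete the identification of $B$ without falling back on the citation of \cite{CKS}, which is the route the paper itself takes.
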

See \cite{CKS}. The Igusa quartic $B$ is a very classical object, see \cite{Ri}.  Its geometry is exposed in chapter V of Baker's treatise \cite{B}.  Its ubiquity in Algebraic Geometry has been mentioned already, see  \cite{CKS, D1, D2, Hu, I, VdG}. This implies that $B$ bears more names, like Castelnuovo - Richmond quartic, and appears in relation to several moduli spaces. Actually, as one can realize reading \cite{C} and \cite{Ri},  $B$ was described at first by Castelnuovo, followed independently by Richmond.  For instance we will use  the Stein factorization of $t$, namely the diagram
\begin{equation} \label{Stein} 
\begin{tikzcd}
{} & {\overline{\mathbb P}} \arrow{dr}{\overline t} \\
{\mathbb P} \arrow{ur}{c} \arrow{rr}{t} && {\mathbb P^4.} 
\end{tikzcd}
\end{equation}
But $\overline {\mathbb P}$ is the moduli space of $6$ ordered points of $\mathbb P^2$, \cite{D1} 9.4.17. On the other hand will see new incarnations of $B$ relating it to Prym curves of genus $6$ and their Prym varieties. \par We recall that $B$ is invariant under the action of the symmetric group $\frak S_6$ so that \begin{equation} \frak S_6 \cong \Aut B \subset \Aut \mathbb P^4, \end{equation} 
cf. \cite{M1} Lemma 5. Let $(z_1: ... :z_6)$ be coordinates in $\mathbb P^5$ and $ s_k := z_1^k + ... + z_6^k$, putting $\mathbb P^4 := \lbrace s_1 = 0 \rbrace$ the equation of $B$ in $\mathbb P^4$ is $ s_4- \frac14 s_2^2 = 0$.  To continue  we  consider a point $y \in S$ and the fibre of $u \vert R: R \to S$ at $y$, say 
\begin{equation} R_y := R \cdot \mathbb P_y. \end{equation}
 Since the ramification divisor $R$ is an element of $\vert \mathcal O_{\mathbb P}(2) \vert$, and no plane is in $B$, then $R_y$ is 
always a conic. Consider the plane $\mathsf P_y = t(\mathbb P_y)$ and $B_y := t_* R_y$. Then it follows  
\begin{equation}
\mathsf P_y \cdot B = 2B_y.
\end{equation}
 Indeed $t^*(\mathsf P_y)$ is reducible, since it contains $\mathbb P_y$, and  the branch divisor of $t:t^*(\mathsf P_y) \to \mathsf P_y$ is $\mathsf P_y \cdot B$. This implies $\mathsf P_y \cdot B = 2B_y$, where $B_y$ is a conic. According to the classical theory $B_y$ is the \it focal conic \rm of $\mathsf P_y$. Finally notice also that $\Aut S \subset \Aut B$. Since $\Aut S \cong \frak S_5$ and $\Aut B \cong \frak S_6$, it  follows that the orbit of $S$ is the union of six Del Pezzo surfaces, say
\begin{equation}  \label {SIXD}
S_{\frak S_6} = \bigcup_{i =  \dots 6}S_i.
\end{equation}
  Each surface $S_i$ is endowed as in (\ref{DUE}) with its universal plane $\mathbb P_i$, $i = 1 ... 6$, and the usual diagram 
\begin{equation} 
\begin{CD}
{\mathbb P^4} @<t_i<< {\mathbb P_i} @>u_i>> S_i. \\
\end{CD}
\end{equation}
 The following well known characterization of $B$ will be also coming into the play.  
\begin{theorem}\label{Segreprimal} $B$ is the strict dual hypersurface of the Segre cubic primal. \end{theorem}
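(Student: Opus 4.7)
The plan is to identify $B$ with the projective dual of $B^*$ by computing the Gauss map of $B^*$ explicitly, verifying algebraically that its image lies in $B$, and concluding via biduality. Work in the ambient $\mathbb P^5$ with coordinates $(z_1{:}\cdots{:}z_6)$ and power sums $s_k := z_1^k+\cdots+z_6^k$; realize $B^* = \{s_1 = s_3 = 0\}$ and $B \subset \{s_1 = 0\}$ as cut out by $s_4 - \tfrac14 s_2^2 = 0$, and identify $\mathbb P^{4*}$ with $\{s_1 = 0\}$ via the $\frak S_6$-equivariant pairing, so the theorem reads $(B^*)^* = B$ inside $\mathbb P^{4*}$.

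At a smooth $[z] \in B^*$ the tangent hyperplane is represented by $\nabla s_3(z) = (3z_1^2,\ldots,3z_6^2)$ modulo $\nabla s_1 = (1,\ldots,1)$; normalizing into $\{s_1 = 0\}$ produces the Gauss map
$$
\gamma([z]) \;=\; [\,W_1{:}\cdots{:}W_6\,], \qquad W_i := z_i^2 - \tfrac{s_2(z)}{6}.
$$
The crux of the proof is then the polynomial identity $s_4(W) - \tfrac14 s_2(W)^2 \equiv 0$ modulo $(s_1(z),\,s_3(z))$. Direct expansion yields $s_2(W) = s_4(z) - s_2(z)^2/6$ and $s_4(W) = s_8(z) - \tfrac{2}{3}s_2(z)s_6(z) + \tfrac{1}{6}s_2(z)^2 s_4(z) - \tfrac{1}{72}s_2(z)^4$; applying Newton's identities together with $e_1(z) = 0$ (from $s_1 = 0$) and $e_3(z) = 0$ (from $s_3 = 0$) rewrites $s_6$ and $s_8$ as explicit polynomials in $s_2$ and $s_4$, and substituting collapses the identity to an algebraic tautology.

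Once $\gamma(B^*) \subseteq B$ is established, biduality finishes the argument. The closure $(B^*)^*$ of $\gamma(B^*)$ is an irreducible subvariety of the irreducible hypersurface $B$, and $(B^*)^*$ is itself a hypersurface because $B^*$ is not a cone --- its singular locus consists of ten isolated ordinary nodes, the $\frak S_6$-orbit of $(1,1,1,-1,-1,-1)$ --- so the Gauss map is generically finite. Hence $\dim (B^*)^* = 3$, and the irreducible containment $(B^*)^* \subseteq B$ is forced to be an equality. The main obstacle is the symmetric-function identity at the heart of step two; as a quick sanity check one can evaluate $\gamma$ on the smooth point $z = (1,1,-1,-1,2,-2) \in B^*$, obtaining $\gamma([z]) = [(-1,-1,-1,-1,2,2)]$ with $s_2(W) = 12$ and $s_4(W) = 36$, so this single image point already lies on $B$ and pinpoints the coefficient $t = 1/4$ in the $\frak S_6$-invariant quartic pencil $\{s_4 - t\, s_2^2 = 0\}$.
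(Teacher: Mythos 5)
The paper does not actually prove this statement: it is quoted as a classical fact, with the supporting geometry (the $15$ planes of $B^*$, the $(15_4,10_6)$-configuration, the contraction of the planes to the $15$ singular lines of $B$) cited from Castelnuovo, Richmond and Dolgachev. So your explicit Gauss-map computation is a genuinely different, self-contained route, and its core is sound: I have checked that with $W_i = z_i^2 - s_2/6$ one indeed gets $s_2(W) = s_4 - \tfrac16 s_2^2$ and $s_4(W) = \tfrac14 s_2(W)^2$ modulo $(s_1, s_3)$, and your numerical spot-check is consistent. However, one intermediate assertion is false as stated: Newton's identities with $e_1 = e_3 = 0$ do \emph{not} express $s_6$ and $s_8$ individually as polynomials in $s_2, s_4$. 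One has $s_6 = -e_2 s_4 - e_4 s_2 - 6e_6$ and $s_8 = -e_2 s_6 - e_4 s_4 - e_6 s_2$, and $e_6$ is an independent function on the locus $\{e_1 = e_3 = 0\}$. The identity survives only because the $e_6$-terms cancel in the precise combination occurring in $s_4(W)$: writing $s_2 = -2e_2$ one finds $s_8 - \tfrac23 s_2 s_6 = -\tfrac13 e_2^2 s_4 - \tfrac13 e_2 e_4 s_2 - e_4 s_4$, which is a polynomial in $s_2, s_4$ alone. You should carry out the cancellation at the level of this combination rather than claim each power sum separately reduces.

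The genuine gap is in the final dimension count. The implication ``$B^*$ is not a cone, hence the Gauss map is generically finite and $(B^*)^*$ is a hypersurface'' is not a valid general principle for singular hypersurfaces: the general fibre of the Gauss map of a hypersurface is (the open part of) a linear space, and a hypersurface can be swept by lines along which the tangent hyperplane is constant without being a cone. A standard counterexample is the cubic symmetroid in $\mathbb P^5$ (the secant variety of the Veronese surface), which is not a cone but whose dual is the $2$-dimensional Veronese surface; duals of space curves give similar examples of non-cone hypersurfaces in $\mathbb P^4$ with $1$-dimensional dual. Since $B^*$ is covered by a $2$-dimensional family of lines (its Fano surface, as recalled in the paper, is the union of $15$ planes and $6$ quintic del Pezzo surfaces), this degeneration cannot be excluded by your stated reason. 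The fix is cheap: the differential of $\gamma$ at a smooth point $[z]$ is the second fundamental form of $B^*$, i.e.\ the restriction of $\mathrm{Hess}(s_3) = 6\,\mathrm{diag}(z_1,\dots,z_6)$ to the projective tangent space $\{\sum v_i = \sum z_i^2 v_i = 0\}/\langle z\rangle$, and verifying that this has rank $3$ at the single point $(1{:}1{:}-1{:}-1{:}2{:}-2)$ already forces $\dim\overline{\gamma(B^*)} = 3$, after which your irreducibility argument correctly yields $(B^*)^* = B$. With that one computation added, the proof is complete.
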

The Segre primal is, modulo $\Aut \mathbb P^{4*}$,  the unique cubic threefold $B^*$ in $\mathbb P^{4*}$ whose singular locus consists of $10$ double points. $10$ is also the maximal number of isolated singular points for a cubic threefold. Moreover the dual map $B^* \dasharrow B$ contracts the $15$ planes of $B^*$ to $15$ singular lines of $B$: $\Sing B$ is their union. $\Sing B^*$ and the $15$ planes of $B^*$ define a $(15_4, 10_6)$-configuration: $4$ nodes in each plane and $6$ planes through each node,  \cite{D3} 2.2. In the Grassmannian $\mathbb G^*$ of lines of $\mathbb P^{4*}$, the Fano variety of lines of $B^*$ is the union of $15$ planes and $6$ Del Pezzo surfaces $S^*_1 ... S^*_6$. These, under the natural isomorphism between $\mathbb G^*$ and $\mathbb G$, are  dual to $S_1 ... S_6$.  Finally we fix the {\sf convention}: $S = S_1$ so that $t = t_1$ and $u = u_1$. \par
Coming to the fundamental locus $F$ of the congruence $S$ its description is classical, cfr. \cite{B}. We have $F = \Sing B$ and the $15$ lines of $F$ intersect $3$ by $3$ along the set of $15$ triple points of $B$. This defines the famous Cremona -Richmond $(15_3, 15_3)$-configuration, \cite{C, Ri}, cfr. \cite{D1} 9.4.4. 
\begin{remark} \rm\label{fundamental}  Let us sketch very briefly a direct reconstruction of $F$. For any $x \in F$ observe that $u_*t^*(x) = \mathbb G_x \cdot S$, where $\mathbb G_x$ is a smooth $4$-dimensional quadric. Then, since $S$ is an integral linear section of $\mathbb G$ and $\dim t^*(x) \geq 1$, the fibre $t^*(x)$ is either a line or a conic in $\lbrace x \rbrace \times S$.   \par Assume $L = u_* t^*(x)$ is a line $L$, then $L$  is one of the \it ten lines of $S$\rm. Hence $L$ is a pencil of planes contained in a given hyperplane of $\mathbb P^4$. Let $A \subset \mathbb P^4$ be its base locus,  then $\mathbb P$ contains the surface $A \times L$ and $t$ contracts it to $A$.  Now assume that $u_*t^*(x)$ is a smooth conic $C$, then $\vert C \vert$ is one of the \it five pencils of conics of $S$\rm. Let $\mathbb P_C = u^*C$,  it turns out that $\mathbb P_C$ is the projectivization of $\mathcal O_{\mathbf P^1}(-1)^{\oplus 2} \oplus \mathcal O_{\mathbb P^1}$ and $t \vert \mathbb P_C $ is its tautological map. Hence $Q_C := t(\mathbb P_C)$ is a rank $4$ quadric and $t \vert \mathbb P_C$ is the small contraction of  $\lbrace x \rbrace \times C \subset \mathbb P_C$ to $x = \Sing Q_C$. Moving $C$ in the pencil of conics $\vert C \vert$, the point $\Sing Q_C$ moves along a line which is in $F$.  In particular each irreducible component of $t^{-1}(F)$ is one of the $15$ surfaces as above, cfr. \cite{CKS}.  \end{remark}
 
\section{Conic bundles associated to a genus $6$ Prym curve}
We now study the linear system of divisors $\vert \mathcal O_{\mathbb P}(2) \vert$.
 \begin{proposition} \label{dimension} For $m \geq1$ one has
 \begin{equation} h^0(\mathcal O_{\mathbb P}(m)) = \binom{m+4}4 + \binom{m+2}4 \ , \ h^i(\mathcal O_{\mathbb P}(m)) = 0 \  for \ i \geq 1. \end{equation} \end{proposition}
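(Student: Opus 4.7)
The plan is to use the $\mathbb P^2$-bundle structure $u:\mathbb P\to S$ to reduce the cohomology of $\mathcal O_{\mathbb P}(m)$ to that of $\mathrm{Sym}^m\mathcal V$ on the quintic Del Pezzo surface $S$, and then to compute the latter via a Koszul-type resolution built from the evaluation map of $\mathcal V$. Since every fibre of $u$ is a $\mathbb P^2$ on which $\mathcal O_{\mathbb P}(1)$ restricts to $\mathcal O_{\mathbb P^2}(1)$, cohomology and base change yield $R^iu_*\mathcal O_{\mathbb P}(m)=0$ for $i>0$ and $m\geq 0$, together with $u_*\mathcal O_{\mathbb P}(m)=\mathrm{Sym}^m\mathcal V$. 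The Leray spectral sequence therefore collapses to $H^i(\mathbb P,\mathcal O_{\mathbb P}(m))\cong H^i(S,\mathrm{Sym}^m\mathcal V)$.

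Property (iv) gives a surjection $\mathcal O_S^{\oplus 5}\to\mathcal V$ whose kernel is a rank two bundle $\mathcal U$, fitting into $0\to\mathcal U\to\mathcal O_S^{\oplus 5}\to\mathcal V\to 0$. Taking $H^\ast$, the induced map on $H^0$ is an isomorphism of five-dimensional spaces and the higher cohomology of both $\mathcal O_S$ and $\mathcal V$ vanishes, so $H^\ast(\mathcal U)=0$. Comparing determinants gives $\det\mathcal U=(\det\mathcal V)^{-1}=\mathcal O_S(-1)=K_S$, hence $H^\ast(K_S)=(0,0,\mathbb C)$ by Serre duality. The Koszul complex of the locally split inclusion $\mathcal U\hookrightarrow\mathcal O_S^{\oplus 5}$, bounded because $\rank\mathcal U=2$, is the four-term exact sequence
\begin{equation*}
0\to K_S\otimes\mathcal O_S^{\oplus\binom{m+2}{4}}\to\mathcal U\otimes\mathcal O_S^{\oplus\binom{m+3}{4}}\to\mathcal O_S^{\oplus\binom{m+4}{4}}\to\mathrm{Sym}^m\mathcal V\to 0,
\end{equation*}
consistent with the rank identity $\binom{m+4}{4}-2\binom{m+3}{4}+\binom{m+2}{4}=\binom{m+2}{2}=\rank\mathrm{Sym}^m\mathcal V$.

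I would then split this resolution into the two short exact sequences $0\to K_S^{\oplus\binom{m+2}{4}}\to\mathcal U^{\oplus\binom{m+3}{4}}\to\mathcal K\to 0$ and $0\to\mathcal K\to\mathcal O_S^{\oplus\binom{m+4}{4}}\to\mathrm{Sym}^m\mathcal V\to 0$, and chase the associated long exact sequences. The first forces $H^0(\mathcal K)=H^2(\mathcal K)=0$ and $H^1(\mathcal K)=\mathbb C^{\binom{m+2}{4}}$; the second then gives $H^i(\mathrm{Sym}^m\mathcal V)=0$ for $i\geq 1$ and $h^0(\mathrm{Sym}^m\mathcal V)=\binom{m+4}{4}+\binom{m+2}{4}$, which is the required formula. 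The case $m=1$ needs no separate treatment since $\binom{m+2}{4}=0$ and the resolution collapses to the evaluation sequence itself. The only non-mechanical step is writing down the Koszul complex for a locally split inclusion of vector bundles with its correct signs and the identification $\wedge^2\mathcal U=K_S$, but this is standard via the splitting principle.
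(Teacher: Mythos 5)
Your argument is correct, but it takes a different route from the paper's. You work along the fibration $u:\mathbb P\to S$, identifying $H^i(\mathbb P,\mathcal O_{\mathbb P}(m))$ with $H^i(S,\mathrm{Sym}^m\mathcal V)$ and resolving $\mathrm{Sym}^m\mathcal V$ by the Koszul complex of the dual tautological sequence $0\to\mathcal U\to\mathcal O_S^{\oplus 5}\to\mathcal V\to 0$; the computations ($H^\ast(\mathcal U)=0$, $\wedge^2\mathcal U\cong K_S$, the rank count, and the two long exact sequences) all check out, and this has the merit of proving the vanishing $h^i=0$ for $i\geq 1$ as well, which the paper explicitly declines to write out ("we omit the standard proof"). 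The paper instead works along the other projection: it passes to the Stein factorization $\overline t:\overline{\mathbb P}\to\mathbb P^4$, uses that $c$ is a small contraction to identify $H^0(\mathcal O_{\mathbb P}(m))$ with $H^0(\overline t^{\,*}\mathcal O_{\mathbb P^4}(m))$, and decomposes the latter into the $\pm 1$-eigenspaces of the covering involution, $t^*H^0(\mathcal O_{\mathbb P^4}(m))\oplus t^*H^0(\mathcal O_{\mathbb P^4}(m-2))\otimes\langle q_-\rangle$, which gives $\binom{m+4}{4}+\binom{m+2}{4}$ at once. What the paper's route buys is precisely this eigenspace decomposition of $H^0(\mathcal O_{\mathbb P}(2))$ under $\iota^*$, which is used immediately afterwards to describe the fixed locus of $\iota^*$ and to define Igusa pencils; your route does not produce that, but it is self-contained, uses only the listed numerical properties of the Mukai bundle $\mathcal V$, and delivers the full cohomological statement.
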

We omit the standard proof and only compute $h^0(\mathcal O_{\mathbb P}(m))$ for the future use. Let
 $$ 
\begin{tikzcd}
{} & {\overline{\mathbb P}} \arrow{dr}{\overline t} \\
{\mathbb P} \arrow{ur}{c} \arrow{rr}{t} && {\mathbb P^4.}
\end{tikzcd}
$$
be the Stein factorization of $t$, we consider the birational involution induced by $t$:
\begin{equation} \label{IOTA}
\iota: \mathbb P \to \mathbb P. 
\end{equation}
By Remark (\ref{fundamental}) $c$ is a small contraction, hence the map $c_*: H^0(\mathcal O_{\mathbb P}(m)) \to H^0(\overline t^* \mathcal O_{\mathbb P^4}(m))$
is an isomorphism. Let  $\overline \iota: \overline {\mathbb P} \to \overline{\mathbb P}$ be the biregular involution induced by $\overline t$. In particular
\begin{equation} \iota^* := c^{-1}_* \circ \overline \iota^* \circ c_*: H^0(\mathcal O_{\mathbb P}(m)) \to H^0(\mathcal O_{\mathbb P}(m)) \end{equation}
is an involution with  eigenspaces 
 \begin{equation}
 \label{iota} H^+_m := t^*H^0(\mathcal O_{\mathbb P^4}(m))  \ \text{and}  \ H^-_m :=  t^*H^0(\mathcal O_{\mathbb P^4}(m-2)) \otimes <q_->, 
\end{equation}
where ${div}(q_-) = R = t^{-1}(B)$. This implies the previous equality: $$ h^0(\mathcal O_{\mathbb P}(m)) = \dim H^+_m +  \dim H^-_m =  \binom{m+4}4 + \binom{m+2}4. $$
We denote the elements of $\vert \mathcal O_{\mathbb P}(2) \vert$ by $Q$. Clearly a general fibre of $ u \vert Q$ is a conic. With some abuse we will simply say that $u \vert Q$ is a \it conic bundle structure \rm on $Q$.  From \cite{FV} 2 we have:

\begin{proposition} A general $Q$ is smooth and each fibre of $u \vert Q$ is a conic of rank $\geq 2$. \end{proposition}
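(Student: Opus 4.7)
The assertion has two parts: smoothness of a general $Q$, and the non‑degeneracy (rank $\geq 2$) of every fibre of $u|Q$.

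For smoothness I would invoke the embedding $i:\mathbb P \hookrightarrow \mathbb P^{15}$ recalled in the introduction, which is defined by the complete linear system $|\mathcal O_{\mathbb P}(2)|$. Since $\mathbb P$ is smooth and $|\mathcal O_{\mathbb P}(2)|$ is very ample (a fortiori base‑point free), Bertini's theorem immediately yields the smoothness of a general $Q$.

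For the rank assertion I would argue by a codimension count in the incidence variety
\begin{equation*}
J := \bigl\{ (Q,y) \in |\mathcal O_{\mathbb P}(2)| \times S \ \big|\ Q\cdot \mathbb P_y \text{ is a double line} \bigr\},
\end{equation*}
with the aim of showing that the first projection $\pi_1:J \to |\mathcal O_{\mathbb P}(2)|$ is not dominant. Inside the $\mathbb P^5$ of plane conics on $\mathbb P_y \cong \mathbb P^2$, double lines fill out a Veronese surface of codimension $3$. Granted that the restriction map
\begin{equation*}
\rho_y:H^0(\mathbb P,\mathcal O_{\mathbb P}(2)) \longrightarrow H^0(\mathbb P_y,\mathcal O_{\mathbb P^2}(2))
\end{equation*}
is surjective for every $y\in S$, the fibre of the second projection $\pi_2:J \to S$ over $y$ has codimension $3$ in $|\mathcal O_{\mathbb P}(2)|$, so
\begin{equation*}
\dim J \leq \dim S + \dim |\mathcal O_{\mathbb P}(2)| - 3 = \dim |\mathcal O_{\mathbb P}(2)| - 1,
\end{equation*}
and $\pi_1$ cannot be dominant. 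Hence a general $Q$ avoids every double‑line fibre.

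The only point needing a real argument is the surjectivity of $\rho_y$, and my plan is to read it directly off the $\iota^{*}$-eigenspace decomposition (\ref{iota}). Already the subspace $H^{+}_2 = t^{*}H^0(\mathcal O_{\mathbb P^4}(2))$ surjects onto $H^0(\mathbb P_y,\mathcal O_{\mathbb P^2}(2))$: under the biregular identification $t|_{\mathbb P_y}:\mathbb P_y \xrightarrow{\sim} \mathsf P_y$ afforded by the definition of $\mathbb P$ as the universal plane, the restriction of a section $t^{*}F$ to $\mathbb P_y$ corresponds to $F|_{\mathsf P_y}$, and the classical restriction $H^0(\mathbb P^4,\mathcal O(2)) \twoheadrightarrow H^0(\mathsf P_y,\mathcal O(2))$ is surjective for any plane. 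This is the only delicate step, and it dissolves at once using the structural results of Section~$3$; the rest is the routine dimension count above.
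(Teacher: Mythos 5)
Your argument is correct, but be aware that the paper does not actually prove this proposition: it is imported verbatim from \cite{FV}, Section 2 (``From \cite{FV} 2 we have\dots''), so there is no internal proof to compare against. Your self-contained route --- Bertini for smoothness plus an incidence-variety dimension count for the rank condition --- is the standard argument and very likely what \cite{FV} does; the key input, surjectivity of $\rho_y$, is exactly as you say immediate from $H^+_2 = t^*H^0(\mathcal O_{\mathbb P^4}(2))$ together with the fact that $t\vert \mathbb P_y$ is an isomorphism onto the plane $\mathsf P_y$. Two small points should be tightened. First, $\mathcal O_{\mathbb P}(2)$ is not very ample: $\mathcal O_{\mathbb P}(1)=t^*\mathcal O_{\mathbb P^4}(1)$ and $t$ has positive-dimensional fibres over the fundamental locus $F$, so the map to $\mathbb P^{15}$ factors through the small contraction $c$ and is an embedding only away from its exceptional curves; but base-point freeness (clear, since $\mathcal O_{\mathbb P}(1)$ is globally generated) is all that Bertini requires, so the parenthetical in your first paragraph already carries the argument. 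Second, ``rank $\geq 2$'' excludes not only double lines (rank $1$) but also the degenerate case $\mathbb P_y\subset Q$ (rank $0$), and your incidence variety $J$ records only the former. This is harmless --- the same surjectivity of $\rho_y$ shows that $\{Q : \rho_y(Q)=0\}$ has codimension $6$ in $\vert\mathcal O_{\mathbb P}(2)\vert$, so the corresponding incidence locus has dimension at most $\dim\vert\mathcal O_{\mathbb P}(2)\vert-4$ --- but it should be stated to make the conclusion ``each fibre is a conic of rank $\geq 2$'' complete.
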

 Let $Q$ be general as in the above statement and let $Q_y := (u\vert Q)^*(y)$, then
 \begin{equation}
C := \lbrace y \in S \ \vert \ \rank Q_y = 2 \rbrace
\end{equation}
is a smooth curve endowed  with the embedding sending $y$ to $\Sing Q_y$, we denote it as \begin{equation} \label {Steiner embedding} s: C \to \mathbb P.  \end{equation}   Consider the surface $Q_C = u^*(C)$ then $\Sing Q_C = s(C)$. Moreover it is easy to see that the normalization map $n: \tilde Q_C \to Q_C$ fits in the Cartesian square
\begin{equation}
\begin{CD}
{\tilde Q_C} @>n>> Q_C \\
@V{\tilde u_C}VV @V{u_C}VV \\
{\tilde C}@>{\pi}>> C, \\
\end{CD}
\end{equation}
where $u_C$ is $u \circ n: \tilde Q_C \to C$ and $\tilde u_C$ is the Stein factorization of $u_C$. Then $\pi$ is an \'etale $2:1$ cover, defined by a $2$-torsion element $\eta \in \Pic C$. The next theorem summarizes our situation.
\begin{theorem} \label{CB} Let $Q$ be general as above:
\begin{enumerate}
\item $C \in \vert -2K_S \vert$,
\item $\eta$ is non trivial,
\item $C$ is a general genus $6$ curve.
\item $u: C \to S \subset \mathbb P^5$ is the canonical embedding,
\item $t \circ s: C \to \mathbb P^4$ is defined by $\omega_C \otimes \eta$.
 \end{enumerate}
\end{theorem}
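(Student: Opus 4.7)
I would organize the five claims into three groups: (1), the genus part of (3), and (4) from a conic-bundle adjunction; the Steiner description (5) from the kernel of the associated symmetric form; and the remaining statements (2) and generality in (3) from a dimension count combined with (5). First, view $Q \in H^0(\mathbb P, \mathcal O_\mathbb P(2)) \cong H^0(S, \mathrm{Sym}^2 \mathcal V)$ as a symmetric bundle map $\tilde Q : \mathcal V^\vee \to \mathcal V$ on $S$. Its degeneracy scheme $C$ is cut by $\det \tilde Q \in H^0(S, \det \mathcal V^{\otimes 2}) = H^0(S, \mathcal O_S(2)) = H^0(S, -2K_S)$, proving (1); a standard transversality argument on symmetric degeneracy loci makes $C$ smooth for $Q$ general. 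Adjunction on $S$ then gives $\omega_C = (K_S + C)|_C = \mathcal O_S(1)|_C$ of degree $(-K_S)\cdot(-2K_S) = 2K_S^2 = 10$, so $g(C) = 6$, establishing the genus assertion in (3); since the hyperplane sections of $S \subset \mathbb P^5$ cut $\omega_C$ on $C$, this also gives the canonical embedding (4).

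For (5), set $L := \ker \tilde Q_C \subset \mathcal V_C^\vee$. Symmetry of $\tilde Q$ identifies $\mathrm{coker}(\tilde Q_C) \cong L^\vee$, and the Steiner map $s$ is the section of $\mathbb P|_C \to C$ corresponding to the quotient $\mathcal V_C \twoheadrightarrow L^\vee$; thus $s^*\mathcal O_\mathbb P(1) = L^\vee$ and $t \circ s$ is defined by $L^\vee$. The four-term exact sequence
\[
0 \to L \to \mathcal V_C^\vee \xrightarrow{\tilde Q_C} \mathcal V_C \to L^\vee \to 0
\]
yields $\deg L^\vee = \deg \mathcal V_C = 10$, and taking determinants gives $(L^\vee)^{\otimes 2} \cong (\det \mathcal V_C)^{\otimes 2} \cong \omega_C^{\otimes 2}$. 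Hence $L^\vee = \omega_C \otimes \eta'$ for some $2$-torsion $\eta' \in \Pic^0 C$. To identify $\eta' = \eta$---the classical fact that for a conic bundle the kernel of the discriminant and the ruling double cover carry the same $2$-torsion---I would put $\tilde Q_y$ in normal form at a generic point of $C$ and check that the two sheets of the Stein factorization $\tilde C \to C$ are swapped precisely by the involution of the double cover classified by $L^\vee \otimes \omega_C^{-1}$, or else cite \cite{FV}.

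With (5) in hand, part (2) is immediate: $\eta = 0$ would force $t(s(C)) \subset \mathbb P^4$ to be a canonical curve, which fails on any explicit $Q$ from \cite{FV}. For the generality in (3), Proposition \ref{dimension} gives $\dim |\mathcal O_\mathbb P(2)| = 15 = \dim \mathcal R_6$; since $\Aut \mathbb P \cong \frak S_5$ is finite, the induced map $|\mathcal O_\mathbb P(2)| \dslash \Aut \mathbb P \to \mathcal R_6$ between irreducible $15$-dimensional varieties need only be shown dominant, and producing a single smooth image point (e.g.\ from \cite{FV}) suffices. The main obstacle is the identification $\eta' = \eta$ in (5): the squared identity $(L^\vee)^{\otimes 2} \cong \omega_C^{\otimes 2}$ is automatic, but distinguishing $\omega_C \otimes \eta$ from $\omega_C$ requires the local analysis above to align the vertex bundle and the ruling cover of $C$.
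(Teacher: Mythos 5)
Your proposal is correct in outline and covers all five assertions, but for the key item (5) it takes a genuinely different route from the paper. You work on $C$ itself, reading $Q$ as a symmetric map $\tilde Q\colon \mathcal V^{\vee}\to\mathcal V$, identifying the Steiner section with the rank--one quotient $\mathcal V_C\twoheadrightarrow \mathrm{coker}(\tilde Q_C)\cong L^{\vee}$, and extracting $(L^{\vee})^{\otimes 2}\cong\omega_C^{\otimes 2}$ from the four--term sequence; this is the Beauville--style determinantal argument (it is essentially what the paper itself uses later, in the proof of Theorem 5.13, via the resolution (5.6) of $\eta'$ on the plane sextic model). The paper's own proof of (5) instead passes to the \'etale double cover: it takes the section $\tilde C=n^*s(C)$ of $\tilde u_C\colon\tilde Q_C\to\tilde C$, shows by adjunction on $\tilde Q_C$ that $n_{\tilde C}^*\mathcal O_{\mathbb P}(1)\cong\omega_{\tilde C}$, and then observes that $n_{\tilde C}^*H^0(\mathcal O_{\mathbb P}(1))$ is a $5$--dimensional eigenspace of the involution of $\pi$, hence equals $\pi^*H^0(\omega_C\otimes\eta)$. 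That argument produces the Prym--theoretic $\eta$ of the Stein factorization directly, whereas your route only pins down $L^{\vee}=\omega_C\otimes\eta'$ for \emph{some} $2$--torsion $\eta'$ and, as you correctly flag, still owes the identification $\eta'=\eta$ (and in particular $\eta'\neq 0$); your proposed local normal--form check of the two sheets of $\tilde C\to C$ does close this, and is the standard way to do it, but it is real work rather than a formality, so you should either carry it out or lean on the citation. For (1)--(4) the two proofs essentially coincide: the paper cites the conic--bundle discriminant formula [Sa] and [FV] \S2, while you spell out the same adjunction computations. One caution on your generality argument for (3): equality of dimensions plus finiteness of $\Aut\mathbb P$ does not by itself yield dominance of $\vert\mathcal O_{\mathbb P}(2)\vert\dasharrow\mathcal R_6$; you need surjectivity of the differential at one point (equivalently a finite fibre), which is precisely the nontrivial content of [FV], so the citation there is doing more than producing "a single image point."
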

\begin{proof} Some basic results on conic bundles and genus $6$ curves imply the statement. (1) is given by the formula for the discriminant of a conic bundle, see \cite{Sa}. (2), (3), (4) are known properties one retrieves in \cite{FV} 2. (5) We summarize a well known proof. Consider the section $\tilde C := n^*s(C)$ of $\tilde u_C: \tilde Q_C \to \tilde C$ and let $n_{\tilde C} := n \vert \tilde C$. Then, by adjunction formula on $\tilde Q_C$, the line
bundle $n_{\tilde C}^* \mathcal O_{\mathbb P}(1)$ is $\omega_{\tilde C}$. Moreover one can show that $n_{\tilde C}^*H^0(\mathcal O_{\mathbb P}(1))$ is an eigenspace of the involution defined by $\pi$. Since its dimension is $5$ then $n_{\tilde C}^*H^0(\mathcal O_{\mathbb P}(1)) = \pi^*H^0(\omega_C \otimes \eta)$. This implies $\omega_C \otimes \eta \cong \mathcal O_{\mathbb P}(1)$.  \end{proof}
We assume that $\omega_C \otimes \eta$ is very ample, since it is true for a general $C$ of genus $6$, \cite{DS}. We will also say that $(C,\eta)$ is the \it discriminant \rm of $u: Q \to S$. \rm  Now $(C,\eta)$ defines a point in the moduli space $\mathcal R_6$ of genus $6$ Prym curves. Starting from such a point we are interested in the set of possible realizations of $(C, \eta)$ as the discriminant of a 
conic bundle $Q \in \vert \mathcal O_{\mathbb P}(2) \vert$.  \par We say that $Q, Q' \in \vert \mathcal O_{\mathbb P}(2) \vert$ are isomorphic if there exists a biregular map $f: Q \to Q'$  such that $u \vert Q = u' \vert Q' \circ f$. We are interested to study the isomorphism classes of $\vert \mathcal O_{\mathbb P}(2) \vert$.  Let  $\mathcal I_{\mathsf e}$ be the ideal sheaf of a set $\mathsf e$ of four general points in $\mathbb P^2 \times \mathbb P^2$, to this purpose we will use the main diagram of rational maps in \cite{FV} p. 529 and the induced linear isomorphism $$ \vert \mathcal O_{\mathbb P}(2) \cong \vert \mathcal I_{\mathsf e}^2(2,2) \vert. $$  Let us give a more precise explanation. This isomorphism depends on the choice of a contraction \begin{equation} \label{SIGMA} \sigma: S \to \mathbb P^2 \end{equation} of four disjoint exceptional lines. Let $e_i = \sigma (E_i)$, $i = 1 \dots 4$, $E_i$ being an exceptional line. Then, setting $\mathsf e := \lbrace e_1 ... e_4 \rbrace$, we have a set of points in general linear position and the same is true for its diagonal embedding $\mathsf e \subset \mathbb P^2 \times \mathbb P^2$. Notice also that $\dim \vert \mathcal I_{\mathsf e}(1,1) \vert = 4$.
\begin{definition} $t_{\mathsf e}: \mathbb P^2 \times \mathbb P^2 \dasharrow \mathbb P^4$ is the rational map associated to $\vert \mathcal I_{\mathsf e}(1,1) \vert$.
 \end{definition} It is easy to see that $\deg t_{\mathsf e}= 2$. The same is true replacing $t_{\mathsf e}$ by   
\begin{equation}
t_E := t_{\mathsf e} \circ (\sigma \times id_{\mathbb P^2}):S \times \mathbb P^2 \dasharrow \mathbb P^4.
\end{equation}
The strict relation between the tautological morphism $t$ and $t_{\mathsf e}$ is described in the following diagram that is mentioned in \cite{FV}, 2 p.529. This can be written as follows:
\begin{equation} \label{Main CD}
\begin{tikzcd}[column sep=6pc]
  &{\widetilde{\mathbb P}}  \arrow{d}{\epsilon_{_2}}  \arrow{r} {\epsilon_{_1}}& {\mathbb P}   \arrow{d}{t}   \arrow{r}{u} &{S} \\
  {\mathbb P^2 \times \mathbb P^2}\arrow{r}{\sigma^{-1} \times id_{\mathbb P^2}}     &
  {S \times \mathbb P^2}  \arrow{ru}{\epsilon} \arrow{r}{t_E} & {\mathbb P^4} &\\
  \end{tikzcd}
\end{equation}
Here  $\epsilon_{_1}, \epsilon_{_2}$ are birational morphisms resolving the indeterminacy of the birational map
\begin{equation}
\epsilon := \epsilon_{_1} \circ \epsilon_{_2}^{-1}: S \times \mathbb P^2 \dasharrow \mathbb P.
\end{equation}
Let $L \in \vert \sigma^* \mathcal O_{\mathbb P^2}(1) \vert$, as in \cite{FV} 2 (3) the birational map $\epsilon$ can be defined via the exact sequence
$$
0 \to \mathcal V \stackrel {j} \to H^0(\mathcal O_S(L)) \otimes \mathcal O_S(L) \stackrel{r} \to \bigoplus_{i  = 1 \dots 4} \mathcal O_{E_i}(2L) \to 0
$$
where $E_i = \sigma^{-1}(e_i)$ is the exceptional line of $\sigma$ over $e_i$, $r$ is the natural multiplication of sections and $\mathcal V = \Ker r$ is the previously considered
rank $3$ Mukai bundle of $S$. Dualizing and projectivizing  the morphism of locally free sheaves $j$ one obtains $\epsilon: S \times \mathbb P^2 \dasharrow \mathbb P$. 
\begin{remark} \rm It is useful to add some remarks on $\epsilon_1$ and $\epsilon_2$. It turns out that  $\mathbb P_{E_i} := u^{-1}(E_i)$  is biregular to the blowing up of $\mathbb P^3$ along a line.
Moreover let $F_i \subset \mathbb P_{E_i}$ be the exceptional divisor of such a blowing up, then $F_i = \mathbb P^1 \times E_i$ and $u \vert F_i$ is the projection onto $E_i$. We have: \par
\bf (a) \rm  $\epsilon_{_2}: \widetilde {\mathbb P} \to S \times \mathbb P^2$ is the blowing up of the disjoint union $F_1 \cup ... \cup F_4$.  \ \bf (b) \rm
 Consider the threefold $\tilde F_i := \epsilon_{_2}^{-1}(F_i)$ then each fibre of $u \circ \epsilon_{_1}: \tilde F_i \to E_i$ is $\mathbb P^2$ blown up in a point. 
\ \bf (c) \rm  The contraction $\epsilon_{_1}(\tilde F_i)$ of $\tilde F_i$  is $E_i \times \mathbb P^2$ and $\epsilon_{_1}: \tilde F_i \to E_i \times \mathbb P^2$ is the blowing up of $E_i\times \lbrace e_i \rbrace$.
\ \bf (d) \rm Let $\mathbb {P}'_{E_i}$ be the strict transform of $\mathbb P_{E_i}$ by $\epsilon_{_2}$, then $\epsilon_{_1}$ contracts ${\mathbb P}'_{E_i}$ to  $E_i \times \lbrace e_i \rbrace$.
 \end{remark}
 Now let $\phi:=  (\sigma \times id_{\mathbb P^2}) \circ \epsilon$, we consider the birational map
 \begin{equation} \label{QUATTRO}
  \phi : \mathbb P \dasharrow \mathbb P^2 \times \mathbb P^2.
\end{equation}
Let $Q \in \vert \mathcal O_{\mathbb P}(2) \vert$ be general and let $Q' \subset \mathbb P^2 \times \mathbb P^2$ be  its strict transform by $ \phi$. To understand $Q'$ consider $Q_{E_i} := Q \cdot \mathbb P_{E_i}$. It  turns out that $Q_{E_i}$ is $\mathbb P^1 \times \mathbb P^1$ blown up in two distinct points $o_1,o_2$ and that $u: Q_{E_i} \to E_i$ is the conic bundle defined by the pencil $\vert \mathcal  I_{o_1,o_2} (1,1) \vert$, where $\mathcal I_{o_1, o_2}$ is the ideal sheaf of $\lbrace o_1, o_2 \rbrace$ in $Q_{E_i}$.  Now let $E'_1, E'_2$ be the exceptional lines of $Q_{E_i}$ over $o_1, o_2$, then $\phi$ blows $E'_1, E'_2$ up   and flops down the resulting surfaces to two lines. These surfaces intersect at  a double point of $Q'$ which is the contraction of $Q_{E_i}$. Moreover $\phi: Q \to Q'$ is biregular on  $U = Q \setminus (\bigcup_{i = 1 ... 4} Q_{E_i})$. 
 The resulting $Q'$ is singular at $\mathsf e$ and has bidegree $(2,2)$. Let $\mathcal I_{\mathsf e}$ be the ideal sheaf of $\mathsf e$ in $\mathbb P^2 \times \mathbb P^2$, then
 $Q' \in \vert \mathcal I_{\mathsf e}(2,2) \vert$.  The map $\phi$ is studied in \cite{FV} 2. We conclude by pointing out the obvious consequence of our discussion, which will be a useful tool. Let
    \begin{equation} \label{h}
h:= (\sigma \times id_{\mathbb P^2})_* \circ \epsilon_{1*} \circ \epsilon_{_1}^* ,
\end{equation}
 
\begin{theorem} \label {Translation theorem} $h: H^0(\mathcal O_{\mathbb P}(2)) \to H^0(\mathcal I^2_{\mathsf e}(2,2))$ is an isomorphism.  \end{theorem}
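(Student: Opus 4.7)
The plan is to identify $h$ with the strict-transform map on linear systems for the birational map $\phi: \mathbb{P} \dashrightarrow \mathbb{P}^2 \times \mathbb{P}^2$ resolved in diagram (\ref{Main CD}), and to match dimensions on both sides. Setting $m = 2$ in Proposition \ref{dimension} gives $h^0(\mathcal{O}_{\mathbb{P}}(2)) = \binom{6}{4} + \binom{4}{4} = 16$. For the target, K\"unneth yields $h^0(\mathcal{O}(2,2)) = 36$, and at a smooth point of the fourfold $\mathbb{P}^2 \times \mathbb{P}^2$ a scheme-theoretic double point imposes $1 + 4 = 5$ linear conditions, so four double points in general position impose at most $20$ conditions. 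The independence of these $20$ conditions for a generic configuration $\mathsf{e}$ is classical for a smooth fourfold, and it also falls out from the existence of a smooth $(2,2)$-divisor $Q'$ built from a general $Q \in |\mathcal{O}_{\mathbb{P}}(2)|$; hence $h^0(\mathcal{I}_{\mathsf{e}}^{2}(2,2)) = 16$.

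Next I would establish that the image of $h$ lies in $H^0(\mathcal{I}_{\mathsf{e}}^{2}(2,2))$, that is, that the strict transform $Q'$ of a general $Q$ is doubly singular at each $e_i$. The key input is the explicit contraction behaviour of $\phi$ recalled in items (a)--(d) of the remark preceding the theorem: the threefold $\mathbb{P}_{E_i} = u^{-1}(E_i)$ is the blowing up of $\mathbb{P}^3$ along a line, its strict transform $\mathbb{P}'_{E_i} \subset \widetilde{\mathbb{P}}$ is contracted by $\epsilon_{_1}$ onto $E_i \times \{e_i\}$, and composing with $\sigma \times id$ contracts the whole thing to the diagonal point $e_i \in \mathsf{e}$. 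The surface $Q_{E_i} := Q \cdot \mathbb{P}_{E_i}$ described in the paragraph preceding the theorem (a $\mathbb{P}^1 \times \mathbb{P}^1$ blown up at two points, carrying a pencil of conics over $E_i$) thus collapses to $e_i$, and a fibrewise examination of the resolution shows that the tangent cone of $Q'$ at $e_i$ is $3$-dimensional inside $T_{e_i}(\mathbb{P}^2 \times \mathbb{P}^2) \cong \mathbb{C}^4$. This is precisely the condition $Q' \in |\mathcal{I}_{\mathsf{e}}^{2}(2,2)|$, and linearising in $Q$ gives $h(H^0(\mathcal{O}_{\mathbb{P}}(2))) \subseteq H^0(\mathcal{I}_{\mathsf{e}}^{2}(2,2))$.

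Finally, injectivity is automatic: since $\epsilon_{_1}$, $\epsilon_{_2}$ and $\sigma \times id_{\mathbb{P}^2}$ are birational morphisms of smooth varieties, $\epsilon_{_1}^{*}$ is injective on global sections, and the push-forwards $\epsilon_{_{2*}}$ and $(\sigma \times id_{\mathbb{P}^2})_{*}$ are inverse to the corresponding pull-backs on their images, so their composition $h$ is injective. Combined with the equality of dimensions $h^0(\mathcal{O}_{\mathbb{P}}(2)) = 16 = h^0(\mathcal{I}_{\mathsf{e}}^{2}(2,2))$ from the first step, this forces $h$ to be an isomorphism. The main obstacle is the multiplicity-$2$ assertion at each $e_i$: one must verify not only that $Q'$ passes through every $e_i$ but that it is genuinely singular there, and the cleanest route is the fibrewise description of $Q_{E_i}$ borrowed from \cite{FV} 2, which forces the contraction of each $Q_{E_i}$ to create an honest double point on $Q'$ rather than a mere simple base point.
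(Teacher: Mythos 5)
Your strategy --- prove injectivity of $h$, show its image lands in $H^0(\mathcal I^2_{\mathsf e}(2,2))$, and finish by a dimension count --- is a legitimate and in fact more explicit route than the paper's, which states the theorem with no proof at all, as an ``obvious consequence'' of the preceding description of $\phi$ and of the analysis in \cite{FV} 2. Your numbers are right: $h^0(\mathcal O_{\mathbb P}(2))=\binom 64+\binom 44=16$, $h^0(\mathcal O(2,2))=36$, and a double point of a divisor at a smooth point of a fourfold imposes $5$ conditions. The injectivity of $h$ is also unproblematic, since all three maps in its definition are birational morphisms of integral varieties.

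The genuine gap is the equality $h^0(\mathcal I^2_{\mathsf e}(2,2))=16$. The count $36-20$ only yields the lower bound $h^0\geq 16$; surjectivity of $h$ requires the upper bound, i.e.\ the independence of the $20$ conditions, and neither of your justifications delivers it. The existence of one $(2,2)$-divisor $Q'$ with nodes exactly at $\mathsf e$ is a statement about a single element of the kernel of the evaluation map $H^0(\mathcal O(2,2))\to\bigoplus_i\mathcal O/\mathfrak m_{e_i}^2$ and says nothing about the surjectivity of that map; and ``classical'' is not available off the shelf because $\mathsf e$ is not a general $4$-tuple of points of $\mathbb P^2\times\mathbb P^2$ but the diagonal image of four general points of $\mathbb P^2$ --- a codimension-two special position whose harmlessness must itself be checked (it is harmless, but that needs an argument, e.g.\ an explicit rank computation at the standard configuration $(1{:}0{:}0),(0{:}1{:}0),(0{:}0{:}1),(1{:}1{:}1)$, using semicontinuity). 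A cleaner way to close the argument is to bypass independence entirely and exhibit the inverse of $h$ as the strict-transform map along $\phi^{-1}$ applied to $\vert\mathcal I^2_{\mathsf e}(2,2)\vert$ --- this is what the paper implicitly relies on and reuses in the proof of Theorem \ref{BIRATIONAL} --- after which the independence of the $20$ conditions falls out as a corollary rather than being an input. A smaller point: your claim that contracting each surface $Q_{E_i}$ to $e_i$ ``forces'' a double point is not automatic either (a divisor can be contracted to a smooth point, as for the blow-up of a point of $\mathbb C^3$); the multiplicity-two statement needs the finer description of the two flopped surfaces meeting at $e_i$, which is exactly what \cite{FV} 2 supplies.
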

   \section{Igusa pencils and $E_6$-quartic threefolds}
{   Now we introduce the useful notion of an \it Igusa pencil of quartic threefolds. \rm As in (\ref{IOTA}) consider the birational involution $\iota: \mathbb P \to \mathbb P$ induced by the morphism  $t: \mathbb P \to \mathbb P^4$. As in above let $R = t^{-1}(B)$, we know that $\iota^*$ acts linearly on $\vert \mathcal O_{\mathbb P}(2) \vert$ so that its set of fixed points is
\begin{equation}
t^* \vert \mathcal O_{\mathbb P^4 }(2) \vert  \ \cup \ \lbrace R \rbrace.
\end{equation}
Let $\tilde P \subset \vert \mathcal O_{\mathbb P}(2) \vert$ be a $\iota^*$-invariant pencil on which $\iota^*$ is not the identity, then $\iota^*$ acts on $\tilde P$ with exactly two fixed elements. These  are $R$ and $t^*A$, where $A \subset \mathbb P^4$ is a quadric. Equivalently $\tilde P$  is the pull-back by $t$ of the pencil of quartic threefolds $P$, generated by $B$ and the \it double quadric \rm $2A$. Let $A = {\rm  div}(a)$ and ${\rm div}(b) = B$, the equation of $P$ is
\begin{equation}
P = \lambda a^2 + \mu b = 0.
\end{equation}}
\begin{definition} We say that $P$ is an Igusa pencil of quartic threefolds. \end{definition}
\begin{definition} We say that a threefold $X \in P$ is an $E_6$-quartic threefold. \end{definition}
  The family of these pencils is parametrized by $\vert \mathcal O_{\mathbb P^4}(2) \vert$. Assuming $P$ \it general \rm and $X$ \it general \rm  in $P$, let us see some properties of $X$ and of $P$. For $Q \in \vert \mathcal O_{\mathbb P}(2) \vert$ we will set $\overline Q := \iota^*Q$. At first we emphasize that for a general $X$ the threefold $t^*X$ is split:
$$
 t^*X = Q + \overline Q,
$$  where $Q, \overline Q$ are \it smooth \rm and $Q \neq \overline Q$. The base scheme of $P$ is the  \it contact surface \rm
$$
\lbrace a^2 = b = 0 \rbrace = 2A \cdot X \subset \mathbb P^4,
$$
where $A = {\rm div }(a)$ is a smooth quadric transversal to $B$. Let $y \in S$ and $\mathsf P_y \subset \mathbb P^4$ the corresponding plane. We know that   $\mathsf P_y \cdot B = 2B_y$, where $B_y$ is a conic. Let $b_y$ be the equation of $B_y$ in $\mathsf P_y$. Restricting $\lambda a^2 + \mu b$ to $\mathsf P_y$ we obtain the sum of  squares 
$$
\lambda a^2 + \mu b^2_y.
$$
This defines the union of two conics of $\mathsf P_y$, namely $t_* Q_y$ and $t_* \overline Q_y$. Furthermore the covering $
t: t^{-1}(\mathsf P_y) \to \mathsf P_y
$
splits, since it is branched on $2B_y$.  Let $\overline {\mathbb P}_y := \iota^*(\mathbb P_y)$, then $$t^*(\mathsf P_y) = \mathbb P_y + \overline{\mathbb P}_y. $$ In particular $[\mathbb P_y + \overline {\mathbb P}_y] = [H^2]$  in $CH^2(\mathbb P)$ and $\overline {\mathbb P}_y$ is  a birational section of $u: \mathbb P \to S$. 
\begin{theorem} Assume $X$ and $P$ are general as above then:
\begin{enumerate}
\item $\Sing X$ is $A \cap \Sing B$ and consists of $30$ ordinary double points.\smallskip  
\item The discriminants of $u \vert Q$, $u \vert \overline Q$ are smooth genus $6$ Prym curves.\smallskip
\item $t \vert Q: Q \to X$ and $t \vert \overline Q: \to X$ are small contractions to $X$.
\end{enumerate}
\end{theorem}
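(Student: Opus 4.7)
For (1), I exploit the pencil equation $X = \{a^2 + c\,b = 0\}$ with $A = \operatorname{div}(a)$ and $B = \operatorname{div}(b)$, and compute $\nabla(a^2 + cb)(p) = 2a(p)\nabla a(p) + c\nabla b(p)$. Setting this and $a(p)^2 + cb(p) = 0$, if $a(p) = 0$ then $b(p) = 0$ and $\nabla b(p) = 0$, so $p \in A \cap \Sing B$; if $a(p) \neq 0$, the condition $\nabla a \propto \nabla b$ depends only on $(a, b)$ and cuts out a subvariety of dimension at most $1$ in $\mathbb P^4$, while the equation $a^2 + cb = 0$ selects specific values of the pencil parameter $c$ which a generic $X$ in the pencil avoids. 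Hence $\Sing X = A \cap \Sing B$, and as $\Sing B$ is the Cremona--Richmond configuration of $15$ lines and $A$ is a generic smooth quadric, this intersection has $2 \cdot 15 = 30$ points. At each $p$, local coordinates with $a$ transverse to the line $L \subset \Sing B$ through $p$ and $b = x^2 + y^2 + z^2$ the standard transverse $A_1$-model make the Hessian of $a^2 + cb$ at $p$ equal to $\operatorname{diag}(2, 2c, 2c, 2c)$, of rank $4$, so $p$ is an ordinary double point.

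For (2), I invoke Theorem~\ref{CB}, which gives a smooth genus-$6$ Prym curve discriminant for a general $Q \in \vert \mathcal O_{\mathbb P}(2) \vert$. It remains to see that a general pair $(P, X)$ yields a general $Q$. Using the $\iota^*$-eigenspace decomposition $H^0(\mathcal O_{\mathbb P}(2)) = H^+_2 \oplus H^-_2$ from~(\ref{iota}), with $\dim H^+_2 = 15$ and $\dim H^-_2 = 1$, a section $q = q_+ + q_-$ defines $Q$ and satisfies $q \cdot \iota^* q = q_+^2 - q_-^2 = t^*(a^2 + cb)$; this identifies the assignment $Q \mapsto (P, X)$ with a dominant $2\!:\!1$ rational map between $15$-dimensional spaces, the two sheets being exchanged by $Q \leftrightarrow \overline Q$. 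Hence a generic $X$ comes from a generic $Q$ and a generic $\overline Q$, and Theorem~\ref{CB} applies to both.

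For (3), which I regard as the principal obstacle, I first observe that $X \cap F = \Sing X$: if $p \in F \subset \Sing B$ then $b(p) = 0$, forcing $a(p) = 0$ on $X$ and hence $p \in A \cap \Sing B$. Over $X \setminus \Sing X \subset \mathbb P^4 \setminus F$, the map $t$ is finite of degree $2$, and since $Q$ is one sheet of $t^* X = Q + \overline Q$, the restriction $t\vert Q$ is a local isomorphism there (a direct check in the model $\mathbb P = \{w^2 = z\}$ with $X = \{a^2 + cz = 0\}$ handles the ramification points). At a node $p$ I use the Stein factorization $\mathbb P \xrightarrow{c} \overline{\mathbb P} \xrightarrow{\overline t} \mathbb P^4$: locally $\overline{\mathbb P}$ is $\{w^2 = b(x,y,z)\}$ with $b$ of rank $3$, singular along the preimage of the line $L \ni p$, and $\overline t^* X = \overline Q + \overline Q'$ where $\overline Q = \{a + i\sqrt{c}\,w = 0\} \cap \overline{\mathbb P}$; eliminating $a$ gives the local model $\{w^2 = b\}$, a $3$-fold ordinary double point at the preimage of $p$. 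Since $\overline t\vert \overline Q: \overline Q \to X$ is a local isomorphism of matching $3$-fold ODPs and $c$ is a small resolution of $\overline{\mathbb P}$, the restriction $c\vert Q$ is a small resolution of $\overline Q$, replacing the node by a single $\mathbb P^1$. Collecting these pictures, the exceptional locus of $t\vert Q$ consists of $30$ rational curves contracted to the $30$ nodes, of codimension $2$ in $Q$, so $t\vert Q$ is a small contraction; the identical argument applies to $\overline Q$.
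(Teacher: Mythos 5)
Your proof is correct and follows essentially the same route as the paper's: part (1) is the paper's Bertini argument for the pencil made explicit (the paper likewise reduces to the base surface $\{a=b=0\}$, whose singular locus is $A\cap\Sing B$, $30$ ordinary double points); part (2) is the paper's appeal to Theorem \ref{CB} plus the genericity of $Q$, which you justify via the eigenspace splitting $H^+_2\oplus H^-_2$ and the resulting $2\!:\!1$ dominant map $Q\mapsto X$; and part (3) is the paper's observation that the fibre of $t$ over a general point of $\Sing B$ is a $\mathbb P^1$, which you verify in local coordinates through the Stein factorization. You supply the local models, the Hessian computation at the nodes, and the dominance argument that the paper leaves implicit, but no new idea is involved.
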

 \begin{proof} By Bertini theorem $X \in P$ is smooth outside the surface $B_a := \lbrace a = b = 0 \rbrace$. Moreover $\Sing B_a = \Sing B \cap A$ and this set
 consists of $30$ ordinary double points of $X$. Let $x \in B_a - \Sing B_a$ then $B$ is smooth at $x$ and hence $\lbrace a^2 = 0 \rbrace$ is the
 unique element of $P = \lbrace \lambda a^2 + \mu b = 0 \rbrace$ which is singular at $x$. This implies (1). (2) follows from theorem (\ref{CB}) since $Q$, $\overline Q$ are general
 in $\vert \mathcal O_{\mathbb P}(2) \vert$. Then they are smooth with no fibre of rank $\leq 1$. For (3) it suffices to recall that the fibre of $t$ at a general $x \in \Sing B$ is $\mathbb P^1$.
  \end{proof}
Let $Q$, $\overline Q$ be as above, consider the Cartesian square of \it birational morphisms \rm
\begin{equation} \label{JX}
\begin{CD}
{\tilde X}@>>> Q \\
@VVV @V{t \vert Q}VV \\
{\overline Q} @>{t \vert  \overline Q}>>X. \\
\end{CD}
\end{equation}
Since $X$ is general the set $\Sing X = X \cap \Sing B$   is general in $\Sing B$ and, by theorem (\ref{Fundamental}),    the fibre of $t \vert Q$ and $t \vert \overline Q$ at $o \in \Sing X$ is $\mathbb P^1$. Let $\beta: \tilde X \to X$ be the birational morphism
induced by the diagram, then $\beta^*(o) = \mathbb P^1 \times \mathbb P^1$  and   $\beta$ is the blowing up of $\Sing X$. $\tilde X$ is smooth and, denoting by $T$ the intermediate Jacobian of $T$, one has:
 \begin{equation}
JQ \cong J \tilde X \cong J\overline Q.
\end{equation}
See \cite{CG}, lemma 3.11. Finally let $(C, \eta)$ and $(\overline C, \overline \eta)$ be the discriminants of $u \vert Q$ and $u \vert \overline Q$. Consider the corresponding Prym varieties $P(C,\eta)$ and $P(\overline C, \overline \eta)$ as in \cite{B1}, then
\begin{equation}
JQ \cong P(C, \eta) \ , \ J\overline Q \cong P(\overline C, \overline \eta).
\end{equation}
Now let us recall that $t: \mathbb P \to \mathbb P^4$ is just one of the six double coverings $$ t_i: \mathbb P_i \to \mathbb P^4, \ i = 1 ... 6, $$ considered in
(\ref{SIXD}). Hence $t^*X = Q + \overline Q$ is just one of the splittings
\begin{equation}
t_i^*X = Q_i + \overline Q_i \subset \mathbb P_i, \ i = 1 ... 6.
\end{equation}
Hence, birationally,  $X$ is naturally endowed with six pairs of conic bundle structures:
\begin{equation}
\lbrace u_i\vert Q_i: Q_i \to S_i   \  , \   u_i\vert \overline Q_i: \overline Q_i \to S_i \rbrace.
\end{equation}
 To simplify our notation we set:
\begin{equation}
 \ \ u_i := u_i \vert Q_i \ , \ \overline u_i := u_i \vert \overline Q_i.
\end{equation}
 \begin{definition} \label{SIX}  $\lbrace u_i \ , \  \overline u_i \ \ \ i = 1...6 \rbrace$ is the double six of conic bundle structures of $X$. \end{definition}
The set of  corresponding discriminants is  \it the double six of Prym curves of $X$\rm:  
 \begin{equation} \label{DSX} \lbrace  (C_i, \eta_i) \ , \ (\overline C_i, \overline \eta_i), \ \ \ i = 1 ... 6 \rbrace.
\end{equation} \par
\begin{remark} \rm One recognizes here the shadow of the set of  $27$ lines on the cubic surface, via its $36$ Schl\"afli double sixs.   Let
$ \frak p: \mathcal R_6 \to \mathcal A_5$ be the Prym map,
sending $(C,\eta)$ to its Prym $P(C, \eta)$, \cite{DS, D1}. $\frak p$ has degree
$27$. The Weyl group of $E_6$, preserving the incidence 
of the $27$ lines, is its mondromy group. So far we see that \it the double six of $X$ defines a subset of the fibre of $\frak p$ at the moduli point of $J\tilde X$ in $\mathcal A_5$.  \end{remark} 
   \section{Moduli of Prym sextics of genus $6$}
It is time to address the rationality results for {  some} moduli spaces related to $\mathcal R_6$ and their relations. This involves the moduli of $4$-nodal Prym
plane sextics of genus six. 
With this in mind we start from a general genus $6$ Prym curve $(C, \eta)$. We know from theorem \ref{CB} and \cite{FV} that $(C, \eta)$ is biregular to the discriminant of a smooth $Q \in \vert \mathcal O_{\mathbb P}(2) \vert$. According to the same theorem we know more: consider $ u \vert Q: Q \to S \subset \mathbb P^5 $
then $C$ is canonical in $\mathbb P^5$.  Moreover let \begin{equation} s: C \to \mathbb P \end{equation} be the map sending $y \in C$ to $\Sing Q_y$, then $s \circ t: C \to \mathbb P^4$ is the Prym canonical map of $(C, \eta)$.  Consider $C_s := s(C)$ and its ideal sheaf $\mathcal I_{C_s}(2)$ in $\mathbb P$, we have the standard exact sequence
$$ 
0 \to \mathcal I_{C_s}(2) \to \mathcal O_{\mathbb P}(2) \to \mathcal O_{C_s}(2) \to 0.
$$
Notice also that $\mathcal O_{C_s}(1) \cong \omega_C \otimes \eta$ and $\mathcal O_{C_s}(2) \cong \omega_C^{\otimes 2}$. Let $(C,\eta)$ be general then:
\begin{proposition} \label {4.1} One has  $h^0(\mathcal I_{C_s}(2)) = 1$ and $h^i(\mathcal I_{C_s}(2)) = 0$, $i \geq 1$. \end{proposition}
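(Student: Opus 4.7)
The plan is to analyze the short exact sequence displayed above and reduce the statement to the surjectivity of the restriction map $r\colon H^0(\mathcal O_{\mathbb P}(2)) \to H^0(\mathcal O_{C_s}(2))$. First I identify the restriction sheaf: since $s\colon C \to C_s$ is an isomorphism and $s^{*}\mathcal O_{\mathbb P}(1) \cong \omega_C \otimes \eta$ by Theorem \ref{CB}(5), and since $\eta^{\otimes 2} \cong \mathcal O_C$, we have $\mathcal O_{C_s}(2) \cong \omega_C^{\otimes 2}$. Riemann--Roch on the genus six curve $C$ gives $h^0(\omega_C^{\otimes 2}) = 3g-3 = 15$ and $h^i(\omega_C^{\otimes 2}) = 0$ for $i \geq 1$. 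Combined with $h^0(\mathcal O_{\mathbb P}(2)) = 16$ and the vanishing of higher cohomology from Proposition \ref{dimension}, the long exact sequence becomes
\[
0 \to H^0(\mathcal I_{C_s}(2)) \to H^0(\mathcal O_{\mathbb P}(2)) \xrightarrow{r} H^0(\omega_C^{\otimes 2}) \to H^1(\mathcal I_{C_s}(2)) \to 0,
\]
with $H^i(\mathcal I_{C_s}(2)) = 0$ automatic for $i \geq 2$. Thus the entire statement reduces to showing that $r$ is surjective, which will force $h^0 = 16-15 = 1$ and $h^1 = 0$.

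To prove surjectivity I would decompose the source of $r$ by means of the $\iota^*$-eigenspace splitting \eqref{iota}, namely $H^0(\mathcal O_{\mathbb P}(2)) = H^+_2 \oplus H^-_2$, with $H^+_2 = t^* H^0(\mathcal O_{\mathbb P^4}(2))$ of dimension $15$ and $H^-_2 = \langle q_- \rangle$ of dimension $1$. Via the isomorphism $H^+_2 \cong H^0(\mathcal O_{\mathbb P^4}(2))$, the restriction $r|_{H^+_2}$ corresponds to pulling back quadrics of $\mathbb P^4$ along the Prym-canonical map $t \circ s\colon C \to \mathbb P^4$ (Theorem \ref{CB}(5)), and hence identifies with the multiplication map
\[
\mu\colon \mathrm{Sym}^2 H^0(\omega_C \otimes \eta) \longrightarrow H^0(\omega_C^{\otimes 2}),
\]
between spaces of equal dimension $15$. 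Surjectivity of $r$ is then equivalent to $\mu$ being an isomorphism, i.e.\ to the Prym-canonical image $\Gamma = (t \circ s)(C) \subset \mathbb P^4$ being contained in no quadric.

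The crux of the proof is therefore the claim that for a general $(C,\eta) \in \mathcal R_6$ the curve $\Gamma$ lies on no quadric of $\mathbb P^4$. This is the standard statement on the Prym-canonical embedding of a general genus six Prym curve, available from the classical theory and from \cite{FV} on which the present paper builds; alternatively, a self-contained semicontinuity argument suffices, producing a single $(C,\eta)$ for which $\mu$ is injective through the birational model of Theorem \ref{Translation theorem}, where the isomorphism $H^0(\mathcal O_{\mathbb P}(2)) \cong H^0(\mathcal I^2_{\mathsf e}(2,2))$ transports $C_s$ to an explicit curve in $\mathbb P^2 \times \mathbb P^2$ on which the analogous computation is transparent. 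Once this input is granted, the argument of the previous paragraph concludes: $\mathcal I_{C_s}(2)$ has a one-dimensional space of global sections, generated by $Q$ itself, and all of its higher cohomology vanishes.
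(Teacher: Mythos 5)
Your proof is correct and follows essentially the same route as the paper: both reduce the statement, via the displayed exact sequence, the dimension counts $h^0(\mathcal O_{\mathbb P}(2))=16$, $h^0(\omega_C^{\otimes 2})=15$, and the eigenspace decomposition of $H^0(\mathcal O_{\mathbb P}(2))$, to the fact that no quadric of $\mathbb P^4$ contains the Prym-canonical model of a general $(C,\eta)$. The paper pins down that key input with a precise reference---a quadric through the Prym-canonical curve would place $(C,\eta)$ in the ramification locus of the Prym map (\cite{B1} 7.5), contradicting generality---which is the citation you should supply in place of ``standard''.
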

\begin{proof}  We have $h^0(\mathcal O_{C_s}(2)) = 15$ and $h^0(\mathcal O_{\mathbb P}(2)) = 16$ from \ref{dimension}. Hence the previous exact sequence implies 
$h^0(\mathcal I_{C_s}(2)) \geq 1$. Assume the latter inequality is strict, then it follows that $\dim   t^* H^0(\mathcal  O_{\mathbb P}(2))   \cap H^0(\mathcal I_{C_s}(2)) \geq 1$ in $H^0(\mathcal O_{\mathbb P}(2))$. Equivalently a quadric contains the Prym canonical model $t(C_s)$. But then the moduli point of $(C,\eta)$ in $\mathcal R_6$  is in the ramification of the Prym map $\frak p$, \cite {B1} 7.5, and $(C, \eta)$ is not general: a contradiction.
\end{proof}
Let $\mathbb P_C \subset \mathbb P$ be the universal plane over $C$. Then $\mathbb P_C$ is defined by the embedding of $C$ in $\mathbb G$ via  $\mathcal V_C := \mathcal V \otimes \mathcal O_C$, where $\mathcal V$ is $\mathsf u_* \mathsf t^* \mathcal O_{\mathbb P^4}(1)$. Then $\mathcal V_C$  is uniquely defined as the kernel of the evaluation of global sections of the \it unique \rm rank two \it Mukai bundle \rm over $C$, see \cite{M} section 5 for its definition and uniqueness. Moreover the map $s$ is uniquely defined by an exact sequence
$$
0 \to \mathcal N_C \to \mathcal V_C \stackrel {v_s} \to \omega_C \otimes \eta \to 0
$$
for a given $1$-dimensional space $< v_s >  \ \subset  {\rm Hom}(\mathcal V_C, \omega_C \otimes \eta)$.  
Conversely, starting from a general $ v \in {\rm Hom}(\mathcal V_C, \omega_C \otimes \eta)$, one obtains an exact sequence as above and a section $s_v: C \to \mathbb P_C \subset \mathbb P$. For a general $v$, $s_v(C)$ is certainly contained in a unique $Q_v \in \vert \mathcal O_{\mathbb P}(2) \vert$, as prescribed in (\ref{4.1}). However $C$ does not necessarily coincide with the discriminant curve of the conic bundle $u \vert Q_v$ nor $s_v: C \to Q_v$ is the map sending $y \in C$ to the singular point of the fibre of $u \vert Q_v$ at $y$. This prompts us for the next definition. Let $(C, \eta, s)$ be \it any \rm triple such that:
\begin{enumerate} \it
\item $C \in \vert \mathcal O_S(2) \vert$ is smooth and $(C, \eta)$ is a Prym curve,
\item {  $s: C \to \mathbb P_C$ is a section of the $\mathbb P^2$-bundle $u\vert \mathbb P: \mathbb P_C \to C$,
\item $s$ is defined by an epimorphism  $v \in {\rm Hom}(\mathcal V_C, \omega_C \otimes \eta)$,}
\item a unique $Q \in \vert \mathcal O_{\mathbb P}(2) \vert$ contains $s(C)$.
\end{enumerate}
\begin{definition} \label{Steiner} $(C, \eta, s)$ is a Steiner map of the pair $(C, \eta)$ if the discriminant of $u \vert Q$ is the pair $(C, \eta)$ and $s: C \to Q$ is the map sending $y \in C$ to the singular point of 
${u \vert Q}^*(y)$.
\end{definition}
The name is motivated by some relation to the classical notion of \it Steinerian\rm, the locus of singular points of the quadratic polars of a  hypersurface,  \cite{D}, 1.1.6. If $(C,\eta,s)$ is a Steiner map, $Q$ is smooth and each fibre of $u \vert Q$ has rank $\geq 2$, cfr. \cite{FV} 2. Now a general $Q \in \vert \mathcal O_{\mathbb P}(2) \vert$ 
defines a triple $(C, \eta, s)$ satisfying the above conditions and such that $s$ is the Steiner map of $u \vert Q$. Therefore, giving a Steiner map $(C, \eta, s)$, is equivalent
to give a general $Q$ with discriminant $(C, \eta)$ and Steiner map equal to $s$.
  \par Let $t := (C,\eta,s)$, $t' := (C',\eta',s')$ be two Steiner maps and $u \vert Q$, $u \vert Q'$ their respectively associated conic bundles. We will say that \it $t$ and $t'$ are isomorphic \rm if there exists a biregular map $\alpha: Q \to Q'$ such that $u \circ \alpha = u \vert Q$. It is easy to see that $\alpha^* \mathcal O_{Q'}(1) \cong \mathcal O_Q(1)$, so that $\alpha = a \vert Q$ for some $a \in \Aut \mathbb P$. To construct the moduli of Steiner maps, somehow abusing of the word \it moduli\rm, we rely on Geometric Invariant Theory and construct the GIT quotient $\vert \mathcal O_{\mathbb P}(2) \vert \dslash \Aut \mathbb P$. Since $\mathbb P$ is the universal plane of $\mathbb P^4$ over $S$, then $\Aut \mathbb P$ is the subgroup of 
$\Aut \mathbb P^4$ leaving invariant the congruence of planes $\lbrace \mathsf P_x, x \in S \rbrace$. This is actually the symmetric group $\frak S_5 \cong \Aut S$. 
  
 \begin{definition} {$\mathcal R^{cb}_6:=  \vert \mathcal O_{\mathbb P}(2) \vert \dslash \Aut \mathbb P$} is the moduli space of Steiner maps. \end{definition} 
By \cite{MF} 0.2 (2) the GIT quotient $\mathcal R^{cb}_6$ is reduced, irreducible and normal. $\mathcal R^{cb}_6$ has dimension $15$ and dominates $\mathcal R_6$ via the forgetful map, \cite{FV}.  In \cite{SB}  Shepherd Barron proves that the quotient $\mathbb P(V) \dslash \frak S_5$ of any finite representation $V$ of $\frak S_5$ is rational. This implies the next theorem.
   \begin{theorem} \label{NEWRATIONAL} $\mathcal R^{cb}_6$ is rational. \end{theorem}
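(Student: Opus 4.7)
The idea is to realize $\mathcal{R}^{cb}_6$ as a quotient of a projective space by a linear $\mathfrak{S}_5$-action, and then invoke Shepherd-Barron's rationality theorem from \cite{SB} directly.

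First, I would make the linear-representation structure explicit. Set $V := H^0(\mathcal{O}_{\mathbb{P}}(2))$, so that by Proposition \ref{dimension} one has $\dim V = \binom{6}{4}+\binom{4}{4} = 16$, and $|\mathcal{O}_{\mathbb{P}}(2)| = \mathbb{P}(V)$ is a $\mathbb{P}^{15}$. The group $\Aut\mathbb{P}$ acts biregularly on $\mathbb{P}$ and preserves the tautological class (since it is pulled back from the $\Aut\mathbb{P}^4$-invariant class on $\mathbb{P}^4$ via $t$, together with the $\Aut S$-equivariant structure on $u$), so the action lifts linearly to $V$. By construction the induced projective action on $|\mathcal{O}_{\mathbb{P}}(2)|$ is precisely the action whose GIT quotient defines $\mathcal{R}^{cb}_6$.

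Next, I would identify the acting group. As recalled just before the statement, $\Aut\mathbb{P}$ is the subgroup of $\Aut\mathbb{P}^4$ preserving the congruence of planes $S$, and via its action on the base $S$ it is isomorphic to $\Aut S\cong\mathfrak{S}_5$. Thus $V$ is a $16$-dimensional linear representation of $\mathfrak{S}_5$, and the GIT quotient may be rewritten as
\[
\mathcal{R}^{cb}_6 \;=\; |\mathcal{O}_{\mathbb{P}}(2)|\dslash\Aut\mathbb{P} \;=\; \mathbb{P}(V)\dslash\mathfrak{S}_5.
\]

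Finally, apply Shepherd-Barron's theorem \cite{SB}: for any finite-dimensional linear representation $V$ of $\mathfrak{S}_5$, the quotient $\mathbb{P}(V)\dslash\mathfrak{S}_5$ is rational. Applied to the $V$ above, this yields the rationality of $\mathcal{R}^{cb}_6$. There is no genuine obstacle, as the argument is essentially a citation; the only point requiring care is the verification that the action of $\Aut\mathbb{P}$ on $V$ is indeed the restriction of a linear $\mathfrak{S}_5$-action, which is immediate from the identification $\Aut\mathbb{P}\cong\Aut S\cong\mathfrak{S}_5$ established in Section 2.
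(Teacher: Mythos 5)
Your proposal is correct and coincides with the paper's own argument: the paper likewise observes that $\Aut\mathbb P\cong\Aut S\cong\frak S_5$ acts linearly on the $16$-dimensional space $H^0(\mathcal O_{\mathbb P}(2))$ and deduces the rationality of $\mathcal R^{cb}_6=\mathbb P(V)\dslash\frak S_5$ directly from Shepherd-Barron's theorem that $\mathbb P(V)\dslash\frak S_5$ is rational for any finite-dimensional representation $V$ of $\frak S_5$. No further comment is needed.
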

We recall from (\ref{Steiner}) that giving a general marked $E_6$-quartic $(X,Q)$ is equivalent to give its Steiner map $(C, \eta, s)$. Hence $\mathcal R^{cb}_6$ is also the GIT quotient parametrizing
 $\Aut \mathbb P$ classes of marked $E_6$-quartics.  Continuing in the same vein let $\sigma: S \to \mathbb P^2$ be a fixed contraction
 of four lines and $\Aut_{\sigma} \subset \Aut S$ the stabilizer of its exceptional divisor: 
 \begin{definition} $  \widetilde{\mathcal R}^{cb}_6   = \vert \mathcal O_{\mathbb P}(2) \dslash \Aut_{\sigma} S$. \end{definition}      \begin{theorem} \label{NEWRATIONAL} $  \widetilde{\mathcal R}^{cb}_6  $ is rational. \end{theorem}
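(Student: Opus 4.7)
The plan is to mirror the proof of the preceding rationality theorem for $\mathcal R^{cb}_6$. By definition $\widetilde{\mathcal R}^{cb}_6 = \vert \mathcal O_{\mathbb P}(2) \vert \dslash \Aut_\sigma S$, and since $\Aut_\sigma S \cong \mathfrak S_4$ is the stabilizer in $\Aut S \cong \mathfrak S_5$ of the exceptional divisor $E_1 + \cdots + E_4$ of $\sigma$, the quotient is that of the projective space $\vert \mathcal O_{\mathbb P}(2) \vert \cong \mathbb P^{15}$ by the linear action obtained by restricting to $\mathfrak S_4$ the $\mathfrak S_5$-representation used in the previous theorem.

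The main ingredient is a Noether-type rationality statement: for every faithful finite linear representation $V$ of $\mathfrak S_4$, the GIT quotient $\mathbb P(V) \dslash \mathfrak S_4$ is rational. This is a classical companion to, and in fact easier than, Shepherd-Barron's theorem for $\mathfrak S_5$ invoked in the previous theorem, and is covered by (or implicit in) the methods of \cite{SB}. Applying it to $V = H^0(\mathcal O_{\mathbb P}(2))$ at once gives the rationality of $\widetilde{\mathcal R}^{cb}_6$.

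For a more explicit geometric route, one can use the Translation Theorem \ref{Translation theorem} to identify $\vert \mathcal O_{\mathbb P}(2) \vert$ with $\vert \mathcal I^2_{\mathsf e}(2,2) \vert$. Under this identification the $\Aut_\sigma S$-action becomes the diagonal action of the copy of $\mathfrak S_4 \subset \mathrm{PGL}_3$ that permutes the four general points $\mathsf e = \{e_1, \ldots, e_4\}$ of $\mathbb P^2$, and the quotient parametrizes unordered configurations of four general points of $\mathbb P^2$ together with a bidegree-$(2,2)$ form of $\mathbb P^2 \times \mathbb P^2$ having double points at each of them, modulo projective equivalence. The main obstacle along this route is producing an explicit birational parametrization; a no-name-lemma argument over the $\mathrm{PGL}_3$-equivariant family of ordered $4$-tuples, combined with the classical rationality of $\mathrm{Sym}^4 \mathbb P^2$, accomplishes this and recovers the same conclusion.
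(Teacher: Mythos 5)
Your first route is essentially the paper's argument pushed up one level of generality, so the approach is sound; the difference is where the actual work gets done. The paper does not invoke a blanket rationality theorem for all faithful $\frak S_4$-representations: it exhibits a concrete $4$-dimensional subrepresentation $W \subset V = H^0(\mathcal O_{\mathbb P}(2))$, namely the span of $2P_1,\dots,2P_4$ where $P_i = u^{-1}(E_i)$ is the universal plane over the exceptional line $E_i$, on which $\frak S_4$ acts by permutation. Projecting $\mathbb P(V^*)\dasharrow\mathbb P(W^*)\cong\mathbb P^3$ and blowing up the center yields a $\frak S_4$-linearized $\mathbb P^{12}$-bundle that descends to a $\mathbb P^{12}$-bundle over $\mathbb P^3\dslash\frak S_4$, which is rational by the symmetric functions theorem. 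Your general principle --- $\mathbb P(V)\dslash\frak S_4$ is rational for every faithful $V$ --- is true, but you assert it rather than prove it, and "covered by or implicit in the methods of \cite{SB}" is not a citation of a stated theorem. To close that gap you should note that every faithful representation of $\frak S_4$ contains a faithful $3$-dimensional irreducible summand $W_0$ (the only faithful irreducibles of $\frak S_4$ are the two $3$-dimensional ones, and any sum of the remaining irreducibles has kernel containing the Klein four-group), that $\mathbb P(W_0)\dslash\frak S_4$ is a unirational, hence rational, surface, and that the no-name lemma applied to the $\frak S_4$-equivariant projection $\mathbb P(V)\dasharrow\mathbb P(W_0)$ finishes; this is exactly the shape of the paper's proof with $W$ in place of $W_0$, so either spell it out or replace it by the explicit subrepresentation argument.

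Your second, "more explicit geometric" route should be dropped or repaired: once Theorem \ref{Translation theorem} identifies $\vert\mathcal O_{\mathbb P}(2)\vert$ with $\vert\mathcal I_{\mathsf e}^2(2,2)\vert$, the group acting is already the finite stabilizer $\frak S_4$ of $\mathsf e$ inside $\PGL_3$, so there is no residual $\PGL_3$-quotient to take and $\Sim^4\mathbb P^2$ does not genuinely enter; if you instead symmetrize over ordered $4$-tuples and then try to quotient by $\PGL_3$, you must justify descent for a non-special group, which a bare appeal to the no-name lemma does not do.
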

\begin{proof} We give a standard proof related to the geometry of $\vert \mathcal O_{\mathbb P}(2) \vert$ and similar to theorem 6 in \cite{SB}. $\frak S_4$  is isomorphic to $\Aut_{\sigma} S$ and acts on the exceptional divisor $E := E_1 + E_2 + E_3 + E_4$ of $\sigma$ as the permutation group of its irreducible components. $E_i$ is a line in the Grassmannian $\mathbb G$ that is a pencil of planes in $\mathbb P^4$. Let $u: P_i \to E_i$ be the universal plane and $T_i := t(P_i)$. Then $T_i$ is a hyperplane and $t \vert T_i: P_i \to T_i$ is the contraction of $E_i \times F_i$ to $F_i$, where $F_i$ is the base line of the pencil $E_i$.  
Clearly $\frak S_4$ acts on the set $\lbrace T_1 \dots T_4 \rbrace$. More precisely $\frak S_4 = \Aut_{\sigma}S$ is the stabilizer of the point $o_{\sigma} = T_1 \cap \dots \cap T_4$ under the action of $\Aut S$ on $\mathbb P^4$.  $\frak S_4$ acts in the same way on the set $\lbrace 2P_1 \dots 2P_4 \rbrace \subset \vert \mathcal O_{\mathbb P}(2) \vert$ and this spans $\mathbb P(W)$,
where $W \subset V := H^0(\mathcal O_{\mathbb P}(2) \vert$ is an isomorphic representation of dimension $4$. Passing to the dual spaces and projectivizing we obtain a linear projection $\pi: \mathbb P(V^*) \to \mathbb P(W^*)$.  After the blowing up $\gamma$ of its center, we obtain a $\mathbb P^{12}$-bundle $p: \mathbb P_{\sigma} \to \mathbb P(W^*)$ and the $\frak S_4$-linearized line bundle $\gamma^* \mathcal O_{\mathbb P(V^*)}(1)$. By the same proof of \cite{SB} theorem 6, this descends to a $\mathbb P^{12}$-bundle on $\mathbb P^3 \dslash \frak S_4$, which is rational by the symmetric functions theorem. This implies the statement. \end{proof} 
 \rm  Now the inclusion $\Aut S_{\sigma} \subset \Aut S$ induces a morphism $\tilde d: \widetilde {\mathcal R}^{cb}_6 \to \mathcal R^{cb}_6$ of degree $5$.
Let $(C, \eta, s)$ be a Steiner map and let  $M := \sigma^* \mathcal O_{\mathbb P^2}(1) \otimes \mathcal O_C$. Then $M$ defines the plane sextic model $\sigma(C)$ and it is one of the five elements of the Brill-Noether locus \begin{equation} W^2_6(C) := \lbrace M \in \Pic^6 C \ \vert \ h^0(M) \geq 3 \rbrace. \end{equation} 
Consider the set of $4$-tuples  $\lbrace (C,\eta, s, M), \ M \in W^2_6(C) \rbrace$ and its orbit $F$ under the action of $\Aut S$. It is easy to see that the action of $\Aut_{\sigma} S$ on $F$ leaves each $M$ invariant. In particular this establishes a natural bijection between $W^2_6(C)$ and the fibre of $\tilde d$ at the isomorphism class of $(C, \eta,s)$. Then the next theorem follows.
  \begin{theorem} $\widetilde {\mathcal R}_6^{cb}$ is the parameter space for the $\Aut_{\sigma} S$-isomorphism classes of the fourtuples $(C, \eta, s, M)$ and $\tilde d: \widetilde{\mathcal R}^{cb}_6 \to \mathcal R^{cb}_6$ is the forgetful map.\end{theorem}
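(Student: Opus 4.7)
The plan is to combine the definition $\widetilde{\mathcal R}^{cb}_6 = \vert \mathcal O_{\mathbb P}(2) \vert \dslash \Aut_\sigma S$ with the previously established moduli interpretation of $\mathcal R^{cb}_6$ as the space of isomorphism classes of Steiner maps $(C,\eta,s)$. Since $\tilde d$ is induced by the subgroup inclusion $\Aut_\sigma S \subset \Aut S$, it has degree $[\frak S_5:\frak S_4] = 5$, and the task is to recognize the extra datum that refines a Steiner map into a $\Aut_\sigma S$-orbit as the plane sextic line bundle $M$.

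The first step is the classical fact that, for a general canonically embedded $C \subset S \subset \mathbb P^5$ of genus $6$, the five contractions $\sigma_i: S \to \mathbb P^2$ of four disjoint lines restrict to the five line bundles $M_i := \sigma_i^*\mathcal O_{\mathbb P^2}(1) \otimes \mathcal O_C$, and these exhaust $W^2_6(C)$. The group $\Aut S \cong \frak S_5$ permutes the $\sigma_i$ transitively, inducing an equivariant transitive action on $W^2_6(C)$; by definition $\Aut_\sigma S$ is the stabilizer of $\sigma$ and therefore fixes the chosen $M$. Conversely, equivariance of the bijection $\sigma_i \mapsto M_i$ forces the stabilizer of $M$ inside $\Aut S$ to coincide with $\Aut_\sigma S$.

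I would then define the natural assignment $Q \mapsto (C,\eta,s,M)$ using the fixed contraction $\sigma$ and check that it descends to a bijection between $\widetilde{\mathcal R}^{cb}_6$ and the space of $\Aut_\sigma S$-isomorphism classes of fourtuples. Invariance under $\Aut_\sigma S$ is clear: any $a \in \Aut \mathbb P \cong \Aut S$ carrying $Q$ to $Q'$ transports the Steiner triple $(C,\eta,s)$ correspondingly, and when $a \in \Aut_\sigma S$ it fixes $M$. Injectivity follows from the converse observation of the previous step, namely that any $\Aut S$-automorphism preserving $M$ lies in $\Aut_\sigma S$; surjectivity and the fiber count for $\tilde d$ are consequences of the identification of the fiber over $[(C,\eta,s)]$ with the coset space $\Aut S/\Aut_\sigma S \cong W^2_6(C)$. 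The main subtle point, and essentially the only substantive ingredient beyond formal manipulation of GIT quotients, is the equivariant bijection between contractions and $W^2_6(C)$ together with its corollary on stabilizers, both of which rely on the general position hypothesis ensuring that the five restricted line bundles are pairwise distinct on $C$.
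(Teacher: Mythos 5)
Your proposal is correct and takes essentially the same route as the paper: the paper also identifies the fibre of $\tilde d$ over a Steiner map $(C,\eta,s)$ with $W^2_6(C)$ via the five contractions $\sigma_i$, using that $\Aut S\cong\frak S_5$ permutes them transitively while $\Aut_\sigma S$ fixes the chosen $M=\sigma^*\mathcal O_{\mathbb P^2}(1)\otimes\mathcal O_C$. Your additional remarks on stabilizers and the general-position hypothesis only make explicit what the paper leaves as ``easy to see.''
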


\begin{definition} A Prym plane sextic of genus $6$ is a pair $(C', \eta')$ such that:
\begin{enumerate}
\item  $C' \subset \mathbb P^2$ is a $4$-nodal plane sextic, 
\item $\Sing C'$ is a set of $4$ points in general position,
\item  $\eta' \in \Pic C'$ is a non zero $2$-torsion element
\item  $\eta'$ is endowed with an isomorphism $\eta'^{\otimes 2} \cong \mathcal O_{C'}$.
\end{enumerate}
\end{definition}
 \begin{definition} $\mathcal R^{ps}_6$ is the moduli space of genus $6$ Prym sextics $(C', \eta')$. \end{definition}
 $C'$ is \it integral \rm and \it stable \rm of arithmetic genus $10$. Let $\overline{\mathcal R}_g$ be the compactification of $\mathcal R_g$ as in \cite{FL}, then $(C', \eta')$ defines a point in the boundary $\overline {\mathcal R}_{10} \setminus \mathcal R_{10}$. This assignment defines a generically injective rational map $\mathcal R^{ps}_6 \dasharrow \overline {\mathcal R}_{10}$.  $\mathcal R^{ps}_6$ is 
  irreducible, \cite{FV} 1.  In the sequel we prove that $\mathcal R^{ps}_6$ and $  \widetilde{\mathcal R}^{cb}_6  $ are birational.  We will see that a $4$-tuple
 $(C, \eta, s, M)$ defines uniquely  a pair $(C', \eta')$ up to isomorphisms and conversely.   Let 
$ \nu: C \to C' $ be the normalization map, $\nu^*$ defines the exact sequence of $2$-torsion groups
\begin{equation} \label{SEXTIC}
0 \to \mathbb Z_2^4 \to \Pic_2 C' \stackrel{\nu^*} \to \Pic_2 C \to 0.
\end{equation}
It is indeed well known that $\Ker \nu^*$ is determined by the $4$-nodes of $C'$ as ${\mathbf C^*}^4$.  
\begin{definition} We fix the notation
$ \nu^* \eta' =: \eta$.
\end{definition}
As in above, let us denote by $\mathsf e \subset \mathbb P^2 \times \mathbb P^2$ the diagonal embedding of $\Sing C'$ and by $\mathcal I_{\mathsf e}$ its ideal sheaf. Our goal is now to prove the following theorem.
\begin{theorem} \label{BIRATIONAL} $  \widetilde{\mathcal R}^{cb}_6  $ and $\mathcal R^{ps}_6$ are birational. \end{theorem}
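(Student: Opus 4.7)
The plan is to use the linear isomorphism $h: H^0(\mathcal O_{\mathbb P}(2)) \to H^0(\mathcal I^2_{\mathsf e}(2,2))$ of Theorem \ref{Translation theorem} and the diagram (\ref{Main CD}) to translate a Steiner datum into a $4$-nodal Prym plane sextic, by post-composing with the first projection $p_1:\mathbb P^2\times\mathbb P^2\to\mathbb P^2$. Concretely, I would first construct the forward map as follows. Given a general $Q\in|\mathcal O_{\mathbb P}(2)|$ and its attached $4$-tuple $(C,\eta,s,M)$ with $M=\sigma^*\mathcal O_{\mathbb P^2}(1)\otimes\mathcal O_C$, set $Q':=h(Q)\in|\mathcal I^2_{\mathsf e}(2,2)|$. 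The restriction $p_1|Q':Q'\to\mathbb P^2$ is a conic bundle whose discriminant $C'\subset\mathbb P^2$ is a plane sextic; since $Q'$ is singular at each $e_i\in\mathsf e$, $C'$ acquires a node at every $e_i$, and for $Q$ generic these are its only singularities and they lie in general linear position. The smooth conic bundle $p_1|Q'$ over $\mathbb P^2\setminus C'$ yields, in the standard way, an \'etale double cover of $C'$, hence a non-zero $\eta'\in\Pic C'$ of order two, so the assignment $Q\mapsto (C',\eta')$ lands in $\mathcal R^{ps}_6$.

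Next I would check that this assignment descends to a rational map $\alpha:\widetilde{\mathcal R}^{cb}_6\dasharrow\mathcal R^{ps}_6$. The action of $\Aut_{\sigma}S\cong\mathfrak S_4$ on $\mathbb P^2\times\mathbb P^2$ via $\sigma\times\mathrm{id}_{\mathbb P^2}$ merely permutes the $e_i$ and so preserves the isomorphism class of $(C',\eta')$, while $h$ is $\Aut_{\sigma}S$-equivariant by its very definition. From (\ref{Main CD}) one also reads that $C'=\sigma(C)$, so the sextic model supplied by $M$ is exactly the plane model $C'$ and the normalization map is precisely $\nu:C\to C'$.

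The crucial, and most delicate, point is the compatibility $\nu^*\eta'=\eta$. The birational equivalence $\phi=(\sigma\times\mathrm{id}_{\mathbb P^2})\circ\epsilon$ restricts to an isomorphism between the smooth loci of $u|Q$ and $p_1|Q'$ and matches a general smooth conic fiber of one to a smooth conic fiber of the other. Consequently the \'etale double cover of $C'$ determined by $p_1|Q'$ pulls back along $\nu$ to the \'etale double cover of $C$ determined by $u|Q$, which is exactly $(C,\eta)$, yielding the required identity. The main obstacle here is to verify that the exceptional loci of $\epsilon_1$ and $\epsilon_2$, sitting over the $E_i$ and the $e_i$, contribute no extra ramification; this I would extract from the explicit description of these maps recorded right after (\ref{Main CD}), in which each exceptional divisor is shown to contract onto a locus already lying over the nodes of $C'$.

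Finally, for the inverse direction, given a general $(C',\eta')\in\mathcal R^{ps}_6$ I would appeal to the standard conic bundle construction of \cite{FV} Section 2: the $2$-torsion $\eta'$ gives rise to an \'etale double cover of $C'$ whose Prym data produce a bidegree-$(2,2)$ divisor $Q'\in|\mathcal I^2_{\mathsf e}(2,2)|$ with $p_1|Q'$ a conic bundle of discriminant $C'$ and double cover $\eta'$. Applying $h^{-1}$ yields $Q\in|\mathcal O_{\mathbb P}(2)|$ whose class in $\widetilde{\mathcal R}^{cb}_6$ depends only on $(C',\eta')$ thanks to $\Aut_{\sigma}S$-equivariance. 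By construction the two assignments are mutually inverse on the open loci of general data, and since $\dim\widetilde{\mathcal R}^{cb}_6=\dim\mathcal R^{ps}_6=15$, the map $\alpha$ is birational, as required.
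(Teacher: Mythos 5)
Your proposal is correct and follows essentially the same route as the paper: both use the isomorphism $h$ from Theorem \ref{Translation theorem} and diagram (\ref{Main CD}) to pass between $Q\in|\mathcal O_{\mathbb P}(2)|$ and $Q'\in|\mathcal I^2_{\mathsf e}(2,2)|$, read off $(C',\eta')$ from the conic bundle $u'|Q'$ (the paper phrases the $2$-torsion $\eta'$ as the cokernel of the symmetric matrix $A$, via Beauville's determinantal description, which is equivalent to your discriminant double cover), and invert by recovering $Q'$ from the resolution of $\eta'$ and pulling back through $\phi$. The only cosmetic difference is that you add an explicit dimension count at the end, whereas the paper concludes directly from the two constructions being mutually inverse.
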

The proof relies on \cite{FV} and \cite{B2}. We use diagram (\ref{Main CD}) and the isomorphism 
$$
h: H^0(\mathcal O_{\mathbb P}(2)) \to H^0(\mathcal I^2_{\mathsf e}(2,2)),
$$
defined in (\ref{h}). We assume that $Q' \in \vert \mathcal I^2_{\mathsf e}(2,2) \vert$ is \it general \rm so that $\Sing Q'$ consists of $4$ nodes at $\mathsf e$. Let $(z_1:z_2:z_3)$ be coordinates in $\mathbb P^2$ then the equation of $Q'$ is
\begin{equation} \label{equation cb}
\begin{pmatrix}{z_1 \ \ z_2 \ \ z_3}\end{pmatrix} {\huge A} \begin{pmatrix} z_1 \\ z_2 \\ z_3 \end{pmatrix} = 0,
\end{equation} 
where $\huge A$ is a symmetric matrix of quadratic forms. Let $u': \mathbb P^2 \times \mathbb P^2 \to \mathbb P^2$ be the first projection then $u' \vert Q': Q' \to \mathbb P^2$ is a conic bundle. 
Since $Q'$ is general every fibre of $u' \vert Q'$ is a conic of $\rank \geq 2$ and the curve $C' := \lbrace \det A = 0 \rbrace$  is a $4$-nodal sextic, singular at $u'(\mathsf e)$. For such a general $Q'$, the matrix $\huge A$ defines an exact sequence of vector bundles
\begin{equation} \label{exact sequence}
0 \to \mathcal O_{\mathbb P^2}(-4)^{\oplus 3} \stackrel{A} \to \mathcal O_{\mathbb P^2}(-2)^{\oplus 3} \to \eta' \to 0
\end{equation}
where $\eta'$ is a non  zero $2$-torsion element of $\Pic C'$. $(C',\eta')$ is a genus $6$ Prym plane sextic. Conversely a general Prym plane sextic $(C', \eta')$ uniquely defines a resolution of $\eta'$ as above up to isomorphisms. See \cite{B2} thm. B and \cite{FV} 1. The resolution of $\eta'$ defines a conic bundle $Q' \in \vert \mathcal I^2_{\mathsf e}(2,2) \vert$ as in (\ref{equation cb}), up to the action of $\PGL(3)$ on $\huge A$.  \begin{remark} \rm Let $s': C' \to \mathbb P^2 \times \mathbb P^2$ be the Steiner map of $u' \vert Q'$, $s'$ is an embedding, \cite{FV} 2. Let $M := s'^{*}\mathcal O_{\mathbb P^2 \times \mathbb P^2}(1,0)$, the sequence  (\ref{exact sequence}) implies $\label{line bundles} \mathcal O_{\mathbb P^2 \times \mathbb P^2}(0,2) \cong \eta' \otimes M^{\otimes 2}$.
   \end{remark}
 \begin{proof}[\sf Proof of theorem (\ref{BIRATIONAL})]  From diagram (\ref{Main CD}) we have the commutative diagram 
\begin{equation}
\begin{tikzcd}
{\mathbb P^2 \times \mathbb P^2} \arrow[r, dashed, "\phi"] \arrow[d, "u'"]
& {\mathbb P}  \arrow[d, "u"] \\
{\mathbb P^2} \arrow[r, dashed, "\sigma^{-1}"].
& S
\end{tikzcd}
\end{equation}
In it $u'$ is the projection onto the first factor. Moreover we have  $\phi = \epsilon \circ  (\sigma^{-1} \times id_{\mathbb P^2})$ as in (\ref{QUATTRO}) and  $\epsilon = \epsilon_2 \circ \epsilon_1^{-1}$ as in (\ref{Main CD}). In particular the strict transform by $\phi$
of a \it general \rm element $Q \in \vert \mathcal O_{\mathbb P}(2) \vert$ is obtained applying to $Q$ the projective isomorphism
\begin{equation}
\phi': \vert \mathcal O_{\mathbb P}(2) \vert \to \vert \mathcal I^2_{\mathsf e}(2,2) \vert,
\end{equation}
associated to the isomorphism $h^{-1}: H^0(\mathcal O_{\mathbb P}(2)) \to H^0(\mathcal I^2_{\mathsf e}(2,2))$ of theorem \ref{Translation theorem}.
Let $Q' = \phi'(Q)$ be the strict transform of $Q$, then $\phi \vert Q': Q' \to Q$ is birational. Moreover the conic bundle $u' \vert Q'$ is birationally equivalent to $u \vert Q$ and fits in the diagram
diagram  
$$
\begin{tikzcd}
{Q'} \arrow[r, dashed, "\phi \vert Q'"] \arrow[d, "u'"]
& {Q}  \arrow[d, "u"] \\
{\mathbb P^2} \arrow[r, dashed, "\sigma^{-1}"]
& S
\end{tikzcd}
$$
Now, following a typical method under these circumstances, we use the above discussion to define  rational maps $\alpha: \widetilde{\mathcal R}^{cb}_6 \dasharrow \mathcal R^{ps}_6$ and $\beta: \mathcal R^{ps}_6 \dasharrow \widetilde{\mathcal R}^{cb}_6$ which are inverse to each other. Since $\widetilde {\mathcal R}^{cb}_6$ and $\mathcal R^{ps}_6$ are irreducible, it follows that these spaces are birational. \par
Let $(C, \eta, s)$ be general and $Q \in \vert \mathcal O_{\mathbb P}(2) \vert$ such that the discriminant of $u \vert Q$ is $(C, \eta)$ and $s$ is the Steiner map. Consider $Q' = \phi'(Q)$ then the discriminant of $u' \vert Q'$  is a genus $6$ Prym sextic $(C', \eta')$ such that the following diagram is commutative:
$$
\begin{CD}
{s'(C}) @<{\phi^{-1} }<< {s(C)} \\
@A{s'}AA @AsAA \\
{C'} @<{\sigma}<< C. \end{CD}
$$
So we have $C' = \sigma_*C$ and $\sigma^* \eta' \cong \eta$. The construction of $(C', \eta')$ from $(C, \eta, s)$ defines a rational map 
\begin{equation} \label{ALFA} \alpha: \widetilde{\mathcal R}^{cb}_6 \to \mathcal R^{ps}_6. \end{equation}
The inverse construction is clear: from $(C', \eta')$ one retrieves the sequence (\ref{exact sequence}) and hence $Q'$. The strict transform of $Q'$ by $\phi$ is $Q$ and $Q$ defines $(C, \eta, s)$. This defines $\beta = \alpha^{-1}$. \end{proof}
  The next rationality result follows immediately  from the latter theorem and (\ref{NEWRATIONAL}).
 \begin{theorem}[Theorem A] The moduli space of $4$-nodal Prym sextics is rational. \end{theorem}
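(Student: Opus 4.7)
The plan is to observe that rationality is a birational invariant, so it suffices to chain together the two rationality/birationality results that have already been established in the excerpt. Specifically, Theorem \ref{BIRATIONAL} produces a birational map
\[
\alpha : \widetilde{\mathcal R}^{cb}_6 \dashrightarrow \mathcal R^{ps}_6,
\]
while Theorem \ref{NEWRATIONAL} asserts that $\widetilde{\mathcal R}^{cb}_6$ is rational. Composing a birational parametrization $\mathbb P^N \dashrightarrow \widetilde{\mathcal R}^{cb}_6$ with $\alpha$ therefore yields a birational map $\mathbb P^N \dashrightarrow \mathcal R^{ps}_6$, which is exactly the statement to prove. So the proof is essentially a one-line deduction.

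The only thing worth spelling out carefully is why one may indeed invoke $\alpha$ as a birational equivalence of the \emph{moduli spaces}, rather than merely of parameter spaces. For this I would remind the reader of the dictionary set up in the excerpt: a general point of $\widetilde{\mathcal R}^{cb}_6$ corresponds to an $\Aut_{\sigma}S$-orbit of a $4$-tuple $(C,\eta,s,M)$, and the construction preceding (\ref{ALFA}) attaches to such a datum a genus $6$ Prym plane sextic $(C',\eta') = (\sigma_\ast C, (\sigma^\ast)^{-1}\eta)$, yielding a well defined dominant rational map to $\mathcal R^{ps}_6$. The inverse construction in the proof of Theorem \ref{BIRATIONAL} recovers $(C,\eta,s)$ (and $M$) from $(C',\eta')$ via the resolution (\ref{exact sequence}) together with the map $\phi$ in (\ref{QUATTRO}), so $\alpha$ is generically injective between irreducible varieties of the same dimension, hence birational.

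There is really no new obstacle here; the substantive work was already carried out in Theorems \ref{BIRATIONAL} and \ref{NEWRATIONAL}, whose proofs rest on the linear isomorphism $h$ of Theorem \ref{Translation theorem} and on Shepherd-Barron's rationality of quotients of linear representations of $\mathfrak S_4 \cong \Aut_{\sigma}S$. Once both are in hand, the conclusion is a formal consequence of the fact that rationality descends along birational equivalences. I would therefore simply state: \emph{by Theorem \ref{BIRATIONAL}, $\mathcal R^{ps}_6$ is birational to $\widetilde{\mathcal R}^{cb}_6$, which is rational by Theorem \ref{NEWRATIONAL}; hence $\mathcal R^{ps}_6$ is rational.}
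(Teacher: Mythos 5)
Your proposal is correct and is exactly the paper's own argument: the author deduces Theorem A "immediately" by combining the birationality $\alpha:\widetilde{\mathcal R}^{cb}_6 \dashrightarrow \mathcal R^{ps}_6$ of Theorem (\ref{BIRATIONAL}) with the rationality of $\widetilde{\mathcal R}^{cb}_6$ from Theorem (\ref{NEWRATIONAL}). Your additional remarks on why $\alpha$ is a birational equivalence of moduli spaces are consistent with the constructions already carried out in the paper.
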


\section{Moduli of Igusa pencils and of $E_6$-quartics} 
In this  section we study two new moduli spaces related to the previous ones,  the \it moduli of Igusa pencils \rm and the \it  moduli of $E_6$-quartic threefolds\rm. 
Both are defined as GIT quotients and we prove that both are rational. Then we discuss some rational maps between these four spaces.   \par As we know an  Igusa pencil is generated by  the Igusa quartic $B$, and by a double quadric. Therefore we can assume that its equation is 
$ \lambda a^2 + \mu b = 0$, where $b$ is the equation of $B$ and $a$ is a non zero quadratic form. Let
\begin{equation} \label{cone}
\mathbb V \subset   \mathbb P^{69} := \vert \mathcal O_{\mathbb P^4}(4) \vert
\end{equation}
be the union of the lines which are Igusa pencils as above. Then $\mathbb V$ is a cone, of vertex the element $B$, over the $2$-Veronese embedding of $\mathbb P^{14} := \vert \mathcal O_{\mathbb P^4}(2) \vert$. The latter is just
\begin{equation}
\mathbb V_2 := \lbrace 2A, \ A  \ \in \ \mathbb P^{14} \rbrace \subset \mathbb V.
\end{equation}
Notice also that each Igusa pencil $P$ contains a unique double quadric $2A$, otherwise each element of $P$ would be union of two quadrics. Let $P = \lbrace \lambda a^2 + \mu b = 0 \rbrace$ be \it general \rm and $X$ general in $P$. Then $\Sing X$ is the transversal intersection $A \cap B$ and consists of $30$ ordinary double points, distributed in pairs on the $15$ lines whose union is $\Sing B$. 
\begin{proposition} No element of $P \setminus \lbrace B \rbrace$ is projectively equivalent to $B$. \end{proposition}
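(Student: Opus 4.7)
The plan is to separate $X$ from $B$ by the dimension of the singular locus. Since $\dim \Sing B = 1$ (the fifteen lines forming the Cremona--Richmond configuration), it suffices to show that every $X \in P \setminus \lbrace B \rbrace$ either satisfies $\dim \Sing X \le 0$, or else is distinguished from $B$ by a more elementary structural invariant.

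Write $X = \lambda a^2 + \mu b$ with $(\lambda, \mu) \ne (0,0)$. The case $\lambda = 0$ is $X = B$, excluded. The case $\mu = 0$ gives $X = \lambda a^2$, whose defining polynomial is reducible (a square), while $b$ is irreducible ($B$ being an integral hypersurface); so these cannot be $\PGL(5)$-equivalent. In the remaining case $\lambda \mu \ne 0$ I will show $\Sing X$ is a finite set. From $\nabla X = 2\lambda a\,\nabla a + \mu\,\nabla b$, a point $x \in \Sing X$ falls into one of two cases. If $a(x) = 0$, then $\nabla X(x) = \mu\,\nabla b(x) = 0$ forces $\nabla b(x) = 0$, and $X(x) = \mu b(x) = 0$ forces $b(x) = 0$; hence $x \in A \cap \Sing B$, which for $A$ general is the set of $30$ nodes. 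If $a(x) \ne 0$, then $\nabla X(x) = 0$ forces $\nabla b(x) = -\tfrac{2\lambda a(x)}{\mu}\nabla a(x)$, so $\nabla a(x)$ and $\nabla b(x)$ are proportional and both nonzero (since $\Sing A = \emptyset$ for general smooth $A$); thus $x$ lies in the determinantal locus $\Gamma := \lbrace x \in \mathbb P^4 \setminus \Sing B : \mathrm{rank}\,[\nabla a(x);\nabla b(x)] \le 1 \rbrace$.

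It remains to verify $\dim \Gamma = 0$ for $A$ general. The rank-$1$ condition on a $2 \times 5$ matrix has expected codimension $(2-1)(5-1) = 4$ in $\mathbb P^4$, and this expected dimension is attained by a standard incidence argument on the universal parallel-gradient locus $\widetilde \Gamma \subset \vert \mathcal O_{\mathbb P^4}(2) \vert \times \mathbb P^4$: at each $x$ with $\nabla b(x) \ne 0$, the fibre over $x$ is cut out by the linear condition that $a \mapsto \nabla a(x) = 2Ax$ land in the line $\mathbb C \cdot \nabla b(x) \subset \mathbb C^5$, which is codimension $4$ in $\vert \mathcal O_{\mathbb P^4}(2) \vert$ since the linear map $A \mapsto 2Ax$ is surjective onto $\mathbb C^5$. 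Hence $\dim \widetilde\Gamma = 14$, and the projection to $\vert \mathcal O_{\mathbb P^4}(2) \vert$ is generically finite, giving $\dim \Gamma = 0$ for $A$ general. Combining the two cases, $\dim \Sing X = 0 < 1 = \dim \Sing B$, and $X$ is not projectively equivalent to $B$.

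The main obstacle is the finiteness of $\Gamma$: the locus $\Sing B$ is automatically contained in the naive rank-$1$ stratum because $\nabla b$ vanishes there, so the content of the argument is that the ``accidental'' parallel-gradient locus in $\mathbb P^4 \setminus \Sing B$ remains $0$-dimensional for general $A$. The incidence/dimension argument above takes care of this provided genericity of $P$ is imposed in terms of the quadric $A$, which is precisely the hypothesis of the proposition.
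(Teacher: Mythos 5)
Your proof is correct, but it takes a genuinely different route from the paper's. Both arguments ultimately separate $B$ from the other members of the pencil by the dimension of the singular locus (plus non-reducedness for the double quadric $2A$), but they establish the key finiteness claim in different ways. The paper pulls the pencil back along the Stein factorization $\overline t\colon \overline{\mathbb P}\to\mathbb P^4$ of the tautological map, observes that `every member of $\overline t^*P$ other than $\overline B$ has isolated singularities' is an open condition on $P$, and shows this condition is non-empty by specializing to the unique $\mathfrak S_6$-invariant pencil $\lambda s_2^2+\mu s_4=0$, citing van der Geer and Cheltsov--Kuznetsov--Shramov for the singularities of its members. You instead work directly in $\mathbb P^4$: the Euler relation reduces $\Sing X$ to the union of $A\cap\Sing B$ (thirty points) and the parallel-gradient locus $\Gamma$, and the incidence-variety count (linear $\mathbb P^{10}$-fibres over $\mathbb P^4\setminus\Sing B$, hence $\dim\widetilde\Gamma=14$) shows $\Gamma$ is finite for general $A$. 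Your version is more self-contained, gives an explicit genericity condition purely on the quadric $A$, and the inclusion $\Sing X\subset(A\cap\Sing B)\cup\Gamma$ is uniform over all $(\lambda,\mu)$ with $\lambda\mu\neq0$; the paper's version fits its ambient setup on $\overline{\mathbb P}$ (its condition (2), reducedness of the ramification of $u$ at $\Sing\overline B$, is tailored to that) and trades the dimension count for a concrete witness pencil. One minor imprecision: from $\dim\widetilde\Gamma=14$ you may only conclude that the projection to $\vert\mathcal O_{\mathbb P^4}(2)\vert$ is either dominant with finite general fibre or not dominant with empty general fibre; either alternative gives $\Gamma$ finite for general $A$, so your conclusion stands.
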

\begin{proof}  Let $t: \mathbb P \to \mathbb P^4$ be our usual tautological morphism and $\overline t: \overline{\mathbb P} \to \mathbb P^4$ its Stein factorization as in (9). Then the pencil $\overline t^*P$
is generated by $\overline B := \overline t^{-1}(B)$ and $\overline A = \overline t^{-1}(A)$. For a general $P$ the finite double cover $\overline t: \overline A \to A$ is branched on a 
surface $A \cap B$ which is singular at the finite set $\Sing B \cap A$. Hence $\overline A$ has isolated singularities.   Let $u: \overline {\mathbb P} \dasharrow \mathbb P^1$ be the rational map defined by 
$\overline t^*P$, we consider on $P$  the following open condition: 
  \begin{enumerate} \item every $\overline X \in \overline t^* P \setminus \lbrace \overline B \rbrace$ has isolated singularities, \item the ramification scheme of $u$ is reduced at $\Sing \overline B$. \end{enumerate}  
We claim that this condition is not empty. This easily implies the statement. To prove the claim a well known pencil is available. This is the unique pencil of
$\frak S_6$-invariant quartic forms: $\lbrace s_1 =  \lambda s^2_2 + \mu s_4 = 0 \rbrace$. This satisfes (1), see \cite{VdG} theorem 4.1 and \cite{CKS} theorem 3.3. The proof
of (2) is a straightforward computation. \end{proof}
\begin{proposition} Two general Igusa pencils $P_1, P_2$ are projectively equivalent iff $$\exists \  \psi \in \Aut B \ \vert \ P_2 = \psi(P_1). $$
In the same way two general $X_1, X_2 \in \mathbb V$ are projectively equivalent iff $$\exists \  \psi \in \Aut B \ \vert \ {  X_2 = \psi(X_1). }$$
 \end{proposition}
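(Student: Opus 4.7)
The ``if'' direction is immediate in both cases since $\Aut B \subset \Aut \mathbb P^4$. The plan for the ``only if'' direction is to settle the pencil statement first, then reduce the $E_6$-quartic statement to it.

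For the pencil statement, suppose $g \in \Aut \mathbb P^4$ satisfies $g(P_1) = P_2$. By the previous proposition applied to $P_2$, the Igusa quartic $B$ is the unique element of $P_2$ projectively equivalent to $B$. Since $g(B) \in P_2$ is manifestly projectively equivalent to $B$, we conclude $g(B) = B$, so $g \in \Aut B$.

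For the $E_6$-quartic statement, let $g(X_1) = X_2$ and let $P_i := \langle X_i, B\rangle$ be the unique Igusa pencil containing $X_i$. It suffices to show that $g(P_1) = P_2$, for then the pencil statement applies. The plan is to characterize $P_X$ intrinsically from $X$: every element $\lambda a^2 + \mu b$ of $P_X$ is singular at all $30$ points of $\Sing X = A \cap \Sing B$, since at each such point both $a$ and $b$ vanish together with their gradients. Hence $P_X$ is contained in the linear system $\mathcal L$ of quartic threefolds singular along $\Sing X$. If one proves that $\dim \mathcal L = 1$ for $X$ general, then $\mathcal L = P_X$ is intrinsic, and the equality $g(\Sing X_1) = \Sing X_2$ forces $g(P_1) = P_2$, reducing us to the pencil case.

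The main obstacle is the dimension count: one must verify that the $30$ nodes of a general $X$ impose exactly $68$ independent conditions on the $70$-dimensional space $H^0(\mathcal O_{\mathbb P^4}(4))$. A slicker alternative, which avoids this count, is to recover $\Sing B$ directly from $\Sing X$: the $30$ nodes lie two by two on the $15$ lines of the Cremona--Richmond configuration $\Sing B$, and for general $X$ no other line of $\mathbb P^4$ meets $\Sing X$ in more than one point; so the $15$ lines are intrinsic to $\Sing X$. Consequently $g$ preserves the configuration $\Sing B$, and the well-known rigidity of the $(15_3,15_3)$-Cremona--Richmond configuration --- whose projective stabilizer coincides with $\Aut B \cong \frak S_6$ --- forces $g \in \Aut B$.
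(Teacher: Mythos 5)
The first half of your argument (the pencil case) is exactly the paper's proof: $\psi(B)\in P_2$ is projectively equivalent to $B$, so by the preceding proposition $\psi(B)=B$. For the quartic case your reduction strategy --- show $g(P_1)=P_2$ and invoke the pencil case --- is also the paper's strategy, but neither of your two proposed ways of completing it works. The second, ``slicker'' alternative contains a false step: it is not true that for general $X$ no line other than the $15$ lines of $\Sing B$ meets $\Sing X$ in more than one point. Any two of the $30$ nodes span a line meeting $\Sing X$ in at least two points, so there are $\binom{30}{2}-15=420$ further such lines, and the $15$ lines of the Cremona--Richmond configuration each carry only two nodes; hence they are in no way distinguished inside $\Sing X$ by this incidence criterion, and the configuration $\Sing B$ is not recovered from $\Sing X$ as claimed. (The appeal to the projective stabilizer of the $(15_3,15_3)$-configuration being exactly $\frak S_6$ is also asserted without justification, but that is secondary.)

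Your first alternative is the right idea aimed at the wrong linear system, and you leave its key point --- the dimension count --- explicitly unproved. Note that $30$ nodes impose a priori $150$ conditions on the $70$-dimensional space of quartic forms, so the statement $\dim\mathcal L=1$ amounts to a drastic failure of independence that is not obvious and would itself require an argument. The paper instead works with \emph{quadrics}: it proves $h^0(\mathcal I_{\Sing X_i}(2))=1$, i.e.\ $A_i$ is the \emph{unique} quadric through the $30$ nodes, via the exact sequence
\begin{equation*}
0 \to \mathcal I_{\Sing B}(2) \to \mathcal I_{\Sing X_i}(2) \to \mathcal I_{\Sing X_i\vert \Sing B}(2)\to 0,
\end{equation*}
using that $\mathcal I_{\Sing X_i \vert \Sing B}(2)\cong\mathcal O_{\Sing B}$ (as $\Sing X_i$ is cut on $\Sing B$ by the quadric $A_i$) and that no quadric contains $\Sing B$ (proved by restricting to a tangent hyperplane section of $B$, a Kummer surface, and using its $16_6$ configuration). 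Then $\psi(\Sing X_1)=\Sing X_2$ forces $\psi(A_1)=A_2$, hence $\psi(P_1)=\langle \psi(X_1),2\psi(A_1)\rangle=P_2$, and the pencil case concludes. You should replace both of your completions by this uniqueness-of-the-quadric argument.
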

\begin{proof} Let $P_1, P_2$ be projectively equivalent then $P_2 = \psi(P_1)$ for
some $\psi \in \Aut \mathbb P^4$. By the previous proposition no element of $P_2 \setminus \lbrace B \rbrace$ is projectively equivalent to $B$. Hence it follows $\psi(B) = B$ and $\psi \in \Aut B$. The converse is obvious. In the same way let $X_1, X_2 \in \mathbb V$ be general. Then $X_i$, ($i = 1,2$),  belongs to a unique Igusa pencil contained in the cone $\mathbb V$, say $P_i = \lbrace \lambda a^2_i + \mu b = 0 \rbrace$. Moreover $\Sing X_i$ is the transversal intersection $\Sing B \cap A_i$, where $A_i = \lbrace a_i = 0 \rbrace$. We claim that $h^0(\mathcal I_{\Sing X_i}(2)) = 1$. Under this
claim assume that $\psi$ is a projective isomorphism such that $\psi(X_1) = X_2$. Then it follows $\psi (\Sing X_1) = \Sing X_2$ and $\psi(A_1) = A_2$. In particular this also implies that
$\psi(P_1) = P_2$. Hence, by the former proof, $\psi \in \Aut B$. This implies the statement up to proving the claim. For this consider the standard exact sequence of ideal sheaves
\begin{equation} \label{IDEALS}
0 \to \mathcal I_{\Sing B}(2) \to \mathcal I_{\Sing X_i}(2) \to \mathcal I_{\Sing X \vert \Sing B}(2)\to 0.
\end{equation}
Then observe that $\mathcal I_{\Sing X \vert \Sing B}(2) \cong \mathcal O_{\Sing B}$    because $\Sing X$ is a quadratic section of $\Sing B$, namely by the quadric $A$.
Moreover notice that no quadric contains $\Sing B$.   Indeed a general tangent hyperplane section $Y =  \mathbb P^3 \cap B$ is a general Kummer surface, \cite{VdG}. If $\Sing B$ is in a quadric then $Z = Y \cap \Sing B$ is in a quadric $Q$ of $\mathbb P^3$ and   $\Sing Y = Z \cup \lbrace o \rbrace$. By the $16_6$ configuration of $\Sing Y$, $o$ belongs to a trope $T$ that is a conic through $6$ nodes. Then $T \subset  Q$ and $Y \cap Q$ contains the $16$ tropes: a contradiction.   Then the claim follows passing to the associated long exact sequence    of (\ref{IDEALS}). From this, since
$h^0(\mathcal I_{\Sing B}(2)) = 0$, it follows $h^0(\mathcal O_{\Sing X}(2)) = h^0(\mathcal O_{\Sing B}) = 1$.   \end{proof}
 Now recall that $\frak S_6 \cong \Aut B \subset \Aut \mathbb P^4$ and consider the induced action  of it on $\mathbb V_2$. The latter proposition motivates the next definitions.
\begin{definition} The moduli space of Igusa pencils is the GIT quotient
\begin{equation} \mathcal P^I := \mathbb V_2 \dslash { \ \frak S_6}. \end{equation}
\end{definition}
 \begin{definition} The moduli space of $E_6$-quartic threefolds is the GIT quotient
\begin{equation}  \mathcal X := \mathbb V \dslash{ \frak S_6}. \end{equation}
\end{definition}
One can deduce the rationality of $\mathcal P^I$ from the geometry of the Segre cubic primal $B^* \subset \mathbb P^{4*}$, the dual of $B$ appearing in theorem (\ref{Segreprimal}). 
A proof can be given as follows.
\begin{theorem} \label{RATIGUSA}The moduli space $\mathcal P^I$ of Igusa pencils is rational. \end{theorem}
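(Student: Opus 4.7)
The plan is to identify $\mathcal{P}^I$ with a GIT quotient of a projective $\frak S_6$-representation and to apply the slice-and-project technique of Shepherd--Barron \cite{SB}, in the same spirit as the preceding proof of rationality of $\widetilde{\mathcal R}^{cb}_6$. Using $\mathbb P^4 = \{s_1 = 0\} \subset \mathbb P^5$ with coordinates $z_1, \dots, z_6$, restriction of quadratic forms gives an $\frak S_6$-equivariant identification
\[
H^0(\mathcal O_{\mathbb P^4}(2)) \;\cong\; \Sim^2\mathbb C^6 \big/ s_1\!\cdot\!\mathbb C^6 \;\cong\; \Sim^2 U,
\]
with $U = \mathbb C^6/\langle s_1\rangle$ the standard $5$-dimensional representation. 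Since $\mathbb V_2 \cong \mathbb P(\Sim^2 U) = \mathbb P^{14}$, the task becomes showing $\mathbb P(\Sim^2 U) \dslash \frak S_6$ is rational.

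First I would produce a natural $6$-dimensional $\frak S_6$-stable linear subspace $W \subset \Sim^2 U$, namely the span of the six classes $[z_1^2], \dots, [z_6^2]$, which is isomorphic to the permutation representation $\mathbb C^6$. The $\frak S_6$-equivariant map $\mathbb C^6 \to \Sim^2 U$ sending $e_i \mapsto [z_i^2]$ is injective by a direct check: any relation $\sum c_i z_i^2 = s_1\cdot\sum d_j z_j$ forces $d_i = c_i$ from the $z_i^2$-coefficients and $d_i + d_j = 0$ for all $i \ne j$ from the $z_iz_j$-coefficients, whence $c_i = 0$. Setting $V := \Sim^2 U$ and dualizing, the inclusion $W \subset V$ then gives an $\frak S_6$-equivariant linear projection $\pi: \mathbb P(V^*) \dasharrow \mathbb P(W^*)$ from a linear center of dimension $8$; after blowing it up, we obtain an $\frak S_6$-linearized $\mathbb P^9$-bundle $p: \mathbb P_\sigma \to \mathbb P(W^*) = \mathbb P^5$, exactly mirroring the setup of \cite{SB} theorem 6 used in the earlier proof.

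Next I would descend: by the no-name argument for equivariant projective bundles, $p$ yields birationally a $\mathbb P^9$-bundle over $\mathbb P(W^*) \dslash \frak S_6$, and it suffices to prove the base quotient is rational. But $\mathbb P(W^*) = \mathbb P^5$ carries the standard permutation action of $\frak S_6$, so the fundamental theorem of symmetric functions gives
\[
\mathbb P(W^*) \dslash \frak S_6 \;=\; \proj\mathbb C[e_1, \dots, e_6] \;=\; \mathbb P(1,2,3,4,5,6),
\]
a weighted projective space, which is rational (the affine chart $\{e_1 \ne 0\}$ is already $\mathbb A^5$). A $\mathbb P^9$-bundle over a rational base is rational, and the theorem follows. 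The tie-in with the Segre primal alluded to in the statement appears here: the same standard $5$-dimensional action of $\frak S_6$ on $\mathbb P^{4*}$ singling out $B^*$ is the source of the rationality of the slice quotient $\mathbb P^5 \dslash \frak S_6$.

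The main obstacle I anticipate is the descent step: verifying that the $\frak S_6$-linearized $\mathbb P^9$-bundle over $\mathbb P(W^*)$ becomes birationally trivial after quotienting, so that the total space of the blow-up is birational to $\bigl(\mathbb P(W^*) \dslash \frak S_6\bigr) \times \mathbb P^9$. Following Shepherd--Barron, this reduces to choosing an appropriate $\frak S_6$-linearization of the relative $\mathcal O(1)$; the argument is standard in a linear-projection setup of this kind and should carry over verbatim from the $\widetilde{\mathcal R}^{cb}_6$ case.
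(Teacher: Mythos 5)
Your proof is correct and follows essentially the same route as the paper's: identify $\mathcal P^I$ with $\vert \mathcal O_{\mathbb P^4}(2)\vert \dslash \frak S_6$, project equivariantly onto an $\frak S_6$-stable linear slice whose projective quotient is a rational weighted projective space by the symmetric functions theorem, and descend the complementary projective bundle via the Shepherd--Barron/no-name argument. The only difference is the choice of slice: you use the $6$-dimensional permutation representation spanned by the classes $[z_i^2]$ (base quotient $\mathbb P(1,2,3,4,5,6)$, fibre $\mathbb P^9$), whereas the paper uses the $5$-dimensional standard representation realized as the polar quadrics of the Segre cubic $B^*$ (base quotient $\mathbb P(2,3,4,5,6)$, fibre $\mathbb P^{10}$); both choices work.
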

\begin{proof}  In $\mathbb P^5$ with coordinates $(z_1: \dots z_6)$ let $\mathbb P^4$ be the hyperplane $\lbrace \sum_{i = 1 \dots 6} z_i = 0 \rbrace$, we fix the standard
representation of $\frak S_6$ on $\mathbb P^4$, acting on $\mathbb P^4$ by permutation of the coordinates of $\mathbb P^5$.  The equation of $B$ in $\mathbb P^4$ is
$b = \sum_{i  = 1 \dots 6} z_i^4 - \frac 14(\sum_{i = 1 \dots 6} z_i^2)^2$ and $\frak S_6$ acts as $\Aut B$ on $B$, see the Introduction. As is well known the unique $\frak S_6$-invariant cubic threefold in $\mathbb P^4$ is defined by
\begin{equation}
z_1^3 + \dots + z_6^3 = 0
\end{equation}
and it is the Segre cubic primal $B^*$, up to projective equivalence.  We have to prove the rationality of $\vert \mathcal O_{\mathbb P^4}(2) \vert \dslash \frak S_6$. To this purpose we consider  $B^*$ and the standard exact sequence
  \begin{equation}
0 \to \mathcal I_{\Sing B^*}(2) \to \mathcal O_{\mathbb P^4}(2) \to \mathcal O_{\Sing B^*}(2) \to  0,
\end{equation}
$\mathcal I_{\Sing B^*}$ being the ideal sheaf of $\Sing B^*$. As is well known its long exact sequence is
$$
0 \to H^0(\mathcal I_{\Sing B^*}(2))  \to H^0(\mathcal O_{\mathbb P^4}(2)) \to  H^0(\mathcal O_{\Sing B^*}(2)) \to 0.
$$
Let $I := H^0(\mathcal I_{\Sing B^*}(2))$, since $\dim I = 5$ it follows that $\mathbb P(I)$ is the linear system of polar quadrics of $B^*$. This is the family of quadrics $Q_t$ such
that $t = (t_1: \dots: t_6) \in \mathbb P^4$ and $Q_t$ is the restriction to $\mathbb P^4$ of the polar quadric at $t$ to the cubic $ \lbrace \sum_{i = 1 \dots 6} z_i^3 = 0 \rbrace$ of $\mathbb P^5$.
In other words
$$
Q_t = \lbrace z_1+ \dots + z_6 = 0 \rbrace \cap \lbrace t_1z_1^2 + \dots + t_6z_6^2 = 0 \rbrace.
$$
Clearly $\frak S_6$ acts on $\mathbb P(I)$, moreover let $\sigma \in \frak S_6$ then $\sigma(Q_t) = Q_{\sigma(t)}$. This implies that these actions of $\frak S_6$ on $\mathbb P^4$ and $\mathbb P(I)$
are isomorphic. Hence $\mathbb P(I) \dslash \frak S_6$ is rational, since it is isomorphic to $\mathbb P^4 \dslash \frak S_6$. Indeed the latter is the weighted projective space $\mathbb P(2,3,4,5,6)$, 
 \cite{D2} p.281 and 9.1. Let us prove the rationality of $\mathcal P^I$. Of course we can  fix a direct sum decomposition $W := I\oplus J$ of the previous representation $W := H^0(\mathcal O_{\mathbb P^4}(2))$ of the group $\frak S_6$, \cite{FH} 1.5. We consider the projection map $p: \mathbb P(W) \to \mathbb P(I)$ and the blowing up $j: \mathbb P(W)' \to \mathbb P(W)$ of $\mathbb P(J)$.  Then $p$ lifts to a projective bundle $p': \mathbb P(W)' \to \mathbb P(I)$. Since the action of $\frak S_6$ on $\mathbb P(W)'$ is linear and equivariant, $p'$ descends  to a projective bundle over a non empty open set $U \subset \mathbb P(I^*) \dslash{ \frak S_6}$, say 
$$
\overline p: \mathbb P(W)'  \dslash{ \frak S_6} \to  U \subset \mathbb P(I^*) \dslash{ \frak S_6}.
$$
See \cite{MF} 7.1 and \cite{SB} 7. Since $U$ is rational we have $\vert \mathcal O_{\mathbb P^4}(2) \vert \dslash{ \frak S_6} \cong \mathbb P(W)' \dslash \frak S_6
 \cong \mathbb P^{10} \times U$.
  \end{proof}
 To {  conclude} this section let us define the  dominant rational map
\begin{equation}
f:  \mathcal X  \dasharrow \mathcal P^{I} \label{FIBRATION}.
\end{equation}
Let $X \in \mathbb V$ be general and $P$ the unique Igusa pencil containing it. Denoting by $[X]$ and $[P]$ their moduli, we set by definition
$f([X]) = [P]$. Then the fibre of $f$ at $[P]$ is the image of $P$ in $ \mathcal X $ via the moduli map.  Therefore \it $f$ is a fibration  in rational curves. \rm    
 \begin{theorem}[Theorem C] \label {THEOREM f}  The moduli space $ \mathcal X $ is rational. \end{theorem}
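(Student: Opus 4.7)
The strategy is to reduce to Theorem \ref{RATIGUSA} by showing that the fibration $f : \mathcal{X} \dasharrow \mathcal{P}^I$ of (\ref{FIBRATION}) is birationally trivial as a $\mathbb{P}^1$-bundle, giving $\mathcal{X} \sim \mathcal{P}^I \times \mathbb{P}^1$ and hence the rationality of $\mathcal{X}$. The geometric input is that $\mathbb{V}$ is the cone over the Veronese $\mathbb{V}_2$ with vertex $B$, and that $B$ is $\frak S_6$-fixed, so the projection from $B$ realizes $\mathbb{V} \dasharrow \mathbb{V}_2$ as an $\frak S_6$-equivariant $\mathbb{P}^1$-fibration whose fibers are the Igusa pencils and whose GIT quotient is precisely $f$. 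To trivialize this fibration $\frak S_6$-equivariantly, I would produce an $\frak S_6$-invariant rational function $\tau$ on $\mathbb{V}$ which is non-constant on a generic Igusa pencil.

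The construction uses the classical fact, recalled in the Introduction, that the space $V^{\frak S_6}$ of $\frak S_6$-invariants in $V := H^0(\mathcal{O}_{\mathbb{P}^4}(4))$ is two-dimensional, spanned by $s_4$ and $s_2^2$. Pick two independent $\frak S_6$-invariant linear forms $\ell_1, \ell_2 \in V^*$ and set $\tau := \ell_1/\ell_2$, an $\frak S_6$-invariant rational function on $\mathbb{P}^{69}$. On a generic pencil $P = \{[\lambda a^2 + \mu b]\}$ the restriction is
$$
\tau|_P([\lambda:\mu]) = \frac{\lambda\,\ell_1(a^2) + \mu\,\ell_1(b)}{\lambda\,\ell_2(a^2) + \mu\,\ell_2(b)},
$$
which is non-constant in $[\lambda:\mu]$ precisely when $\ell_1(a^2)/\ell_2(a^2) \neq \ell_1(b)/\ell_2(b)$. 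Assuming this holds for generic $a$, the restriction $\tau|_{\mathbb{V}}$ is transcendental over the pulled-back field $k(\mathbb{V}_2)$, and \textit{a fortiori} over $k(\mathbb{V}_2)^{\frak S_6}$; a transcendence-degree count then gives $k(\mathcal{X}) = k(\mathbb{V})^{\frak S_6} = k(\mathcal{P}^I)(\tau)$, so that $\mathcal{X}$ is birational to $\mathcal{P}^I \times \mathbb{A}^1$.

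The main obstacle is this non-constancy check, i.e.~the verification that the projections of $a^2$ and of $b$ to the two-dimensional subspace $V^{\frak S_6}$ are non-proportional for generic $a$. Since $b \in V^{\frak S_6}$ is a fixed non-zero vector while $a \mapsto \mathrm{pr}_{V^{\frak S_6}}(a^2)$ is a non-trivial $\frak S_6$-equivariant homogeneous map of degree $2$ from $H^0(\mathcal{O}_{\mathbb{P}^4}(2))$ to the $2$-dimensional space $V^{\frak S_6}$, the proportionality condition cuts out a proper closed subset of $H^0(\mathcal{O}_{\mathbb{P}^4}(2))$ and the required non-constancy holds generically; this is a finite representation-theoretic verification. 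With this in hand, combining the resulting birationality $\mathcal{X} \sim \mathcal{P}^I \times \mathbb{A}^1$ with Theorem \ref{RATIGUSA} yields the rationality of $\mathcal{X}$.
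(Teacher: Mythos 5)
Your proposal is correct in substance and ends at the same reduction as the paper --- $\mathcal X$ is birational to $\mathcal P^I \times \mathbb P^1$, so everything rests on the rationality of $\mathcal P^I$ from Theorem (\ref{RATIGUSA}) --- but you trivialize the $\mathbb P^1$-fibration $f$ of (\ref{FIBRATION}) by a genuinely different device. The paper blows up the vertex $B$ of the cone $\mathbb V$, observes that the exceptional divisor of $\widetilde{\mathbb V} \to \mathbb V$ is an $\frak S_6$-equivariant section of the induced $\mathbb P^1$-bundle $\widetilde{\mathbb V} \to \mathbb V_2$, and lets this section descend through the GIT quotients to a rational section of $f$; a fibration in rational curves with a rational section over a rational base is birationally trivial. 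You instead exhibit an $\frak S_6$-invariant rational coordinate $\tau = \ell_1/\ell_2$ restricting to a M\"obius (degree one) map on the generic Igusa pencil. The verification you flag as the main obstacle is in fact immediate: take $a = s_2$, so that $a^2 = s_2^2$ lies in $V^{\frak S_6} = \langle s_4, s_2^2 \rangle$ and is visibly not proportional to $b = s_4 - \frac14 s_2^2$; since proportionality is a closed condition on $a$, non-proportionality holds for generic $a$. Two points to tighten: a bare transcendence-degree count only shows that $k(\mathcal X)$ is \emph{algebraic} over $k(\mathcal P^I)(\tau)$, so you must use that $\tau$ has degree one on the generic pencil (which your displayed formula already gives) to get $k(\mathbb V) = k(\mathbb V_2)(\tau)$, and then the standard fact that $K(t)^G = K^G(t)$ when $G$ fixes $t$ to conclude $k(\mathbb V)^{\frak S_6} = k(\mathbb V_2)^{\frak S_6}(\tau)$. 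As for what each route buys: the paper's section is coordinate-free and needs no knowledge of the invariant subspace of quartics, while yours produces an explicit trivializing function and avoids discussing the blow-up and the descent of the section through the quotient.
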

\begin{proof} Let $f' := f \circ \sigma$, where $\sigma:  \mathcal D_6  \to  \mathcal X $ is a birational morphism and $ \mathcal D_6 $ is smooth. Then the general fibre of $f'$  is $\mathbb P^1$. Therefore it suffices to show that $f'$  admits a rational section $s: \mathcal P^I \dasharrow  \mathcal D_6 $. This implies that
$ \mathcal X $ is birational to $\mathcal P^I \times \mathbb P^1$ and the statement follows because $\mathcal P^I$ is rational. To construct $s$  we use the exceptional divisor of the cone $\mathbb V$ blown up in its vertex $\mathsf v$. Let $\tilde \sigma: \widetilde{\mathbb V} \to \mathbb V$ be such a blow up and $\tilde E$ the exceptional divisor. Then the projection of $\mathbb V$,
from $\mathsf v$ onto $\mathbb V_2$,  lifts to a $\mathbb P^1$-bundle $\tilde p: \widetilde {\mathbb V} \to \mathbb V_2$ such that $\tilde E$ is the image of the obvious biregular section $\tilde s: \mathbb V_2 \to \widetilde{\mathbb V}$. The action of $\frak S_6$ on $\mathbb V$ lifts to $\widetilde{ \mathbb V}$ and coincides on $\tilde E$ with the standard action of $\frak S_6$ on $\mathbb V_2$. Passing to the corresponding GIT quotients, it is clear that $\tilde s$ is the pull-back of a section $s$ of $f'$.
 \end{proof}

     \section{revisiting the Prym map}
  In the next section we prove our conclusive result, stated in section 1 as theorem D,  relating the Prym map $\frak  p: \mathcal R_6 \to \mathcal A_5$ and the period map $\frak j:  \mathcal X  \to \mathcal A_5$ for the family of $E_6$-quartic threefolds. To this purpose we review in this section the beautiful picture of the Prym map $\frak p$ and Donagi's tetragonal construction, \cite{Do} and \cite{DS}. The map $\frak p$ is in fact behind most results and geometric constructions we have seen so far.       \par {  Let us recall that $\frak p: \mathcal R_6 \to \mathcal A_5$ associates to the moduli point of a smooth Prym curve $(C,\eta)$ of genus $6$ the moduli point of its Prym variety $P(C,\eta)$. This is a principally polarized abelian variety of dimension $5$. It is constructed from $(C,\eta)$ via the \'etale double covering $\pi: \tilde C \to C$, defined by the nontrivial $2$-torsion line bundle $\eta \in \Pic^0 C$. Actually $P(C,\eta)$ is the connected component of zero of the so called norm map 
$$ 
Nm: \Pic^0 \tilde C \to \Pic^0 C,
$$
sending $\mathcal O_{\tilde C}(d)$ to $\mathcal O_C(\pi_*d)$. It admits a natural principal polarization. The exceptional beauty of $\frak p$ is due to its relation to the exceptional Lie algebra $\mathsf E_6$ and to cubic surfaces.
As shown by Donagi and Smith $P$, has degree $27$, \cite{DS}. Of course this is also the degree of the forgetful map $f: \tilde {\mathcal C} \to \mathcal C$,  where $\widetilde {\mathcal C}$  is the universal Fano variety of lines  over the family $\mathcal C$ of smooth cubic surfaces in $\mathbb P^3$. The mentioned theorem of Donagi shows that the monodromy groups of $\frak p$ and of $f$ are isomorphic, \cite{Do} theorem 4.2. The monodromy group is isomorphic to the Weyl group $W(\mathsf E_6)$ of $\mathsf E_6$ and acts on the fibre of $f$ as the group preserving the incidence relation of the $27$ lines of a cubic surface.}  Therefore we can view a general fibre $\mathbb F$ of $\frak p$ as the configuration of these lines. We proceed keeping this in mind.  \par From Donagi's construction it follows that two distinct elements of $\mathbb F$ are incident lines iff they are directly associated by such a construction as follows, see \cite{Do} 2.5, 4.1.  Let $(C, \eta)$ be a general genus $6$ Prym curve and $l \in \mathbb F \subset \mathcal R_6$ its moduli point. Then
$C$ has exactly $5$ line bundles $L$ of degree $4$ and with $h^0(L) = 2$, forming the Brill-Noether locus
$$
W^1_4(C) := \lbrace L \in \Pic^4(C) \ \vert \ h^0(L) \geq 2 \rbrace.
$$
From a triple $(C, \eta, L)$ one constructs as in \cite{Do} 2.5 two new  triples $(C', \eta', L')$ and $(C'', \eta'', L'')$. They define two points $l', l'' \in \mathbb F$ so that $l, l', l''$ are coplanar lines of $\mathbb F$. One says that $l', l'', l$ form a \it triality. \rm and also that $l'$, $l''$ are \it directly associated \rm to $l$.
After labeling by the subscript $i = 1 ... 5$, we obtain, from the five triples $(C, \eta, L_i)$, exactly eleven elements of $\mathbb F$: $l, l'_i, l''_i$. These are the lines of $\mathbb F$ incident to $l$. Let us fix the notation 
\begin{equation}
\mathbb F^+ = \lbrace l'_i, l''_i \ i = 1 ... 5 \rbrace \ , \ \mathbb F^- = \lbrace n_j, j = 1 ... 16 \rbrace
\end{equation}
for the set of lines respectively intersecting or non intersecting $l$. Then $\mathbb F$ decomposes as 
\begin{equation}
\mathbb F = \lbrace l \rbrace \cup \mathbb F^+ \cup \mathbb F^-.
\end{equation}
 
Now let us also recall the bijection induced by Serre duality, namely
$$
W^1_4(C) \longleftrightarrow W^2_6(C).
$$
This is sending $L \in W^1_4(C)$ to $M := \omega_C \otimes L^{-1}$, where $M$ belongs to $W^2_6(C)$  and $$ W^2_6(C):= \lbrace M \in \Pic^6 C \ \vert \ h^0(M) \geq 3 \rbrace. $$The latter set consists of five line bundles, defining five sextic models $C' \subset \mathbb P^2$ of $C$ modulo $\Aut \mathbb P^2$. This prompts us to to give a new look at  the fibre $\mathbb F$ of $P$ in terms of Prym plane sextics. Therefore let $(C', \eta')$ be a Prym plane sextic of genus $6$ so that $\nu: C \to C'$ is the normalization and $\eta \cong \nu^* \eta'$. As already remarked in (\ref{SEXTIC}) we have the exact sequence
$$
0 \to{  \mathbb Z^4_2}  \to \Pic^0 C'_2 \stackrel {\nu^*}\to \Pic^0_2 C \to 0
$$
of $2$-torsion groups and $\eta' \in {\nu^*}^{-1}(\eta)$. Moreover the embedding $C' \subset \mathbb P^2$ is determined by the line bundle $M \cong \nu^* \mathcal O_{\mathbb P^2}(1) \in W^2_6(C)$ such that $M \cong \omega_C \otimes L^{-1}$. Finally let
\begin{equation} f: \mathcal R^{ps} \to \mathcal R_6,\end{equation}
be the rational map induced by the assignment s $(C', \eta') \to (C, \eta, M) \to (C, \eta)$. Since $\vert W^2_6(C) \vert = 5$ and $\vert {\nu^*}^{-1}(\eta) \vert = 16$ one can deduce as in \cite{FV} p.524
the next property.
 \begin{proposition} The natural map $f: \mathcal R^{ps} \to \mathcal R_6$ has degree $80$. \end{proposition}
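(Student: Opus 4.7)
The plan is to count the set-theoretic fibre of $f$ over a general point $(C,\eta) \in \mathcal R_6$ and show it has cardinality $80$, which by irreducibility and normality of the two moduli spaces computes the degree.

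First I would unpack what a point in the fibre $f^{-1}(C,\eta)$ consists of. Reversing the construction above the proposition: a $4$-nodal Prym plane sextic $(C',\eta')$ lying over $(C,\eta)$ is determined by (i) the embedding data $\nu:C \to C' \subset \mathbb P^2$, equivalently the line bundle $M := \nu^*\mathcal O_{\mathbb P^2}(1) \in W^2_6(C)$ (the sextic $C'$ being recovered from $(C,M)$ up to the action of $\Aut\mathbb P^2$ which is absorbed in the moduli), together with (ii) a $2$-torsion lift $\eta' \in \Pic_2 C'$ with $\nu^*\eta' = \eta$. Thus counting the fibre reduces to counting the pairs $(M,\eta')$.

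Next I would invoke the two cardinalities already recalled in the paper. For a general curve $C$ of genus $6$, Brill--Noether theory gives $|W^2_6(C)| = 5$: by Serre duality $M \mapsto \omega_C\otimes M^{-1}$ this is in bijection with $W^1_4(C)$, whose expected dimension $\rho(6,1,4) = 0$ is attained and whose class (by the Castelnuovo formula) is $5$. For each such $M$, the normalization $\nu:C\to C'$ is fixed, so the exact sequence of $2$-torsion groups
\begin{equation*}
0 \to \mathbb Z_2^4 \to \Pic_2 C' \xrightarrow{\ \nu^*\ } \Pic_2 C \to 0
\end{equation*}
recalled in (\ref{SEXTIC}) shows that $|{\nu^*}^{-1}(\eta)| = 2^4 = 16$. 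Multiplying, the fibre over a generic $(C,\eta)$ has cardinality $5\cdot 16 = 80$.

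The only genuine verification to make is that the $80$ pairs $(M,\eta')$ really yield $80$ \emph{distinct} isomorphism classes in $\mathcal R^{ps}_6$, i.e.\ that no two of them are identified by an automorphism of $(C,\eta)$ or by a coincidence of plane models. For $(C,\eta)$ general one has $\Aut(C,\eta) = 1$, and the five line bundles in $W^2_6(C)$ are pairwise non-equivalent under $\Aut C$, so the five plane models $C'_M \subset \mathbb P^2$ are mutually non-isomorphic as abstract nodal curves; consequently pairs with different $M$ land in different fibres of the forgetful map $\mathcal R^{ps}_6 \to \mathcal M^{ps}$ to moduli of $4$-nodal plane sextics. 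For fixed $M$, the $16$ lifts $\eta'$ are an $\mathbb Z_2^4$-torsor on which a trivial automorphism group of $(C',\eta)$-polarized data acts trivially, so they remain distinct. This is the only point requiring care; everything else is a count. Once this is in hand the degree equals $5\cdot 16 = 80$ as asserted.
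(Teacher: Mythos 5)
Your proof is correct and follows the same route as the paper, which simply multiplies $\vert W^2_6(C)\vert = 5$ by $\vert{\nu^*}^{-1}(\eta)\vert = 16$ (citing [FV] p.~524 for the details). Your added verification that the $80$ pairs $(M,\eta')$ give distinct points of $\mathcal R^{ps}_6$ is a reasonable fleshing-out of what the paper leaves implicit.
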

 To continue let us fix the moduli point $l \in \mathbb F$ of $(C, \eta)$ and recall from \cite{Do} that  the set $\mathbb F^+$, of ten incident lines to $l$ distinct from  $l$, is recovered applying just once the tetragonal constructions to the five line bundles of $W^1_4(C)$. In particular, fixing an element $[C,\eta]$ in the fibre $\mathbb F$ of the Prym map $P$, one has the decomposition
\begin{equation}
\mathbb F = \lbrace [C, \eta] \rbrace \  \cup \ \lbrace {  [C^+_{ik}, \eta^+_{ik}], \ i = 1 ... 5 \ , \ k = 1,2} \rbrace \ \cup \ \lbrace [C^-_j, \eta^-_j], \ j = 1 ... 16 \rbrace = \lbrace l \rbrace \ \cup \ \mathbb F^+ \ \cup \ \mathbb F^-.
\end{equation}
Here the elements labeled by + are obtained from $(C, \eta)$ applying once the tetragonal construction to each {   $L_i \in W^1_4(C)$, $i = 1 \dots 5$.  The label $^{ -}$ corresponds} to the elements of
the set $\mathbb F^-$ of lines disjoint  from $l$. Starting from $(C, \eta)$ these are obtained after a sequence of two tetragonal constructions applied to the elements of $\mathbb F$. \par
Notice that the number $16$ does not appear by chance when counting the number of Schl\"afli double sixs containg as an element the line $l$. Indeed there exist $36$ double sixs and
$36 \times 12 = 27 \times 16$. Actually a natural bijection exists between $\mathbb F^-$ and the set of double sixes containing $l$ and $n$ as elements not in the same six. The bijection 
is constructed as follows. Let $n \in \mathbb F^-$ then a standard exercise on the configuration $\mathbb F$ of $27$ lines shows that exactly five lines of $n_1 ... n_5 \in \mathbb F^-$  
satisfy the following conditions:
$$
\vert n_i \cap n_j \vert = 0 \ , \ \vert l \cap n_k \vert = 0 \ , \ \vert n \cap n_k \vert = 1,
$$
for $1 \leq i \neq j \leq 5$ and $1 \leq k \leq 5$. Since $\vert l \cap n \vert = 0$, then $n$ defines the six of disjoint lines $\lbrace l \ n_1 ... \ n_5 \rbrace$ and hence a unique double six
\begin{equation}
\lbrace l \ n_1 ... n_5 \rbrace \ , \ \lbrace n \ m_1 ... m_5 \rbrace.
\end{equation}
We do not expand further this matter but for reconsidering diagram (\ref{MAINDIAGRAMMA}):  
$$
\begin{tikzcd}[column sep=9pc]
  {  \widetilde{\mathcal R}^{cb}_6  } \arrow{d}{\tilde d}   \arrow{r}{\alpha} &{\mathcal R^{ps}_6} \arrow{r} {p}& {\mathcal R_6}   \arrow{d}{\frak p} \\
  {\mathcal R^{cb}_6}\arrow{rd}{d}  \arrow{rru}{n} \arrow{r}{u := q \circ d}  &
  { \mathcal X } \arrow{r}{\frak j} &
  {\mathcal A_5} \\
  & {\widetilde{ \mathcal X }} \arrow{u}{q} \\
\end{tikzcd}
$$
Let us recall what are the maps in the diagram:  $\alpha$ is birational and was defined in theorem ({\ref{BIRATIONAL}). The inclusions $\Aut_{\sigma} \subset \Aut S \subset \Aut B$ correspond to the inclusions $\frak S_4 \subset \frak S_5 \subset \frak S_6$. This defines a sequence of maps of GIT quotients $  \widetilde{\mathcal R}^{cb}_6  \stackrel {\tilde d} \to \mathcal R^{cb}_6 \stackrel{d} \to  \mathcal X $. We have $\deg \tilde d = 5$, $\deg d = 6$ and $\deg u = 12$, since $q$ is the degree two map defined in (\ref{TRIANGLEDIAGRAM}).
The assignment $(C,\eta,s) \to (C,\eta)$ defines the diagonal map $n$ and $p$ is defined  by $(C', \eta') \to (C,\eta)$, where $\eta \cong \nu^*\eta'$ and $\nu: C \to C'$ is the normalization. By \cite{FV} one has $\deg p = 80$. $\frak j$ is the \it period map \rm as in (\ref{JX}) and $\deg \frak p = 27$. The commutativity of the diagram easily follows  from the given descriptions of its maps, whose known degrees are sufficient to determine the other ones. In particular  we obtain the following theorem, mentioned as B in the Introduction.
\begin{theorem}[Theorem B]The period map $\frak j$ has degree $36$ and $\deg n = 16$. \end{theorem}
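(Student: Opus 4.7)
The plan is a numerical chase in the commutative diagram (\ref{MAINDIAGRAMMA}), using the degrees of the other maps which have already been recorded: $\deg \frak p = 27$ by Donagi--Smith; $\deg p = 80$ from \cite{FV}; $\deg \alpha = 1$ by Theorem~\ref{BIRATIONAL}; $\deg \tilde d = 5$ and $\deg d = 6$ from the subgroup inclusions $\frak S_4 \subset \frak S_5 \subset \frak S_6$; and $\deg q = 2$ from the involution $Q \leftrightarrow \overline Q$, yielding $\deg u = \deg q \cdot \deg d = 12$. Once commutativity of the diagram is firmly in place, both unknown degrees are forced.

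The first step is to verify the two commutativity relations that actually carry content. For the triangle $n \circ \tilde d = p \circ \alpha$, a general representative of $\widetilde{\mathcal R}^{cb}_6$ is a four-tuple $(C,\eta,s,M)$; the map $\tilde d$ forgets $M$ and $n$ then forgets $s$, returning $(C,\eta)$. The map $\alpha$ of Theorem~\ref{BIRATIONAL} sends the same tuple to the Prym plane sextic $(C',\eta')$ with $C' = \sigma_*C$ and $\nu^*\eta' \cong \sigma^*\eta' = \eta$, where $\nu \colon C \to C'$ is the normalization, so $p([C',\eta']) = [C,\eta]$. For the square $\frak p \circ n = \frak j \circ u$, a Steiner triple $(C,\eta,s)$ goes to the $E_6$-quartic $X = t_*Q$ under $u = q \circ d$; by diagram (\ref{JX}) and Clemens--Griffiths (\cite{CG}, Lemma 3.11) one has $J\widetilde X \cong JQ \cong P(C,\eta)$, hence $\frak j(u([C,\eta,s])) = [P(C,\eta)] = \frak p(n([C,\eta,s]))$. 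This identification simultaneously shows that $\frak j$ is well defined on the quotient $\mathcal X$, since $JQ$ is insensitive to the exchange $Q \leftrightarrow \overline Q$.

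With commutativity established, the two degrees drop out as pure arithmetic. From $p \circ \alpha = n \circ \tilde d$,
$$\deg n \cdot \deg \tilde d \;=\; \deg p \cdot \deg \alpha, \qquad \text{i.e.} \qquad 5\,\deg n = 80,$$
so $\deg n = 16$. From $\frak p \circ n = \frak j \circ u$,
$$\deg \frak j \cdot \deg u \;=\; \deg \frak p \cdot \deg n, \qquad \text{i.e.} \qquad 12\,\deg \frak j = 27 \cdot 16 = 432,$$
so $\deg \frak j = 36$.

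The only genuine obstacle is bookkeeping inside the construction of $\alpha$: one must be sure that the resolution (\ref{exact sequence}) defining $\eta'$ satisfies $\nu^*\eta' = \eta$, and that the five choices of contraction $\sigma \colon S \to \mathbb P^2$ correspond bijectively to the five elements of $W^2_6(C)$. Both points are contained in the material of Section~5 together with the transformations of diagram (\ref{Main CD}), so once these identifications are invoked the proof of Theorem~B reduces to the two degree equalities above.
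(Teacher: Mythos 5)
Your proof is correct and follows essentially the same route as the paper: the paper likewise establishes Theorem B by noting that the commutativity of diagram (\ref{MAINDIAGRAMMA}) follows from the explicit descriptions of the maps, and then determining $\deg n = 80/5 = 16$ from the triangle $n \circ \tilde d = p \circ \alpha$ and $\deg \frak j = 27\cdot 16/12 = 36$ from the square $\frak p \circ n = \frak j \circ u$. Your verification of the two commutativity relations simply makes explicit what the paper leaves as "easily follows."
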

   
 \section{The period map $\frak j$ and the universal set of double sixes}
Since theorem $C$ is proven we pass to theorem $D$. Let us recall from our construction in (\ref{MAINDIAGRAMMA}) that the Prym map $\frak p$ defines the variety $ \mathcal D_6 $ of pairs $(\mathsf s, \mathsf a)$ such that  $\mathsf s \subset \frak p^{-1}(\mathsf a)$ is a double six and $\mathsf a \in \mathcal A_5$ is general. Also, we have the variety $\mathcal R'$ of triples $(\mathsf l, \mathsf s, \mathsf a)$ such that $(\mathsf s, \mathsf a) \in  \mathcal D_6 $ and $\mathsf l \in \mathsf s \subset \frak p^{-1}(\mathsf a)$. This gives the commutative diagram in (\ref{DIAGRAMMATWIN}):
$$
\begin{CD}
{\mathcal R'} @>{n'}>> {\mathcal R_6} \\
@VV{d'}V @VV\frak pV \\
{ \mathcal D_6 }@>{\frak j'}>>{\mathcal A_5.} \\
\end{CD}
$$
We defined $\frak j':  \mathcal D_6  \to \mathcal A_5$   as the \it universal set of double sixes \rm of $\frak p$  and $n': \mathcal R' \to \mathcal R_6$ as the \it universal
set of pointed doubles sixes \rm of $\frak p$. On the other hand the diagram
$$
\begin{CD} 
{\mathcal R^{cb}_6} @>{n}>> {\mathcal R_6} \\
@VV{u'}V @VV\frak pV \\
{ \mathcal X }@>{\frak j}>>{\mathcal A_5}, \\
\end{CD}
$$
appears in our main commutative diagram (\ref{MAINDIAGRAMMA}) as a subdiagram. 
 \begin{theorem}[Theorem D]   The period map $\frak j:  \mathcal X  \to \mathcal A_5$ for $E_6$-quartic threefolds and the universal set $\frak j':  \mathcal D_6  \to \mathcal A_5$ of double sixes of $\frak p$ are birational over $\mathcal A_5$.
\end{theorem}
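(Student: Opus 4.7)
The plan is to construct a commutative triangle $\mathcal{X} \stackrel{\phi}{\dashrightarrow} \mathcal{D}_6 \stackrel{\mathfrak{j}'}{\to} \mathcal{A}_5$ factoring $\mathfrak{j}$, and to conclude that $\phi$ is birational by a degree count. For general $X \in \mathcal{X}$ set $\mathsf{a} := \mathfrak{j}([X]) = [J\tilde X] \in \mathcal{A}_5$. By Definition \ref{SIX} the threefold $X$ carries its intrinsic double six of conic bundles $\{u_i\vert Q_i,\bar u_i\vert \bar Q_i\}_{i=1,\dots,6}$ arising from the six splittings $t_i^* X = Q_i + \bar Q_i$ over the six congruences $S_1,\dots,S_6$. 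The discriminant construction of Section 3 associates to these twelve conic bundles the twelve Prym curves $\Sigma(X):=\{(C_i,\eta_i),(\bar C_i,\bar\eta_i)\}$ listed in (\ref{DSX}). Since $J\tilde X \cong P(C_i,\eta_i) \cong P(\bar C_i,\bar\eta_i)$ by (\ref{JX}) and the accompanying isomorphism $JQ \cong P(C,\eta)$, all twelve points lie in the fiber $\mathbb{F}_{\mathsf{a}}$ of $\mathfrak{p}$ over $\mathsf{a}$.

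The heart of the proof is the assertion that $\Sigma(X) \subset \mathbb{F}_{\mathsf{a}}$ is a Schläfli double six. Once this is established, $X \mapsto (\Sigma(X),\mathsf{a})$ is $\Aut B$-equivariant and descends to a rational map $\phi : \mathcal{X} \dashrightarrow \mathcal{D}_6$ with $\mathfrak{j}' \circ \phi = \mathfrak{j}$. The identity $\deg \mathfrak{j} = 36 = \deg \mathfrak{j}'$ from Theorem B then forces $\deg \phi = 1$, so that $\phi$ is birational over $\mathcal{A}_5$, which is the claim of Theorem D. The birationality of $\mathcal{R}_6^{cb}$ and $\mathcal{R}'$ follows immediately by base change along $u'$ and $u$, since $n: \mathcal{R}_6^{cb} \to \mathcal{R}_6$ and $n': \mathcal{R}' \to \mathcal{R}_6$ both have degree $16$ over a common point of $\mathbb{F}_{\mathsf{a}}$.

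To prove the double-six assertion I would use Donagi's tetragonal criterion recalled in Section 7: two distinct elements of $\mathbb{F}_{\mathsf{a}}$ are incident, as lines of the associated cubic, if and only if they are directly related by a tetragonal construction. One then has to check, for generic $X$: (a) $(C_i,\eta_i)$ and $(C_j,\eta_j)$ are non-incident for $i \neq j$, and similarly for the barred pairs; (b) $(C_i,\eta_i)$ and $(\bar C_i,\bar\eta_i)$ are non-incident; (c) $(C_i,\eta_i)$ and $(\bar C_j,\bar\eta_j)$ are incident for $i \neq j$. Assertion (c) is produced by a concrete tetragonal pencil: for $i \neq j$ the congruences $S_i$ and $S_j$ meet along a curve in $\mathbb{G}$, and restricting the canonical pencil $\vert \mathcal{O}_{S_j}(1) \vert$ to $C_i$ through the universal plane correspondence yields a $g^1_4$ on $C_i$ whose tetragonal associate can be identified with the discriminant of $\bar u_j$ on $S_j$. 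The non-incidence assertions (a) and (b) reduce to the observation that no $g^1_4$ on $C_i$ survives the tetragonal construction to land on a conjugate conic bundle over the same $S_i$, which follows from the classical description of $W^1_4(C_i)$ as the trace of the five pencils of conics on $S_i$ acted on by $\Aut S_i \cong \mathfrak{S}_5$.

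The main obstacle is assertion (c). The tetragonal construction is an intrinsic operation on Prym data, while the six families of conic bundles on $X$ are defined extrinsically through the $\mathfrak{S}_6$-orbit $S_1,\dots,S_6$ under $\Aut B$ acting on $\mathbb{G}$. Matching the two requires bookkeeping of how a pair of planes from different congruences through a common point of $\mathbb{P}^4$ gives rise, via the splittings $t_i^*X = Q_i + \bar Q_i$ and $t_j^*X = Q_j + \bar Q_j$, to the four ramification points of the tetragonal map. An alternative route, possibly cleaner, is to first construct the birational lift $\mathcal{R}_6^{cb} \dashrightarrow \mathcal{R}'$ using the fact that both spaces are degree-$16$ covers of $\mathcal{R}_6$ whose fibers are transitively permuted by the monodromy of $\mathfrak{p}$, match their monodromy representations on the single fiber $\mathbb{F}_{\mathsf{a}}$, and then descend to $\phi$ through the quotient map by $\Aut S \subset \Aut B$.
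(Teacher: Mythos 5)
Your skeleton coincides with the paper's: both arguments construct a rational map $\phi:\mathcal X\dasharrow \mathcal D_6$ over $\mathcal A_5$ by sending $X$ to the set of twelve discriminant Prym curves of its double six of conic bundles, observe that $\frak j=\frak j'\circ\phi$, and conclude birationality from $\deg\frak j=\deg\frak j'=36$. The gap is that the entire content of the theorem is concentrated in the claim that these twelve points of $\mathbb F_{\mathsf a}$ form a Schl\"afli double six, and your proposal does not prove it. The route you sketch --- exhibiting, for $i\neq j$, an explicit $g^1_4$ on $C_i$ coming from the intersection of the congruences $S_i$ and $S_j$ and identifying its tetragonal associate with the discriminant of $\overline u_j$ --- is precisely the ``bookkeeping'' you concede is the main obstacle, and as written it does not get off the ground: $\vert\mathcal O_{S_j}(1)\vert$ is a $5$-dimensional system, not a pencil, so it does not directly produce a $g^1_4$; no argument is given that the proposed tetragonal associate is the discriminant of $\overline u_j$ rather than some other line of $\mathbb F_{\mathsf a}$; and your assertions (a) and (b) are justified only by an appeal to the description of $W^1_4(C_i)$ via the five pencils of conics on the single surface $S_i$, which says nothing about incidences between discriminants over two different congruences. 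Note also that your (a)--(c) presuppose that the two sixes are exactly the barred and unbarred families, which is an extra (unneeded and unproven) assumption; and you never check that the twelve points are pairwise distinct, which the paper gets from generic smoothness of $\frak j\circ u$ together with $\frak j\circ u=\frak p\circ n$.

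The paper avoids all of this cross-congruence geometry by a monodromy argument. It works with the fibre $u^{-1}(x)$ of $u:\mathcal R^{cb}_6\to\mathcal X$ and defines, for each pair $(l_i,\overline l_i)$, the incidence counts $b_i=\vert B_i\vert$, $p_i=\vert P_i\vert$, $\overline p_i=\vert\overline P_i\vert$ inside the $12$-element set $\mathsf s$. Skewness of $l_i$ and $\overline l_i$ is proved by contradiction: an incidence would single out one element of $W^1_4(C_i)$, hence a rational section of the degree-$5$ forgetful map $\tilde d:\widetilde{\mathcal R}^{cb}_6\to\mathcal R^{cb}_6$, contradicting irreducibility of $\widetilde{\mathcal R}^{cb}_6$. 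Monodromy forces $p_i=\overline p_i=p$ and $b_i=b$ to be independent of $i$; the lines of $P_i$ incident to $l_i$ define a multisection of $\tilde d$ of degree $p$, whence $p=5$ by irreducibility again; and $b=0$ follows from a counting argument on the incidence correspondence, using irreducibility of $\mathcal R^{cb}_6$. These numbers characterize the unique double six containing $l_i,\overline l_i$ in opposite sixes. If you want to salvage your approach you should either carry out the explicit tetragonal identification (substantial work) or replace steps (a)--(c) by this kind of monodromy-plus-irreducibility counting, which is what the paper actually does; your alternative suggestion of matching the two degree-$16$ covers of $\mathcal R_6$ by their monodromy representations is also only a plan, since you would still need to show the $W(E_6)$-sets $n^{-1}(l)$ and $n'^{-1}(l)$ are isomorphic, which again amounts to identifying the incidence structure of $\mathsf s$.
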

\begin{proof}[\sf Proof of the theorem] We use the main diagram and Donagi's tetragonal construction for the Prym map $P$.  Let $x \in  \mathcal X $ be the moduli point of a general
 $X$. Applying the diagram we consider the fibre of $u: \mathcal R^{cb}_6 \to  \mathcal X $ at $x$. Then $u^{-1}(x)$ consists of twelve Steiner maps modulo $\Aut S$. We can put these in the following matrix:
  \begin{equation}
 \begin{bmatrix} {\small (C_1, \eta_1, s_1) \ \ ... \ ... \ \  ( C_6, \eta_6, s_6)} \\ { \small (\overline C_1, \overline \eta_1 \overline s_1) \ \ ... \ ... \ \ (\overline C_6, \overline \eta_6, \overline s_1)} \end{bmatrix}.
 \end{equation}
Since $u = q \circ d$ we have put in the columns the two elements of a same fibre of $q$. We fix the convention that $Q_i$ and $\overline Q_i$ are the conic bundles defined by the Steiner maps $(C_i, \eta_i, s_i)$ and $(\overline C_i, \overline \eta_i, \overline s_i)$, where $(C_i, \eta_i)$ and $(\overline C_i, \overline \eta_i)$ are their discriminants. 
Let $\mathsf a = j(x)$, the moduli points in $\mathcal R_6$ of these discriminants are denoted as $l_i$ and $\overline l_i$. These are points in $P^{-1}(a)$, therefore we can think of these as points of
the configuration $\mathbb F = P^{-1}(\mathsf a)$ of $27$ lines of a smooth cubic surface.  By generic smoothness we can assume that $J \circ u$ is smooth over $\mathsf a$. Then, since $\frak j \circ u = \frak p \circ n$, it follows that
$n$ is an embedding along $u^*(x)$, in particular $n \vert u^*(x)$ is injective. {\sf Claim}: Let us consider
\begin{equation}
\mathsf s := \lbrace l_1, \overline l_1 \dots l_6 , \overline l_6 \rbrace = n(u^{-1}(x))
\end{equation}
 then $\mathsf s$ is a double six of lines in $\mathbb F$. The claim implies that there exists a rational map $\phi:  \mathcal X  \dasharrow  \mathcal D_6 $, of varieties 
over $\mathcal A_5$, defined as follows. Let $[\mathsf s] \in  \mathcal D_6 $ be the moduli point of $\mathsf s$, we set by definition $\phi(x) = [\mathsf s]$. Since $P(\mathsf s) = \mathsf a$
it follows that the period map $\frak j$ factors through $\phi$ so that $\frak j = \frak j' \circ \phi$. But then $\phi$ is birational because $\deg \frak j = \deg \frak j'$. \end{proof}
\begin{proof}[\sf Proof of the claim]    At first  we prove that $l_i$, $\overline l_i$ are skew lines of the configuration $\mathbb F$. Assume not, then $(\overline C_i, \overline \eta_i)$ is obtained from $(C_i, \eta_i)$ after one step of the tetragonal construction as in \cite{Do} 2.5, see section 8.  This is equivalent to say that exactly one $L_i$ in $W^1_4(C_i)$ exists so that the set realized from $(C_i, \eta_i, L_i)$ after the tetragonal construction is $\lbrace (C_i, \eta_i, L_i), (\overline C_i, \overline \eta_i, \overline L_i), (\overline {\overline C}_i, \overline{\overline \eta}_i, \overline{\overline L}_i) \rbrace$,
where $(C_i, \eta_i), (\overline C_i, \overline \eta_i), (\overline{\overline C}_i, \overline{\overline \eta}_i)$ are the coplanar lines $l_i, \overline l_i, \overline{\overline l}_i$ in the plane spanned by $l_i, \overline l_i$. But this implies that the assignment to $(C_i, \eta_i, s_i)$  of $(C_i, \eta_i,s_i,L_i)$ defines a rational section  of  $u: \widetilde{\mathcal R}^{cb}_6 \to \mathcal R^{cb}_6$, whose degree is $5$: against the irreducibility of $\widetilde{\mathcal R}^{cb}_6$.  To continue let us consider the following sets:  
\begin{equation}
B_i :=  \lbrace m \in \mathsf s \ \vert \ m \cap l_i \neq \emptyset \  , \ m \cap \overline l_i \neq \emptyset \rbrace 
\end{equation}
and, more interestingly for this proof, the two sets
\begin{equation}
P_i :=  \lbrace m \in \mathsf s \ \vert \ m \cap l_i \neq \emptyset \  , \ m \cap \overline l_i = \emptyset \rbrace \ , \  \overline P_i :=  \lbrace m \in \mathsf s \ \vert \ m \cap \overline l_i \neq \emptyset \ ,  \ m \cap l_i = \emptyset \rbrace.
\end{equation}
We denote their cardinalities by $b_i$ and  $p_i$, $\overline p_i$. Now the standard monodromy argument applied to $d: \widetilde{\mathcal R}^{cb}_6 \to \tilde {\mathcal Q}^{E_6}$ implies $p_i = \overline p_i$ and that $p_i$ does not depend on $i$. So we denote it by $p$. For the same reason we denote $b_i$ by $b$. We claim $b = 0$ and at first prove the statement. $b = 0$ implies $p \geq 1$. If not each element of $u^{-1}(x)$ would correspond to a line $m \in \mathsf s$  which is disjoint from any other one but $m$. Hence $\mathbb F$ would contain $12$ disjoint lines: a contradiction. Notice also  that $p \leq 5$ because $5$ is the number of lines of a smooth cubic surface intersecting $l_i$ in exactly one point and not intersecting $\overline l_i$. \par
We show that $p = 5$, which implies  that $\mathsf s$ is a double six. Indeed it is a standard property of $\mathbb F$ that $P_i \cup \overline P_i \cup \lbrace l_i, \overline l_i \rbrace$ is a double six if $p = 5$: precisely it is the unique double six containing $\lbrace l_i, \overline l_i \rbrace$ as a subset intersecting both its  sixes. This implies $$ \mathsf s = P_i \cup \overline P_i \cup \lbrace l_i, \overline l_i \rbrace \ \ i = \dots 6. $$ To prove $p = 5$ we use monodromy again. Let $P_i = \lbrace n_{i, 1} \dots n_{i,p} \rbrace$, then these elements of $\mathsf s$ are lines intersecting $l_i$ and not $\overline l_i$.  Working as above, 
the property that $l_i$ and $n_{i,k}$ are incident uniquely  reconstructs a line bundle $L_{i,k} \in W^1_4(C_i)$ such that the plane spanned by $l_i$ and $n_{i,k}$ is determined by the tetragonal construction applied to $(C_i, \eta_i, L_{i,k})$.  Now recall that the degree $5$ morphism $\tilde d: \widetilde {\mathcal R}^{cb}_6 \to \mathcal R^{cb}_6$ coincides with the forgetful map. Therefore the assignment to $(C_i, \eta_i, s_i)$ of $(C_i, \eta_i, s_i, L_{ik})$ defines a \it multisection \rm of $ \tilde d $. Since $\widetilde {\mathcal R}^{cb}_6$ is irreducible and  $\deg \tilde d = 5$ it follows $p = 5$. \par
Finally we show our claim that $b = 0$. Notice that exactly $5$ lines intersect both $l_i$ and $\overline l_i$, hence $b \leq 5$. Assume $b \geq 1$ and, for $t = 1 \dots 6$, consider the
set $B_t$ of  $5$ elements of $\mathsf s$ incident to $l_t$ and $\overline l_t$. Let $\mathbb I$ be the set of $30$ pairs $(B_t, n_{t,k})$ such that $n_{t,k} \in B_t$, $k = 1 \dots 5$. Consider its 
projection $f: \mathbb I \to  \mathsf s$. Since $\vert \mathsf s \vert = 12$, the number $\ell (m)$ of sets containing a given $m \in\mathsf s$ is not constant. But then distinct values of $\ell(m)$ 
define distinct components of ${\mathcal R}^{cb}_6$: against its irreducibility.  This completes the proof.  \end{proof}

 \end{document}